\numberwithin{equation}{section}
\newtheorem{theorem}{Theorem}[section]
\newtheorem{definition}[theorem]{Definition}
\newtheorem{proposition}[theorem]{Proposition}
\newtheorem{lemma}[theorem]{Lemma}
\newtheorem{corollary}[theorem]{Corollary}
\theoremstyle{definition}
\newtheorem{example}[theorem]{Example}
\newtheorem{examples}[theorem]{Examples}
\newtheorem{remark}[theorem]{Remark}
\newcommand{\cA}{\mbox{${\cal A}$}}
\newcommand{\cD}{\mbox{${\cal D}$}}
\newcommand{\cG}{\mbox{${\cal G}$}}
\newcommand{\cJ}{\mbox{${\cal J}$}}
\newcommand{\cO}{\mbox{${\cal O}$}}
\newcommand{\cU}{\mbox{${\cal U}$}}
\newcommand{\cW}{\mbox{${\cal W}$}}
\newcommand{\KK}{\ensuremath{\mathbb{K}}}
\newcommand{\NN}{\ensuremath{\mathbb{N}}}
\newcommand{\Q}{\ensuremath{\mathbb{Q}}}
\title{\textbf{Center of skew $ PBW $ extensions}}
\author{José Oswaldo Lezama Serrano\\
\texttt{jolezamas@unal.edu.co}
\\Helbert Javier Venegas Ramírez\\
\texttt{hjvenegasr@unal.edu.co}
\\ Seminario de Álgebra Constructiva - SAC$^2$\\ Departamento de Matemáticas\\ Universidad Nacional de
Colombia, Sede Bogotá}
\date{}
\begin{document}
\maketitle
\begin{abstract}
\noindent In this paper we compute the center of many noncommutative algebras that can be
interpreted as skew $PBW$ extensions. We show that, under some natural assumptions on the
parameters that define the extension, either the center is trivial, or, it is of polynomial type.
As an application, we provided new examples of noncommutative algebras that are cancellative.

\bigskip

\noindent \textit{Key words and phrases.}  Center of an algebra, quantum algebras, skew $PBW$
extensions, Zariski cancellation problem.

\bigskip

\noindent 2010 \textit{Mathematics Subject Classification.} Primary: 16U70. Secondary: 16S36,
16S38.
\end{abstract}
\section{Introduction}

The center and the centralizers are natural commutative subalgebras that play an important role in
representation theory and in the general theory of rings and algebras. Recently these subalgebras
were used by Bell and Zhang to investigate the Zariski Cancellation Problem (ZCP) for
noncommutative algebras (\cite{BellZhang}); some other authors have also studied related questions
as the automorphism problem and the Dixmier conjecture applying the description of the center (see
\cite{BackelinE}, \cite{Dixmier2}, \cite{Richard}, \cite{Tang}). The center of many algebras coming
from mathematical physics has been computed in the last years, among the examples are:  The Weyl
algebra, the quantum Weyl algebra of Maltsiniotis, the quantized Weyl algebra $A_n^{Q,\Gamma}(K)$,
the quantum symplectic space $\cO_q(\mathfrak{sp}(K^{2n}))$, some universal enveloping algebras of
Lie algebras, the Jordan plane, the quantum plane (see \cite{Zhangetal3}, \cite{Levandovskyy},
\cite{McConnell}, \cite{Shirikov}, \cite{Tang}, \cite{Hechun}).

In this paper we will compute the center and some central elements for a wide family of algebras
and rings that can be interpreted as skew $ PBW $ extensions. The skew $PBW$ extensions were
introduced by Lezama and Gallego in \cite{LezamaGallego} and represent a generalization of $PBW$
(Poincaré-Birkhoff-Witt) extensions defined by Bell and Goodearl (\cite{Bell}). In addition to the
examples mentioned above, some other prominent algebras that can be covered by the skew $PBW$
extensions are: The algebra of $q$-differential operators, the algebra of shift operators, the
algebra of linear partial $q$-differential operators, skew polynomial rings of derivation type and
Ore extension of derivation type, the additive and multiplicative analogues of the Weyl algebra,
the quantum algebra $U'(so(3,K))$, the $3$-dimensional skew polynomial algebra, the dispin algebra,
the Woronowicz algebra, the $q$-Heisenberg algebra, the Witten's deformation of
$\mathcal{U}(\mathfrak{sl}(2,K)$, algebras diffusion type, some quadratic algebras.

The technique that we use in this paper for computing the center is very simple: We interpret the
algebras as skew $PBW$ extensions (when this apply) and we use the parameters and relations that
define the extension (Definition \ref{definition1.2.1}), then, taking an element $f$ of the center
of the algebra we deduce some formulae from the basic equation $x_if=fx_i$, where $x_i$ is any of
the variables that define the extension. From these formulae we compute either the center, or, some
central key subalgebras when the description of the center is not completely determined.

The paper is organized as follows: In the first section we recall the definition of the skew $PBW$
extensions, and we present some of its basic properties used in the computations involved in the
paper; moreover, we review some examples of skew $ PBW $ extensions whose center is well-known in
the literature. In Section 2 we describe the center of some general subclasses of skew $PBW$
extensions, grouped according to some conditions imposed to the parameters that define the
extension. The computation of the center for the algebras covered in this section probably is new.
In Section 3 we consider the center of other particular examples of skew $PBW$ not included in the
subclasses of the second section. The results here are probably new too. In the last section we
summarize the results of the two previous sections and, the most important thing, we apply these
results to give new examples of noncommutative algebras that are cancellative in the sense of the
Zariski problem. In this paper, $K$ denotes a field with $char(K)=0$.

\subsection{Definition and basic properties of skew $PBW$ extensions}

\begin{definition}[\cite{LezamaGallego}]\label{gpbwextension}
Let $R$ and $A$ be rings. We say that $A$ is a \textit{skew $PBW$ extension of $R$} $($also called
a $\sigma-PBW$ extension of $R$$)$ if the following conditions hold:
\begin{enumerate}
\item[\rm (i)]$R\subseteq A$.
\item[\rm (ii)]There exist finitely many elements $x_1,\dots ,x_n\in A$ such $A$ is a left $R$-free module with basis
\begin{center}
${\rm Mon}(A):= \{x^{\alpha}=x_1^{\alpha_1}\cdots x_n^{\alpha_n}\mid \alpha=(\alpha_1,\dots
,\alpha_n)\in \mathbb{N}^n\}$, with $\mathbb{N}:=\{0,1,2,\dots\}$.
\end{center}
The set $Mon(A)$ is called the set of standard monomials of $A$.
\item[\rm (iii)]For every $1\leq i\leq n$ and $r\in R-\{0\}$ there exists $c_{i,r}\in R-\{0\}$ such that
\begin{equation}\label{sigmadefinicion1}
x_ir-c_{i,r}x_i\in R.
\end{equation}
\item[\rm (iv)]For every $1\leq i,j\leq n$ there exists $c_{i,j}\in R-\{0\}$ such that
\begin{equation}\label{sigmadefinicion2}
x_jx_i-c_{i,j}x_ix_j\in R+Rx_1+\cdots +Rx_n.
\end{equation}
Under these conditions we will write $A:=\sigma(R)\langle x_1,\dots ,x_n\rangle$.
\end{enumerate}
\end{definition}
Associated to a skew $PBW$ extension $A=\sigma(R)\langle x_1,\dots ,x_n\rangle$, there are $n$
injective endomorphisms $\sigma_1,\dots,\sigma_n$ of $R$ and $\sigma_i$-derivations, as the
following proposition shows.
\begin{proposition}\label{sigmadefinition}
Let $A$ be a skew $PBW$ extension of $R$. Then, for every $1\leq i\leq n$, there exist an injective
ring endomorphism $\sigma_i:R\rightarrow R$ and a $\sigma_i$-derivation $\delta_i:R\rightarrow R$
such that
\begin{center}
$x_ir=\sigma_i(r)x_i+\delta_i(r)$,
\end{center}
for each $r\in R$.
\end{proposition}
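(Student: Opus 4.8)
The plan is to extract the two endomorphisms $\sigma_i$ and derivations $\delta_i$ directly from the defining conditions (iii) and (iv) of Definition~\ref{gpbwextension}, then verify the ring-theoretic axioms (additivity, multiplicativity, the twisted Leibniz rule) by exploiting the uniqueness of the representation of elements of $A$ in the basis $\mathrm{Mon}(A)$.

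**Construction.** Fix $i$. By condition (iii), for each $r\in R-\{0\}$ there is $c_{i,r}\in R-\{0\}$ and an element of $R$, call it $d_{i,r}$, with $x_ir = c_{i,r}x_i + d_{i,r}$; setting $c_{i,0}:=0$ and $d_{i,0}:=0$ we get $x_ir = c_{i,r}x_i + d_{i,r}$ for all $r\in R$ (the case $r=0$ being trivial since $x_i\cdot 0 = 0$). Now \emph{define} $\sigma_i(r):=c_{i,r}$ and $\delta_i(r):=d_{i,r}$, so that $x_ir = \sigma_i(r)x_i + \delta_i(r)$ by construction. The point is that $\sigma_i(r)$ and $\delta_i(r)$ are \emph{uniquely determined} by $r$: since $1, x_i$ are part of the left $R$-basis $\mathrm{Mon}(A)$ (taking $\alpha = 0$ and $\alpha = e_i$), the element $x_ir$ has a unique expression as $ax_i + b$ with $a,b\in R$. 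So $\sigma_i,\delta_i: R\to R$ are well-defined functions.

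**Verifying the axioms.** For $r,s\in R$ we compute $x_i(r+s)$ two ways: directly it is $\sigma_i(r+s)x_i + \delta_i(r+s)$, and by distributivity it is $x_ir + x_is = (\sigma_i(r)+\sigma_i(s))x_i + (\delta_i(r)+\delta_i(s))$; comparing coefficients of $x_i$ and of $1$ in the free module gives additivity of both $\sigma_i$ and $\delta_i$. For the multiplicative structure, expand $x_i(rs)$ as $(x_ir)s = (\sigma_i(r)x_i + \delta_i(r))s = \sigma_i(r)(x_is) + \delta_i(r)s = \sigma_i(r)\sigma_i(s)x_i + \sigma_i(r)\delta_i(s) + \delta_i(r)s$; on the other hand $x_i(rs) = \sigma_i(rs)x_i + \delta_i(rs)$. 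Again comparing the $x_i$-coefficient yields $\sigma_i(rs)=\sigma_i(r)\sigma_i(s)$, and comparing the constant term yields the twisted Leibniz rule $\delta_i(rs) = \sigma_i(r)\delta_i(s) + \delta_i(r)s$, i.e.\ $\delta_i$ is a $\sigma_i$-derivation. Taking $r=s=1$ in the first relation forces $\sigma_i(1)=\sigma_i(1)^2$; since $x_i\cdot 1 = x_i$ shows $\sigma_i(1)=1$ and $\delta_i(1)=0$ directly, $\sigma_i$ is a unital ring homomorphism. Finally, injectivity: if $\sigma_i(r)=0$ then by condition (iii) applied contrapositively $r$ cannot be nonzero (condition (iii) guarantees $c_{i,r}=\sigma_i(r)\in R-\{0\}$ whenever $r\in R-\{0\}$), so $r=0$ and $\sigma_i$ is injective.

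**Main obstacle.** The only real subtlety is the well-definedness step — knowing that conditions (iii)–(iv) genuinely pin down $\sigma_i(r)$ and $\delta_i(r)$ rather than merely asserting existence of \emph{some} such constants. This is where the freeness in condition (ii) does the work: because $\{x^\alpha\}$ is a \emph{basis} of $A$ as a left $R$-module, the coordinates of any element with respect to $1=x^0$ and $x_i=x^{e_i}$ are unique, so there is no ambiguity. Everything else is a routine matching of coefficients in the free module $A$; no computation beyond the three displayed expansions is needed.
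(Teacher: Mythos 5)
Your proof is correct and is essentially the argument the paper delegates to the reference (\cite{LezamaGallego}, Proposition 3): define $\sigma_i(r)$ and $\delta_i(r)$ as the coordinates of $x_ir$ with respect to the basis elements $x_i$ and $1$ of the free left $R$-module $A$, note that freeness makes these coordinates unique, and read off additivity, multiplicativity, the twisted Leibniz rule, and injectivity by comparing coefficients. You correctly identify the one genuine subtlety (well-definedness via condition (ii)), so nothing further is needed.
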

\begin{proof}
See \cite{LezamaGallego}, Proposition 3.
\end{proof}

A particular case of skew $PBW$ extension is when all derivations $\delta_i$ are zero. Another
interesting case is when all $\sigma_i$ are bijective and the constants $c_{ij}$ are invertible. We
recall the following definitions.
\begin{definition}\label{sigmapbwderivationtype}
Let $A$ be a skew $PBW$ extension.
\begin{enumerate}
\item[\rm (a)]
$A$ is quasi-commutative if the conditions {\rm(}iii{\rm)} and {\rm(}iv{\rm)} in Definition
\ref{gpbwextension} are replaced by
\begin{enumerate}
\item[\rm (iii')]For every $1\leq i\leq n$ and $r\in R-\{0\}$ there exists $c_{i,r}\in R-\{0\}$ such that
\begin{equation}
x_ir=c_{i,r}x_i.
\end{equation}
\item[\rm (iv')]For every $1\leq i,j\leq n$ there exists $c_{i,j}\in R-\{0\}$ such that
\begin{equation}
x_jx_i=c_{i,j}x_ix_j.
\end{equation}
\end{enumerate}
\item[\rm (b)]$A$ is bijective if $\sigma_i$ is bijective for
every $1\leq i\leq n$ and $c_{i,j}$ is invertible for any $1\leq i,j\leq n$.
\item[\rm (c)]$A$ is constant if for every $1\leq i\leq n$ and $r\in R$,
\begin{equation}
x_ir=rx_i,
\end{equation}
i.e., $\sigma_i=i_R$ and $\delta_i=0$ for every $1\leq i\leq n$.
\end{enumerate}
\end{definition}

If $A=\sigma(R)\langle x_1,\dots,x_n\rangle$ is a skew $PBW$ extension of the ring $R$, then, as
was observed in Proposition \ref{sigmadefinition}, $A$ induces injective endomorphisms
$\sigma_k:R\to R$ and $\sigma_k$-derivations $\delta_k:R\to R$, $1\leq k\leq n$. Moreover, from the
Definition \ref{gpbwextension}, there exists a unique finite set of constants $c_{ij}, d_{ij},
a_{ij}^{(k)}\in R$, $c_{ij}\neq 0$, such that
\begin{equation}\label{equation1.2.1}
x_jx_i=c_{ij}x_ix_j+a_{ij}^{(1)}x_1+\cdots+a_{ij}^{(n)}x_n+d_{ij}, \ \text{for every}\  1\leq
i<j\leq n.
\end{equation}

\begin{definition}\label{definition1.2.1}
Let $A=\sigma(R)\langle x_1,\dots,x_n\rangle$ be a skew $PBW$ extension. $R$, $n$,
$\sigma_k,\delta_k, c_{ij}$, $d_{ij}, a_{ij}^{(k)}$, with $1\leq i<j\leq n$, $1\leq k\leq n$,
defined as before, are called the parameters of $A$.
\end{definition}

\begin{definition}\label{1.1.6}
Let $A$ be a skew $PBW$ extension of $R$.
\begin{enumerate}
\item[\rm (i)]For $\alpha=(\alpha_1,\dots,\alpha_n)\in \mathbb{N}^n$,
$|\alpha|:=\alpha_1+\cdots+\alpha_n$.
\item[\rm (ii)]For $X=x^{\alpha}\in Mon(A)$,
$\exp(X):=\alpha$ and $\deg(X):=|\alpha|$.
\item[\rm (iii)]Let $0\neq f\in A$, $t(f)$ is the finite
set of terms that conform $f$, i.e., if $f=c_1X_1+\cdots +c_tX_t$, with $X_i\in Mon(A)$ and $c_i\in
R-\{0\}$, then $t(f):=\{c_1X_1,\dots,c_tX_t\}$.
\item[\rm (iv)]Let $f$ be as in {\rm(iii)}, then $\deg(f):=\max\{\deg(X_i)\}_{i=1}^t.$
\end{enumerate}
\end{definition}

 The next result is an important property of skew $ PBW $ extensions that will be used later.

 \begin{theorem}[\cite{Oswaldo}]\label{filteredskew}
    Let $ A $ be an arbitrary skew $PBW$ extension of the ring $ R $. Then, $ A $ is a filtered ring with filtration given by
    $$F_{m}:=
    \begin{cases}
    R, & \text{ if } m=0\\
    \{f \in A| deg(f) \leq m\}, & \text{  if } m \geq 1,
    \end{cases}$$
    and the graded ring $ Gr(A) $ is a quasi-commutative skew $ PBW $ extension of $ R
    $. If the parameters that define $A$ are as in Definition \ref{definition1.2.1}, then the
    parameters that define $Gr(A)$ are $R$, $n$, $\sigma_k, c_{ij}$, with $1\leq i<j\leq n$, $1\leq k\leq
    n$. Moreover, if $ A $ is bijective, then $ Gr(A) $ is bijective.
     \end{theorem}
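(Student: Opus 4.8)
The plan is to proceed in four stages: (1) prove the inequality $F_mF_\ell\subseteq F_{m+\ell}$, which turns $A$ into a filtered ring; (2) identify the homogeneous components $F_m/F_{m-1}$ of $Gr(A)$ as free left $R$-modules with the expected monomial basis; (3) obtain the multiplication rules of $Gr(A)$ by reducing those of $A$ modulo lower-degree terms; and (4) deduce the bijectivity statement.

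First I would record the standard ``straightening'' identities for skew $PBW$ extensions: for $r\in R$ and $\theta,\gamma\in\mathbb{N}^n$,
\[
x^{\theta}r=\sigma^{\theta}(r)\,x^{\theta}+p,\qquad x^{\theta}x^{\gamma}=c_{\theta,\gamma}\,x^{\theta+\gamma}+q,
\]
where $\sigma^{\theta}:=\sigma_1^{\theta_1}\cdots\sigma_n^{\theta_n}$, $c_{\theta,\gamma}\in R$, and $p,q$ are either zero or of degree $<|\theta|$, resp.\ $<|\theta|+|\gamma|$. These follow by induction on $|\theta|$ (and $|\gamma|$) from Proposition~\ref{sigmadefinition} and equation~\eqref{equation1.2.1}, since each elementary move either preserves the degree or strictly lowers it; they are already established in \cite{LezamaGallego}. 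In particular $\deg(x^{\theta}x^{\gamma})\le|\theta|+|\gamma|$ and $\deg(x^{\theta}r)\le|\theta|$, so that expanding an arbitrary product $fg$ with $f\in F_m$, $g\in F_\ell$ in the $R$-basis ${\rm Mon}(A)$ gives $\deg(fg)\le m+\ell$, i.e.\ $F_mF_\ell\subseteq F_{m+\ell}$. Together with $1\in F_0=R$, the chain $F_0\subseteq F_1\subseteq\cdots$, and $\bigcup_mF_m=A$ — the last two immediate from Definition~\ref{1.1.6} — this shows that $A$ is a filtered ring, so the associated graded ring $Gr(A):=\bigoplus_{m\ge0}F_m/F_{m-1}$ (with $F_{-1}:=0$ and multiplication induced by that of $A$) is well defined.

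Next I would use Definition~\ref{gpbwextension}(ii) together with the definition of $\deg$ to see that $F_m=\bigoplus_{|\alpha|\le m}Rx^{\alpha}$ as a left $R$-module; hence $F_m/F_{m-1}$ is left $R$-free on the classes $\overline{x^{\alpha}}$ with $|\alpha|=m$. Since each $x_i$ lies in $F_1$ and the product of $Gr(A)$ is induced from that of $A$, these classes are exactly the monomials $\overline{x}^{\alpha}=\overline{x}_1^{\alpha_1}\cdots\overline{x}_n^{\alpha_n}$, where $\overline{x}_i\in F_1/F_0$ is the image of $x_i$ (and $\overline{x}_i\ne0$ because $x_i\notin R$). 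This yields $R=Gr(A)_0\subseteq Gr(A)$ and the freeness of $Gr(A)$ on $\{\overline{x}^{\alpha}\}_{\alpha\in\mathbb{N}^n}$, i.e.\ conditions (i) and (ii) of Definition~\ref{gpbwextension}. For the relations I reduce the identities defining $A$ modulo $F_{k-1}$: from $x_ir=\sigma_i(r)x_i+\delta_i(r)$ with $\delta_i(r)\in F_0$ one gets $\overline{x}_i\,r=\sigma_i(r)\,\overline{x}_i$ in $F_1/F_0$, so condition (iii$'$) holds with $c_{i,r}=\sigma_i(r)$, which means the endomorphism of $Gr(A)$ attached to $\overline{x}_i$ is again $\sigma_i$ (injective, by Proposition~\ref{sigmadefinition}) and the corresponding derivation vanishes; from~\eqref{equation1.2.1}, whose terms $a_{ij}^{(k)}x_k$ and $d_{ij}$ lie in $F_1$, one gets $\overline{x}_j\overline{x}_i=c_{ij}\,\overline{x}_i\overline{x}_j$ in $F_2/F_1$ for $i<j$, while the cases $i=j$ (take $c_{ii}=1$) and $i>j$ follow in the same way from the instance of Definition~\ref{gpbwextension}(iv) attached to the relevant pair. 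Since (iii$'$) and (iv$'$) imply (iii) and (iv), $Gr(A)$ is a quasi-commutative skew $PBW$ extension of $R$, and the computation just performed shows that its parameters in the sense of Definition~\ref{definition1.2.1} are $R$, $n$, $\sigma_k$ and $c_{ij}$ ($1\le i<j\le n$, $1\le k\le n$), all the $\delta_k$, $a_{ij}^{(k)}$ and $d_{ij}$ having been killed in the passage to $Gr(A)$.

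Finally, if $A$ is bijective then every $\sigma_i$ is bijective and every $c_{ij}$ is a unit of $R$; since these are exactly the parameters of $Gr(A)$, $Gr(A)$ is bijective. I expect the only genuinely non-formal point to be the very first step — the degree bounds for $x^{\theta}x^{\gamma}$ and $x^{\theta}r$, obtained by induction from the defining relations (and available in \cite{LezamaGallego}); once those are in place, everything else is routine bookkeeping with the filtration and its associated graded ring.
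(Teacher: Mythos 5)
Your argument is correct and is essentially the standard proof of this result; the paper itself gives no proof, only the citation to \cite{Oswaldo}, where the same strategy is used (the straightening identities $x^{\theta}r=\sigma^{\theta}(r)x^{\theta}+(\text{lower degree})$ and $x^{\theta}x^{\gamma}=c_{\theta,\gamma}x^{\theta+\gamma}+(\text{lower degree})$ give $F_mF_\ell\subseteq F_{m+\ell}$, and reducing the defining relations modulo the filtration kills the $\delta_k$, $a_{ij}^{(k)}$ and $d_{ij}$, leaving the quasi-commutative extension with parameters $\sigma_k$, $c_{ij}$). No gaps.
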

\begin{remark}\label{remark1.7}
Additionally in this paper we will assume that if $A=\sigma(R)\langle x_1,\dots,x_n\rangle$ is a
skew $PBW$ extension, then $R$ is $K$-algebra such that $\sigma_i(k)=k$ and $\delta_i(k)=0$ for
every $k\in K$ and $1\leq i\leq n$. Therefore, $A$ is also a $K$-algebra. In particular, if $R=K$,
then $A$ is constant.
\end{remark}

\subsection{Examples}\label{examples}
In this subsection we review some examples of skew $ PBW $ extensions whose center has been
computed before in the literature. For more details about the precise definition of the next
algebras and its homological properties see \cite{Oswaldo} and \cite{Reyes}. Recall that $K$ is a
field with $char(K)=0$.

\begin{example}[\textbf{$ PBW $ extensions}]\label{PBW}
Any $ PBW $ extension (see \cite{Bell}) is a bijective skew $ PBW $ extension since in this case $
\sigma_{i}=i_{R} $ for each $ 1 \leq i \leq n $, and $ c_{i,j}=1 $ for every $ 1 \leq i,j \leq n $.
Thus, for $ PBW $ extensions we have $ A=i(R)\langle x_{1}, \dots, x_{n}\rangle $. Some examples of
$ PBW $ extensions are the following:
\begin{enumerate}
\item[(a)] The \textit{usual polynomial algebra} $ A=K[t_{1},\dots,t_{n}] $, $ t_it_j=t_jt_i $, so $ Z(A)=K[t_{1},\dots,t_{n}] $.
\item[(b)] The \textit{Weyl algebra} $ A_{n}(K):=K[t_{1},\dots,t_{n}][x_{1};\partial/\partial t_{1}] \cdots [x_{n};\partial / \partial t_{n}] $.
The \textit{extended Weyl algebra} $ B_{n}(K):=K(t_{1},\dots,t_{n})[x_{1};\partial/\partial t_{1}]
\cdots [x_{n};\partial / \partial t_{n}] $, where $ K(t_{1},\dots,t_{n}) $ is the field of
fractions of $ K[t_{1},\dots,t_{n}] $. It is known that $ Z(A_n(K))=K=Z(B_{n}(K))$ (see
\cite{Dixmier}).
\item[(c)] The \textit{universal enveloping algebra} of a finite dimensional Lie algebra $ \mathcal{U}(\mathcal{G}) $.
In this case, $ x_{i}r-rx_{i}=0 $ and $ x_{i}x_{j}-x_{j}x_{i}=[x_{i},x_{j}]\in
\mathcal{G}=K+Kx_{1}+\dots+Kx_{n} $, for any $ r \in K $ and $ 1 \leq i,j\leq n $. The center of
some of these algebras was studied in \cite{Levandovskyy} and \cite{McConnell}:
\begin{itemize}
    \item \textit{The universal enveloping algebra $\mathcal{U}(\mathfrak{sl}(2,K))$ of the Lie algebra $\mathfrak{sl}(2,K)$} is the $ K- $algebra generated
    by the variables $x, y, z$ subject to the relations
    \begin{align*}
    [x, y] = z,\  [x, z]=-2x,\  [y, z] = 2y.
    \end{align*}
    Then $ Z(\mathcal{U}(\mathfrak{sl}(2,K)))=K[4xy+z^2-2z] $.
    \item  $ \mathcal{U}(\mathfrak{so}(3,K)) $ is the $ K- $algebra generated by the variables $x, y, z$ subject to
        the relations
        \begin{align*}
        [x, y] = z,\  [x, z] = -y,\  [y, z] = x.
    \end{align*}
So, $ Z(\mathcal{U}(\mathfrak{so}(3,K)))=K[x^2+y^2+z^2] $.
\item Let $ \mathcal{G} $ be a three-dimensional completely solvable Lie algebra with basis $ x,y,z $ such that
\begin{align*}
 [y,x]=q_1y,\  [z,x]=q_2z, \ [z,y]=0 .
\end{align*}
Then, if either $ q_1,q_2\in K-\Q $, or, $ q_1,q_2\in \Q $ and $ q_1q_2>0 $, then $
Z(U(\mathcal{G}))=K $. If $ q_1,q_2\in \Q $ and $ q_1q_2<0 $ then $ Z(U(\mathcal{G}))\supsetneq K
$.
\end{itemize}
\end{enumerate}
\end{example}
\begin{example}[\textbf{Ore extensions of bijective type}]\label{ore}
Any \textit{skew polynomial ring} $ R[x;\sigma,\delta] $ of bijective type, i.e., with $ \sigma $ bijective, is a bijective skew $ PBW $ extension. In this case we have $ R[x;\sigma,\delta]\cong \sigma(R)\langle x \rangle $. If additionally $ \delta=0 $, then $ R[x;\sigma] $ is quasi-commutative.
More generally, let $ R[x_{1};\sigma_{1},\delta_{1}]\cdots[x_{n};\sigma_{n},\delta_{n}] $ be an \textit{iterated skew polynomial ring of bijective type}, i.e., the following conditions hold:
\begin{itemize}
\item for $ 1\leq i \leq n $, $ \sigma_{i} $ is bijective;
\item for every $ r \in R $ and $ 1 \leq i \leq n $, $ \sigma_{i}(r) \in R$,  $ \delta_{i}(r) \in R$;
\item for $ i < j $, $ \sigma_{j}(x_{i})=cx_{i}+d $, with $ c,d\in R $ and $ c $ has a left inverse;
\item for $ i<j $, $ \delta_{j}(x_{i})\in R+Rx_{1}+\cdots+Rx_{n} $,
\end{itemize}
then $ R[x_{1};\sigma_{1},\delta_{1}]\cdots[x_{n};\sigma_{n},\delta_{n}]  $ is a bijective skew $ PBW $ extension. Under those conditions we have
$$ R[x_{1};\sigma_{1},\delta_{1}]\cdots[x_{n};\sigma_{n},\delta_{n}] \cong \sigma(R)\langle x_{1},\dots,x_{n} \rangle. $$
Some concrete examples of Ore algebras of bijective type are the following:
\begin{enumerate}
\item [\rm (a)]\label{D_qsigma} \textit{The algebra of $q$-differential operators $D_{q,h}[x,y]$}: Let
$q,h\in K, q\neq 0$; consider $K[y][x;\sigma,\delta]$, $\sigma(y):=qy$ and $\delta(y):=h$. By
definition of skew polynomial ring we have $xy=\sigma(y)x+\delta(y)=qyx+h$, and hence $xy-qyx=h$.
Therefore, $D_{q,h}\cong \sigma(K[y])\langle x\rangle$. Then, $ Z(D_{q,h})=K $ if $ q $ is not a
root of unity, and $Z(D_{q,h})= K[x^n,y^n] $ if $ q $ is a primitive $ n $-th root of unity (see
\cite{Zhangetal3}).
\item [\rm (b)]\label{shift} \textit{The algebra of shift operators $S_h$}: Let
$h\in K$. The algebra of shift operators is defined by $S_h:=K[t][x_h;\sigma_h,\delta_h]$, where
$\sigma_h(p(t)):=p(t-h)$, and $\delta_h:=0$. Thus, $S_h\cong \sigma(K[t])\langle x_h\rangle$ and it
is easy to check that $ Z(S_h)=K$ (see also \cite{David}).
\item [\rm (c)]\label{mixed} The \textit{mixed algebra $D_h$}: Let
$h\in K$. The algebra $D_h$ is defined by
$D_h:=K[t][x;i_{K[t]},\frac{d}{dt}][x_h;\sigma_h,\delta_h]$, where $\sigma_h(x):=x$. Then $D_h\cong
\sigma(K[t])\langle x,x_h\rangle$ and $Z(D_h)=K$ (\cite{David}).
\end{enumerate}
\end{example}

\begin{example}[\textbf{Quantum algebras}]\label{Example1.10}
Some important examples of quantum algebras are the following:
\begin{enumerate}
\item [(a)]\textit{Quantum algebra $\mathcal{U}'(\mathfrak{so}(3,K))$}.
It is the $K$-algebra generated by $I_1,I_2,I_3$ subject to relations
\[
I_2I_1-qI_1I_2=-q^{1/2}I_3,\ \ \ I_3I_1-q^{-1}I_1I_3=q^{-1/2}I_2,\ \ \
I_3I_2-qI_2I_3=-q^{1/2}I_1,
\]
where $q\in K-\{0\}$. Then, $\mathcal{U}'(\mathfrak{so}(3,K))\cong \sigma(K)\langle I_1, I_2, I_3
\rangle$ and it is known that
$Z(\cU'(\mathfrak{so}(3,K)))=K[-q^{1/2}(q^{2}-1)I_{1}I_{2}I_{3}+q^{2}I_{1}^{2}+I_{2}^{2}+q^{2}I^{2}_{3}]$
for $ q $ generic, and if $ q $ is a root of unity,
$Z(\cU'(\mathfrak{so}(3,K)))=K[-q^{1/2}(q^{2}-1)I_{1}I_{2}I_{3}+q^{2}I_{1}^{2}+I_{2}^{2}+q^{2}I^{2}_{3},C_n^1,C_n^2,C_n^3]$,
where $ C_n^k=2T_p(I_k(q-q^{-1})/2) $ for $ k=1,2,3 $ and $ T_p(x) $ is a Chebyshev polynomial of
the first kind (\cite{Levandovskyy}).
\item[(b)] \label{qeasl2k} \textit{Quantum enveloping algebra of $\mathfrak{sl}(2,K)$.}$ \quad $ $\mathcal{U}_q(\mathfrak{sl}(2,K))$ is defined
as the algebra generated by $x,y,z,z^{-1}$ with relations
\begin{align*}
zz^{-1}= z^{-1}z=1, \ \ xz= q^{-2}zx, yz= q^{2}zy, \ \ xy-yx= \frac{z-z^{-1}}{q-q^{-1}},
\end{align*}
where $q\neq 1,-1$. The above relations show that $\mathcal{U}_q(\mathfrak{sl}(2,K))=\sigma(K[z,z^{-1}])\langle
    x,y\rangle$. Then for $ q $ generic, its center is generated by the Casimir element $ C_2=(q^2-1)^2xy+qz+q^3z^{-1} $, and
    if $ q^n=1 $ for some $ n\geq 2 $, then $ Z(\mathfrak{sl}(2,K))=K[C_2,x^n,y^n,z^n,z^{-n}] $ (\cite{Levandovskyy}).
\item[(c)] \textit{Quantum symplectic space} $\cO_q(\mathfrak{sp}(K^{2n}))$.  It is the algebra generated by $ x_1,\dots,$ $x_{2n} $, satisfying the following relations:
\begin{align*}
x_jx_i=qx_ix_j,\quad i<j,\quad \ i'\neq j, &&
x_{i'}x_i=q^2x_ix_{i'}+(q^2-1)\sum_{1\leq k<i}q^{i-k}x_kx_{k'},\quad i<i',
\end{align*}
where $ i'=2n-i+1 $ for $ 1\leq i\leq 2n $. From the relations above we have that $
\cO_q(\mathfrak{sp}(K^{2n})) $ is isomorphic to a bijective skew $ PBW $ extension, $$
\cO_q(\mathfrak{sp}(K^{2n}))\cong \sigma(\sigma(\cdots \sigma(\sigma(\KK)\langle
x_1,x_{2n}\rangle)\langle x_2,x_{2n-1}\rangle)\cdots)\langle x_n,x_{n+1}\rangle .$$ In
\cite{Hechun} Zhang studied the irreducible representations of $ \cO_q(\mathfrak{sp}(K^{2n})) $,
where the main step is the computation of the center, thus in Theorem 4.3 therein is proved that if
$ m $ is an odd positive integer and if $ q $ is a primitive $ m- $th root of unity, then the
center is generated by $ x_i^m $ for all $ i=1,2,\dots,2n $. However, by Lemma 4.2 in
\cite{Hechun}, $ x_i^m $ are central elements if $ q $ is a $ m $th root of unity, and it is
possible to show from their relations that if $ q $ is generic then $
Z(\cO_q(\mathfrak{sp}(K^{2n})))=K $.
\item[(d)] \textit{Quantum Weyl algebra of Maltsiniotis} $ A_{n}^{q,\lambda} $. Let
$\textbf{q}=[q_{ij}]$ be a matrix over $K$ such that $q_{ij}q_{ji}=1$ and $q_{ii}=1$ for all $1\le
i,j\le n$. Fix an element $\lambda:=(\lambda_1,\dotsc, \lambda_n)$ of $(K^*)^n$. By definition,
this algebra is generated by $x_i,y_j,\ 1\le i,j\le n$ subject to the following relations: For any
$1\le i < j\le n$, $x_ix_j=\lambda_iq_{ij}x_jx_i$,  $y_iy_j=q_{ij}y_jy_i$, $x_iy_j=q_{ji}y_jx_i$, $
y_ix_j=\lambda_i^{-1}q_{ji}x_jy_i$. And for any $1\le i\le n$, $x_iy_i-\lambda_iy_ix_i=1+\sum_{1\le
j< i}(\lambda_j-1)y_jx_j$. From these relations we have that $A_n^{\textbf{q},\lambda}$ is
isomorphic (see \cite{Oswaldo}) to a bijective skew $PBW$ extension
\begin{center}
    $A_n^{\textbf{q},\lambda}\cong \sigma(\sigma(\cdots \sigma(\sigma(K)\langle
        x_1,y_1\rangle)\langle x_2,y_2\rangle)\cdots)\langle x_{n-1},y_{n-1}\rangle)\langle
    x_n,y_n\rangle$.
\end{center}
If none of $ \lambda_i $ is a root of unity, the quantum Weyl algebra of Maltisiniotis has trivial
center $ K $ (\cite{Tang}, Theorem 1.4).
\item[(e)] \textit{Multiparameter quantized Weyl algebra} $A_n^{Q,\Gamma}(K)$. Let $Q:=[q_1,\dotsc,q_n]$ be a vector in $K^{n}$
with no zero components, and let $\Gamma=[\gamma_{ij}]$ be a multiplicatively antisymmetric
$n\times n$ matrix over $K$. The multiparameter Weyl algebra $A_n^{Q,\Gamma}(K)$ is defined to be
the algebra generated by $K$ and the indeterminates $y_1,\dotsc,y_n,x_1,\dotsc,x_n$ subject to the
relations: $y_iy_j=\gamma_{ij}y_jy_i$,  $1\le i,j\le n$, $x_ix_j=q_i\gamma_{ij}x_jx_i$, $x_iy_j=\gamma_{ji}y_jx_i$, $ 1\le i<j\le n$,
 $x_iy_j=q_j\gamma_{ji}y_jx_i$, $ 1\le j<i\le n$, $x_jy_j=q_jy_jx_j +1 +\sum_{l<j}(q_l-1)y_lx_l$, $ 1\le j\le n$.
From these we have that $A_n^{Q,\Gamma}(K)$ is isomorphic to a bijective skew $PBW$ extension,
\begin{center}
    $A_n^{Q,\Gamma}(K)\cong \sigma(\sigma(\cdots \sigma(\sigma(K)\langle
        x_1,y_1\rangle)\langle x_2,y_2\rangle)\cdots)\langle x_{n-1},y_{n-1}\rangle)\langle
    x_n,y_n\rangle$.
\end{center}
In $ \cite{Tang} $ was studied the multiparameter quantized Weyl algebra of ``symmetric type",
where the relations are:
\begin{align*}
y_iy_j=&\gamma_{ij}y_jy_i, && x_ix_j=\gamma_{ij}x_jx_i, && 1\leq i<j\leq n,\\
x_iy_j=&\gamma_{ji}y_jx_i, && x_iy_j=\gamma_{ji}y_jx_i, && 1\le j<i\le n,\\
x_jy_j=&q_jy_jx_j +1 && 1\leq j\leq n.
\end{align*}
If none of $ q_i $ is a root of unity, then this algebra has a trivial center $ K $ (\cite{Tang},
Corollary 1.1).
\item[(f)] $ n- $\textit{multiparametric skew quantum space} over $ R $.
Let $ R $ be a ring with a fixed matrix of parameters $  \textbf{q} := [q_{ij}]\in M_{n}(R)$, $ n\geq 2 $, such that
$ q_{ii}=1=q_{ij}q_{ji} $ for every $ 1\leq i, j\leq n $, and suppose also that it is given a system $ \sigma_{1}, \dots,\sigma_{n} $ of automorphisms of $ R $.
The quasi-commutative bijective skew $ PBW $ extension $ R_{q,\sigma}[x_{1},\dots,s_{n}] $ defined by
\[x_{j}x_{i}=q_{ij}x_{i}x_{j}, \quad x_{i}r=\sigma_{i}(r)x_{i}, \quad r \in R, \quad 1 \leq i,j \leq n,  \]
is called the $ n- $\textit{multiparametric skew quantum space} over $ R $. When all automorphisms
of the extension are trivial, i.e., $ \sigma_{i}=i_{R} $, $ 1\leq i\leq n $, it is called $ n-
$\textit{multiparametric quantum space} over $ R $. If $ R=K $ is a field, then $
K_{q,\sigma}[x_{1},\dots,x_{n}] $ is called the $ n- $\textit{multiparametric skew quantum space},
and the case particular case when $ n=2 $ it is called \textit{skew quantum plane}; for trivial automorphisms we have
the $ n- $\textit{multiparametric quantum space} and the \textit{quantum plane}\label{key} (see \cite{Oswaldo}). \\
Shirikov studied the center of the quantum plane; if $ q $ is not a root of unity, then its center
is $ K $, if $ q $ is a root of unity of the degree $ m $, $ m\in \NN $, then the center is the
subalgebra generated by $ x^m $, $ y^m $ (\cite{Shirikov}, Theorem 2.2). A more general case was
studied extensively in \cite{Zhangetal3} assuming that $K$ is a commutative domain and the
parameters are roots of the unity, so if $ q_{ij}=exp(2\pi \sqrt{-1}k_{ij}/d_{ij}) $, where $
d_{ij}:=o(q_{ij})<\infty $, $ |k_{ij}|<d_{ij} $, and $ mcd(k_{ij},d_{ij})=1 $, then if $
L_i:=lcm\{d_{ij}\mid j=1,\dots,n\} $ and $ P:=K[x_{1}^{L_{1}},\dots,x_{n}^{L_{n}}] $, the center $
Z(K_{q}[x_{1},\dots,x_{n}])=P $ if and only if $Z( K_{q}[x_{1},\dots,x_{n}]) $ is a polynomial ring
(\cite{Zhangetal3}, Lemma 4.3).
\end{enumerate}
\end{example}
\begin{example}
The \textit{Jordan plane} is the $ K- $algebra defined by $ \mathcal{J}=K\{x,y\}/\langle
yx-xy-x^2\rangle $; it is a skew $ PBW $ extension of $ K[x] $. In fact, $ \cJ=\sigma(K[x])\langle
y\rangle $, with multiplication given by $ yx=xy-x^2 $. Then $ Z(\cJ)=K $ (if $ char(K)=p>0 $, then
$ Z(\cJ) $ is the subalgebra generated by $ x^p,y^p $, Theorem 2.2 in \cite{Shirikov}).
\end{example}

\section{New computations of centers of noncommutative algebras}

In this section we consider some other algebras that can be described as skew $PBW$ extensions and
for which the computation of the center probably has not been done before in the literature. We
classify these algebras according to the parameters that define the extension. Our computations
produce two groups: Algebras with trivial center and algebras with polynomial center.

We start with extensions of two variables. Let $ A $ be the $K$-algebra generated by $ x,y $
subject to relation
\begin{equation}\label{equation2.1}
 yx=q_1xy+q_2x+q_3y+q_4,
\end{equation}
with $ q_i\in K $, $ i=1,2,3,4 $ and $ q_1\neq 0 $. Note that $A$ is a skew $ PBW $ extension of $
K $ with trivial endomorphisms and derivations (see Remark \ref{remark1.7}).
\begin{lemma}
For  $ n\geq 2 $, in $A$ the following rules hold:
    \begin{enumerate}
        \item \begin{align*}
        yx^{n}=&\sum_{j=0}^{n}\binom{n}{j}q_{1}^{n-j}q_{3}^{j}x^{n-j}y+q_{2}\left(\sum_{i=0}^{n-1}q_{1}^{i}\right)x^{n}+
        \left[ q_{4}\left(\sum_{i=0}^{n-1}q_{1}^{i} \right)+q_{2}q_{3}\left(\sum_{i=0}^{n-2}\binom{i+1}{1}q_{1}^{i} \right)\right]x^{n-1}+\\
        &\cdots+ \left[q_{3}^{j-1}q_{4}\left(\sum_{i=0}^{n-j}\binom{i+j-1}{j-1}q_{1}^{i} \right)+q_{2}q_{3}^{j}\left(\sum_{i=0}^{n-j-1}\binom{i+j}{j}q_{1}^{i}
        \right) \right] x^{n-j}.
        \end{align*}
        \item \begin{align*}
        y^{n}x=&\sum_{j=0}^{n}\binom{n}{j}q_{1}^{n-j}q_{2}^{j}xy^{n-j}+q_{3}\left(\sum_{i=0}^{n-1}q_{i}^{i}\right)y^{n}+
        \left[q_{4}\left(\sum_{i=0}^{n-1}q_{1}^{i} \right)+q_{2}q_{3}\left( \sum_{i=0}^{n-2}\binom{n+1}{1}q_{1}^{i}\right)  \right]y^{n-1}+
        \\
        & \cdots+\left[q_{2}^{j-1}q_{4}\left(\sum_{i=0}^{n-j}\binom{i+j-1}{j-1}q_{1}^{i} \right)+q_{2}^{j}q_{3}\left(\sum_{i=0}^{n-j-1}\binom{i+j}{j}q_{1}^{i}
        \right) \right]y^{n-j}.
        \end{align*}
    \end{enumerate}
\end{lemma}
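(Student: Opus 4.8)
The plan is to establish both identities by the same two-step mechanism: reduce the defining relation to a first-order recursion in the exponent, solve that recursion in closed form, and then expand a power of a linear binomial and regroup the result by powers of $x$ (respectively $y$). Since $A$ is a free left $K$-module on $\mathrm{Mon}(A)$, the subalgebras $K[x]$ and $K[y]$ of $A$ are commutative, so the ordinary binomial theorem is available for $(q_1x+q_3)^{k}$ and for $(q_1y+q_2)^{k}$; this, together with associativity and the centrality of the scalars $q_i\in K$, is all the algebraic input needed.

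For the first identity I would right-multiply $yx=q_1xy+q_2x+q_3y+q_4$ by $x^{n-1}$ and use $x\cdot x^{n-1}=x^{n}$ to obtain, for every $n\geq1$,
\[
yx^{n}=(q_1x+q_3)\,yx^{n-1}+q_2x^{n}+q_4x^{n-1},
\]
where $(q_1x+q_3)$ acts by left multiplication. Iterating this down to $yx^{0}=y$ yields the closed form
\[
yx^{n}=(q_1x+q_3)^{n}\,y+\sum_{k=0}^{n-1}(q_1x+q_3)^{k}\bigl(q_2x^{n-k}+q_4x^{n-k-1}\bigr),
\]
valid for all $n\geq0$; the asserted formula is its specialization to $n\geq2$. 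Expanding each power by the binomial theorem, $(q_1x+q_3)^{m}=\sum_{\ell}\binom{m}{\ell}q_1^{m-\ell}q_3^{\ell}x^{m-\ell}$, the first summand becomes $\sum_{\ell=0}^{n}\binom{n}{\ell}q_1^{n-\ell}q_3^{\ell}x^{n-\ell}y$, exactly the leading term of the claim; and in the remaining double sum the term $q_2x^{n-k}$ contributes $\binom{k}{\ell}q_1^{k-\ell}q_3^{\ell}q_2\,x^{n-\ell}$, while $q_4x^{n-k-1}$ contributes $\binom{k}{\ell}q_1^{k-\ell}q_3^{\ell}q_4\,x^{n-\ell-1}$. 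Collecting the coefficient of a fixed power $x^{n-j}$ forces $\ell=j$ in the first family and $\ell=j-1$ in the second; interchanging the order of summation and reindexing $k=j+i$ (respectively $k=j-1+i$) converts the two contributions into
\[
q_2q_3^{j}\sum_{i=0}^{n-j-1}\binom{i+j}{j}q_1^{i}
\qquad\text{and}\qquad
q_3^{j-1}q_4\sum_{i=0}^{n-j}\binom{i+j-1}{j-1}q_1^{i},
\]
which together are the coefficient of $x^{n-j}$ in the statement (for $j=0$ the second sum is empty and one is left with $q_2\sum_{i=0}^{n-1}q_1^{i}$ as the coefficient of $x^{n}$).

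The second identity is obtained by the parallel computation: $y^{n}x=y^{n-1}(yx)$ together with the defining relation gives $y^{n}x=(y^{n-1}x)(q_1y+q_2)+q_3y^{n}+q_4y^{n-1}$ for $n\geq1$, whose iteration down to $y^{0}x=x$ is
\[
y^{n}x=x\,(q_1y+q_2)^{n}+\sum_{k=0}^{n-1}\bigl(q_3y^{n-k}+q_4y^{n-k-1}\bigr)(q_1y+q_2)^{k};
\]
expanding $(q_1y+q_2)^{m}$ binomially and regrouping by powers of $y$ reproduces the asserted formula, now with $q_3$ playing the role that $q_2$ had in the first identity and vice versa. In both parts the only step that really demands care is the index bookkeeping in the regrouping---tracking which pairs $(k,\ell)$ feed into each power of $x$ (or $y$) and then recognizing the resulting one-parameter sums---but nothing deeper than interchanging a double sum and shifting an index is involved, in particular no Vandermonde or hockey-stick identity. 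One should read the displayed formulas in the statement with the obvious conventions for the binomial coefficients and summation ranges, and correct the evident typographical slips in the second identity ($q_i^{i}$ should be $q_1^{i}$, and $\binom{n+1}{1}$ should be $\binom{i+1}{1}$). Alternatively, one may verify the fully expanded formulas directly by induction on $n$, at the cost of invoking Pascal's rule repeatedly inside the inductive step.
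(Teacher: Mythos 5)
Your proof is correct, but it follows a genuinely different route from the paper's. The paper verifies the stated formulas by induction on $n$: it writes $yx^{n}=(b_{n}x^{n}+\cdots+b_{1}x+b_{0})y+(a_{n}x^{n}+\cdots+a_{1}x+a_{0})$ with the asserted coefficients, right-multiplies by $x$, substitutes the defining relation for $yx$, and matches the coefficients of each $x^{n+1-j}y$ and $x^{n+1-j}$ using Pascal's rule $\binom{n}{j}+\binom{n}{j-1}=\binom{n+1}{j}$ together with the corresponding extension of the summation ranges. You instead observe that the defining relation yields the first-order recursion $yx^{n}=(q_{1}x+q_{3})\,yx^{n-1}+q_{2}x^{n}+q_{4}x^{n-1}$, solve it in closed form as $yx^{n}=(q_{1}x+q_{3})^{n}y+\sum_{k=0}^{n-1}(q_{1}x+q_{3})^{k}\bigl(q_{2}x^{n-k}+q_{4}x^{n-k-1}\bigr)$, and then recover the stated coefficients by binomial expansion and a reindexing of the resulting double sum (and symmetrically for $y^{n}x$, with the factors $(q_{1}y+q_{2})^{k}$ acting on the right). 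Your bookkeeping checks out: fixing the power $x^{n-j}$ forces $\ell=j$ in the $q_{2}$-family and $\ell=j-1$ in the $q_{4}$-family, and the substitutions $k=j+i$ and $k=j-1+i$ produce exactly $q_{2}q_{3}^{j}\sum_{i=0}^{n-j-1}\binom{i+j}{j}q_{1}^{i}$ and $q_{3}^{j-1}q_{4}\sum_{i=0}^{n-j}\binom{i+j-1}{j-1}q_{1}^{i}$. Your route has the advantage of producing a compact intermediate closed form and of deriving the coefficients rather than merely verifying a guessed expression, and it needs nothing beyond the binomial theorem in the commutative subalgebra $K[x]$ (respectively $K[y]$); the paper's induction is more mechanical but stays entirely within the format of the final statement. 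Your identification of the two typographical slips in part 2 ($q_{i}^{i}$ for $q_{1}^{i}$ and $\binom{n+1}{1}$ for $\binom{i+1}{1}$) is also correct.
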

\begin{proof}
    We proceed by induction on $ n $. For $ n=2 $ the equalities hold trivially. Then we assume the formulae for $ n $
    and we will prove them for
    $ n+1 $. To simplify, we can write the product as $ yx^n=(b_{n}x^{n}+\dots+b_{1}x+b_{0})y+(a_{n}x^{n}+\dots+a_{1}x+a_{0}) $,
    where $b_{n-j}=\binom{n}{j}q_{1}^{n-j}q_{3}^{j}$ ,
    $a_{n}=q_{2}\sum_{i=0}^{n-1}q_{1}^{i}$ ,
    $a_{n-1}=q_{4}\left(\sum_{i=0}^{n-1}q_{1}^{i} \right)+q_{2}q_{3}\left(\sum_{i=0}^{n-2}\binom{i+1}{1}q_{1}^{i}\right)$, and
    $a_{n-j}= q_{3}^{j-1}q_{4}\left(\sum_{i=0}^{n-j}\binom{i+j-1}{j-1}q_{1}^{i} \right)+q_{2}q_{3}^{j}\left(\sum_{i=0}^{n-j-1}\binom{i+j}{j}q_{1}^{i} \right)$. Then,
    \begin{align*}
    yx^{n+1}=&(yx^{n})x\\
    =&(b_{n}x^{n}+\dots+b_{1}x+b_{0})yx+(a_{n}x^{n+1}+\dots+a_{1}x^{2}+a_{0}x)\\
    =&(b_{n}x^{n}+\dots+b_{1}x+b_{0})(q_{1}xy+q_{2}x+q_{3}y+q_{4})+(a_{n}x^{n+1}+\dots+a_{1}x^{2}+a_{0}x)\\
    =&\left[b_{n}q_{1}x^{n+1}+(b_{n-1}q_{1}+b_{n}q_{3})x^{n}+\dots+(b_{n-j}q_{1}+b_{n+1-j}q_{3})x^{n+1-j}+\dots+b_{0}q_{3} \right]y+ (a_{n}+b_{n}q_{2})\\&x^{n+1}+(a_{n-1}+b_{n-1}q_{2}+b_{n}q_{4})x^{n}+\dots+
    (a_{n-j}+b_{n-j}q_{2}+b_{n+1-j}q_{4})x^{n+1-j}+\dots+b_{0}q_{4}.
    \end{align*}
    Now we compare coefficients: For $ x^{n+1-j}y $, $b_{n-j}q_{1}+b_{n+1-j}q_{3}=\binom{n}{j}q_{1}^{n-j}q_{3}^{j}q_{1}+\binom{n}{j-1}q_{1}^{n-(j-1)}q_{3}^{j-1}q_{3}
    = \left[\binom{n}{j}+\binom{n}{j-1} \right]q_{1}^{n+1-j}q_{3}^{j}
    =\binom{n+1}{j}q_{1}^{n+1-j}q^{j}_{3}$ is the coefficient of $ x^{n+1-j}y  $ in the expansion of $ yx^{n+1} $. For  $ x^{n+1-j} $ we have: $a_{n-j}+b_{n-j}q_{2}+b_{n+1-j}q_{4}=q_{3}^{j-1}q_{4}\left(\sum_{i=0}^{n-j}\binom{i+j-1}{j-1}q_{1}^{i} \right)+q_{2}q_{3}^{j}\left(\sum_{i=0}^{n-j-1}\binom{i+j}{j}q_{1}^{i} \right)
    +\binom{n}{j}q_{1}^{n-j}q_{3}^{j}q_{2}+\binom{n}{j-1}q_{1}^{n-(j-1)}q_{3}^{j-1}q_{4}
    =q_{3}^{j-1}q_{4}\left(\sum_{i=0}^{n+1-j}\binom{i+j-1}{j-1}q_{1}^{i} \right)+q_{2}q_{3}^{j}\left(\sum_{i=0}^{n-j}\binom{i+j}{j}q_{1}^{i} \right)$. Similarly for  $ x^{n+1} $ y $ x^{n} $. So,
    \begin{align*}
    yx^{n+1}=&\sum_{j=0}^{n+1}\binom{n+1}{j}q_{1}^{n+1-j}q_{3}^{j}x^{n+1-j}y+q_{2}(\sum_{i=0}^{n}q_{1}^{i})x^{n+1}+
    \left[ q_{4}\left(\sum_{i=0}^{n}q_{1}^{i} \right)+q_{2}q_{3}\left(\sum_{i=0}^{n-1}\binom{i+1}{1}q_{1}^{i} \right)\right]x^{n}+\\
    &\dots+\left[q_{3}^{j-1}q_{4}\left(\sum_{i=0}^{n+1-j}\binom{i+j-1}{j-1}q_{1}^{i} \right)+q_{2}q_{3}^{j}\left(\sum_{i=0}^{n-j}\binom{i+j}{j}q_{1}^{i} \right) \right] x^{n+1-j}.
    \end{align*}
    This completes the proof of the first equality. The second equality can be proved similarly.
\end{proof}
In particular, if $ q_{2}=q_{3}=0 $, then
\begin{align*}
yx^{n}=&q_{1}^{n}x^{n}y+q_{4}\left(\sum_{i=0}^{n-1}q_{1}^{i} \right)x^{n-1},\\
y^{n}x=&q_{1}^{n}xy^{n}+q_{4}\left(\sum_{i=0}^{n-1}q_{1}^{i} \right)y^{n-1}.
\end{align*}

\begin{theorem}
Let $ A=\sigma(K)\langle x,y\rangle $ be the skew $ PBW $ extension of $K$ defined as in
(\ref{equation2.1}).
    \begin{enumerate}
        \item[\rm (i)] If $ q_{2}=q_{3}=0 $, then \\
        $ Z(A)=
        \begin{cases}
        K &\text{ if } q_{1} \text{ is not a root of unity, or, } q_{1}=1 \text{ and } q_{4}\neq 0;\\
        K[x^{n},y^{n}] & \text{ if } q_{1} \text{ is a root of unity of degree  } n\ \geq 2.
        \end{cases}$\\
        \item[\rm (ii)] If $ q_{2}\neq 0 $, or, $ q_{3}\neq 0 $, then\\
        $ Z(A)=
        \begin{cases}
        K &\text{ if } q_{1} \text{ is not a root of unity, or, } q_{1}=1;\\
        K[f(x),g(y)] & \text{ if } q_{1} \text{ is a root of unity }
        \text{of degree } n \ \  \geq 2,\\ &\text{ with } deg(f)=deg(g)=n.
        \end{cases}$\\
    \end{enumerate}
\end{theorem}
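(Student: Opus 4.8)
The plan is to compare a central element $f$ with its leading form in the associated graded ring, to use the explicit commutation rules of the preceding Lemma to exhibit the central elements when $q_1$ is a root of unity, and to dispose of the exceptional value $q_1=1$ by a change of generators. First I would set up the filtration argument. By Theorem \ref{filteredskew}, $A$ is filtered by the total degree and $Gr(A)$ is the quasi-commutative skew $PBW$ extension of $K$ with the single relation $\bar y\bar x=q_1\bar x\bar y$; that is, $Gr(A)$ is the quantum plane with parameter $q_1$, which reduces to the polynomial ring $K[\bar x,\bar y]$ when $q_1=1$. For any nonzero $g\in A$ with $\deg(g)=d$ one checks that $\deg(xg)=\deg(gx)=\deg(yg)=\deg(gy)=d+1$: multiplying the top-degree component of $g$ on the left by $x$ or $y$ sends its monomials to distinct monomials of degree $d+1$, and on the right merely scales the leading monomial of each by a nonzero scalar, so no top-degree cancellation occurs. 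Hence, for $f\in Z(A)$, the leading form $\bar f$ commutes with $\bar x$ and $\bar y$, i.e.\ $\bar f\in Z(Gr(A))$. By Shirikov's computation of the center of the quantum plane (recalled in Subsection \ref{examples}), $Z(Gr(A))=K$ when $q_1$ is not a root of unity, and $Z(Gr(A))=K[\bar x^{\,n},\bar y^{\,n}]$ when $q_1$ is a primitive $n$-th root of unity with $n\geq 2$. If $q_1$ is not a root of unity, a nonconstant $f\in Z(A)$ would have $\bar f$ a nonzero homogeneous element of positive degree sitting inside $K$, which is impossible; so $Z(A)=K$, and this settles the first alternative of both (i) and (ii).

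Now suppose $q_1$ is a primitive $n$-th root of unity with $n\geq 2$. When $q_2=q_3=0$, the specialization of the Lemma displayed above gives $yx^{n}=q_1^{\,n}x^{n}y+q_4\bigl(\sum_{i=0}^{n-1}q_1^{i}\bigr)x^{n-1}$, where $q_1^{\,n}=1$ and $\sum_{i=0}^{n-1}q_1^{i}=0$, so $yx^{n}=x^{n}y$ and, symmetrically, $y^{n}x=xy^{n}$; since $x^{n}$ and $y^{n}$ commute with each other too, $K[x^{n},y^{n}]\subseteq Z(A)$, and this is a genuine polynomial ring on $x^{n},y^{n}$ because its monomials are the basis elements $x^{an}y^{bn}$. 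When instead $q_2\neq 0$ or $q_3\neq 0$, we have $q_1\neq 1$, and the affine change of variables $x':=x+q_3(q_1-1)^{-1}$, $y':=y+q_2(q_1-1)^{-1}$ transforms the defining relation into $y'x'=q_1x'y'+q_4'$, still has ${\rm Mon}(A)$ a $K$-basis, and presents $A$ as a skew $PBW$ extension $\sigma(K)\langle x',y'\rangle$; the previous case then yields $K[x'^{\,n},y'^{\,n}]\subseteq Z(A)$ with $f(x):=x'^{\,n}$ and $g(y):=y'^{\,n}$ of degree $n$. For the reverse inclusion I would induct on $\deg(f)$: the leading form of $f\in Z(A)$ lies in $K[\bar x^{\,n},\bar y^{\,n}]$, so subtracting from $f$ the element of $K[x^{n},y^{n}]\subseteq Z(A)$ (respectively of $K[f(x),g(y)]$) with the same leading form strictly lowers the degree, and $\deg(f)=0$ is trivial. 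This gives the polynomial alternatives.

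It remains to treat $q_1=1$, where $Gr(A)$ is commutative and the filtration argument is vacuous; this is the genuinely new point. In (i) the relation is $yx=xy+q_4$ with $q_4\neq 0$, so rescaling $y$ makes $A$ the Weyl algebra $A_1(K)$, whose center is $K$ in characteristic $0$ (see Example \ref{PBW}); the leftover sub-case $q_4=0$ gives $A=K[x,y]$. In (ii), the symmetry $x\leftrightarrow y$ --- which turns the parameter vector $(1,q_2,q_3,q_4)$ into $(1,-q_3,-q_2,-q_4)$ --- lets us assume $q_3\neq 0$, and then the shift $y\mapsto y+q_4q_3^{-1}$ kills the constant term, leaving $yx=xy+q_2x+q_3y$. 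Putting $u:=q_2x+q_3y$, a nonzero element since $q_3\neq 0$, one computes $[u,x]=q_3u$ and checks that $\{x^{i}u^{j}\}$ is again a $K$-basis of $A$, so $A$ is the enveloping algebra of the non-abelian two-dimensional Lie algebra; a short direct computation in this basis --- pushing $x$ and $u$ past one another replaces polynomial coefficients in $x$ by their values under the shifts $x\mapsto x+q_3 j$ and under a translation by $q_3$, both of which force the relevant coefficients to be constants in characteristic $0$ --- then gives $Z(A)=K$. I expect this last reduction to be the main obstacle: in every other case $Gr(A)$ does the work, whereas here one first has to recognize the hidden enveloping-algebra structure by choosing the right linear combination of the generators.
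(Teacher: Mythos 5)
Your proof is correct, but it follows a genuinely different route from the paper's. The paper proves (i) by brute force: it writes $f=\sum_i c_i x^{\alpha_{i1}}y^{\alpha_{i2}}$, expands $xf=fx$ and $yf=fy$ using the explicit formulas of the preceding Lemma, and reads off the conditions $c_i=c_iq_1^{\alpha_{i2}}$ etc.\ on the exponents; for (ii) it only remarks that "the proof is similar," and its subsequent Remark even states that the precise description of $f(x)$ and $g(y)$ is in general difficult. You instead get the upper bound $Z(A)\subseteq K\ \text{or}\ K[f,g]$ from the inclusion $Gr(Z(A))\subseteq Z(Gr(A))$ combined with Shirikov's computation of the center of the quantum plane (both of which are already in the paper's toolbox --- the graded argument is exactly what the paper uses in the theorem that follows), and you get the lower bound in the root-of-unity case from the affine substitution $x'=x+q_3(q_1-1)^{-1}$, $y'=y+q_2(q_1-1)^{-1}$, which normalizes the relation to $y'x'=q_1x'y'+q_4'$ and hence produces the central elements explicitly as $f(x)=x'^{\,n}$, $g(y)=y'^{\,n}$. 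This is sharper than what the paper records and also makes part (ii) an honest corollary of part (i). Your separate treatment of $q_1=1$ (Weyl algebra when $q_2=q_3=0$, $q_4\neq0$; enveloping algebra of the nonabelian two-dimensional Lie algebra via $u=q_2x+q_3y$ otherwise) is needed precisely because the graded argument degenerates there, and it is consistent with the paper's own Remark 2.3(2). The only points you should spell out in a final write-up are routine: that the unitriangular substitutions preserve the PBW basis and the skew $PBW$ presentation, and the short computation $u^jp(x)=p(x+jq_3)u^j$ that forces the coefficients to be constant in characteristic zero. What your approach buys is a uniform and fully detailed proof of (ii) together with closed formulas for $f$ and $g$; what the paper's buys is self-containedness, since it never has to invoke Shirikov's theorem or change generators.
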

\begin{proof}
    (i) It is clear that $ K\subseteq Z(A) $;  let $ f=\sum_{i=1}^{t} c_{i}x^{\alpha_{i1}}y^{\alpha_{i2}}\in Z(A) $, with $ c_{i}\in K $. Since $ xf=fx $, then
    $\sum_{i=1}^{t} c_{i}x^{\alpha_{i1}+1}y^{\alpha_{i2}}
    =\sum_{i=1}^{t} c_{i}x^{\alpha_{i1}}y^{\alpha_{i2}}x
    =\sum_{i=1}^{t} c_{i}q_{1}^{\alpha_{i2}}x^{\alpha_{i1}+1}y^{\alpha_{i2}}+\sum_{i=1}^{t}
    c_{i}q_{4}\left(\sum_{j=0}^{\alpha_{i2}-1}q_{1}^{j}\right)x^{\alpha_{i1}}y^{\alpha_{i2}-1}$.

    From this we get that $ c_{i}=c_{i}q_{1}^{\alpha_{i2}} $ and $ \sum_{i=1}^{t} c_{i}q_{4}\left(\sum_{j=0}^{\alpha_{i2}-1}q_{1}^{j}
    \right)x^{\alpha_{i1}}y^{\alpha_{i2}-1}=0 $, so:
    \begin{enumerate}
        \item  If $ q_{1} $ is not a root of unity, then $ \alpha_{i2}=0 $, thus $ f=\sum_{i=1}^{t} c_{i}x^{\alpha_{i1}} $, but $ fy=yf $, then \begin{align*}
        \sum_{i=1}^{t} c_{i}x^{\alpha_{i1}}y=&\sum_{i=1}^{t} c_{i}q_{1}^{\alpha_{i1}}x^{\alpha_{i1}}y+\sum_{i=1}^{t} c_{i}q_{4}
        \left(\sum_{k=0}^{\alpha_{i1}-1}q_{1}^{k} \right)x^{k-1},\\
        \end{align*}
        whence $ c_{i}=c_{i}q_{1}^{\alpha_{i1}} $, and this is valid if $ i=0 $, so $ f=c_{0}\in K $. Therefore, $ Z(A)\subseteq K\subseteq Z(A) $.
        \item If $ q_{1} $ is a root of unity of degree $ n \geq 2$, $ c_{i}=c_{i}q_{1}^{\alpha_{i2}} $ implies that  $ \alpha_{i2}=nl $
        with $ l\in \NN $, also $\sum_{i=0}^{ln-1}q_{1}^{i}=0$, so $ f=\sum_{i=1,l\geq 0}^{t} c_{i,l}x^{i}y^{nl} $. But $ fy=yf $,
        then

        \begin{align*}
        \sum_{i=1}^{t} c_{il}x^{\alpha_{i1}}y^{nl+1}=&\sum_{i=1}^{t} c_{il}q_{1}^{i}x^{\alpha_{i1}}y^{nl+1}+\sum_{i=0}^{t}
        c_{il}q_{4}\left(\sum_{k=0}^{\alpha_{i1}-1}q_{1}^{k} \right)x^{k-1}y^{nl}.\\
        \end{align*}
        Now, $ c_{il}=c_{il}q_{1}^{i} $, thus  $ i=nr $, with $ r\in \mathbb{N} $, in this case $\sum_{i=0}^{nr-1}q_{1}^{i}=0$,
        and hence $ f=\sum c_{rl}(x^{n})^{r}(y^{n})^{l} $, whence $ K[x^{n},y^{n}] \subseteq Z(A)$.
        Finally, since $ x^{n},y^{n} $ are central elements of $ A $, then $ K[x^{n},y^{n}] = Z(A)$.
    \end{enumerate}
(ii) From the basic conditions of commutation $xf=fx, yf=fy$, the proof of this part is similar and
it is possible to find the desired polynomials $f(x)$ and $g(y)$.
\end{proof}

\begin{remark}
    \begin{enumerate}
\item The precise description of polynomials $f(x)$ and $g(y)$ in the previous theorem it is in general
difficult. However, in some particular cases the computation is possible: If $ q_{1} $ is a root of
unity of degree $ n $, $ q_{3}=0 $ and $ q_{2}\neq 0 $, then $ f(x)=x^{n} $, and if $ q_{3}\neq 0 $
and $ q_{2}=0 $, then $ g(y)=y^{n} $.
        \item Some known results can be obtained as particular cases of the previous theorem:
        \begin{enumerate}
        \item If $ q_{1}=q_{4}=1 $ and $ q_{2}=q_{3}=0 $, then $ A $ is the Weyl algebra and $Z(A)=K$.
        \item If $ q_{2}=q_{3}=0 $ then $ A$ is the $ q_{1}$-quantum Weyl
        algebra; if $ q_{1} $ is a primitive $ n- $th roof of unity for some $ n\geq 2 $, then $ Z(A)=K[x^{n},y^{n}]
        $. In \cite{Zhangetal3}, section 2, it is proved that $ A $ is free over $ Z(A)$ of rank $ n^{2} $, with basis
        $ \mathcal{B}=\{x^{i}y^{j}\mid 0\leq i,j\leq n-1 \} $.
        \item If $ L $ is a two-dimensional solvable algebra generated by $ x,y $, with $ [x,y]=x $, then for its universal enveloping algebra we get $Z(\mathcal{U}(L))=K$. This particular
        case was considered in \cite{Makar}.
        \end{enumerate}
    \end{enumerate}
\end{remark}

Now we pass to study the center of extensions with $n\geq 3$ variables. Related to Example
\ref{Example1.10} (f), we have the following easy result.

\begin{lemma}[$ n- $\textit{multiparametric quantum space}]\label{quasi}
    For $ q_{ij}=\frac{l_{ij}}{k_{ij}}\in K- \{0\} $, $ (l_{ij},k_{ij})=1 $, $ 1\leq i,j\leq n $ ($ q_{ii}=1 $), let $ A $ be the
    $K$-algebra defined as the quasi-commutative skew $ PBW $ extension given by $ x_{j}x_{i}=q_{ij}x_{i}x_{j} $, for all $ 1\leq i<j\leq n $.
    If  $ q_{ij} $ is not a root of unity and $ (l_{ij},l_{rs})=(k_{ij},k_{rs})=(l_{ij},k_{rs})=1 $ for $ i,j,r,s\in\{1,2,\dots,n\} $, then $ Z(A)=K $.
\end{lemma}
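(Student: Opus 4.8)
The plan is to pin down a hypothetical central element by extracting arithmetic constraints on the monomials occurring in it; since $K\subseteq Z(A)$ is obvious (the scalars commute with each $x_{i}$ because the extension is constant, by Remark \ref{remark1.7}), it suffices to show $Z(A)\subseteq K$. So let $f=\sum_{\alpha}c_{\alpha}x^{\alpha}\in Z(A)$, written in the standard basis $\mathrm{Mon}(A)$ with $c_{\alpha}\in K$ and $\alpha\in\mathbb{N}^{n}$. First I would record the commutation rule in the quasi-commutative extension $A$: writing $e_{k}\in\mathbb{N}^{n}$ for the $k$-th canonical vector, a straightforward induction on $|\alpha|$ from the defining relations $x_{k}x_{i}=q_{ik}x_{i}x_{k}$ for $i<k$ and $x_{j}x_{k}=q_{kj}x_{k}x_{j}$ for $j>k$ gives
\[
x_{k}\,x^{\alpha}=\Bigl(\prod_{i<k}q_{ik}^{\alpha_{i}}\Bigr)x^{\alpha+e_{k}},\qquad x^{\alpha}\,x_{k}=\Bigl(\prod_{j>k}q_{kj}^{\alpha_{j}}\Bigr)x^{\alpha+e_{k}} .
\]
Since $\mathrm{Mon}(A)$ is a free $K$-basis of $A$ and the map $\alpha\mapsto\alpha+e_{k}$ is injective, matching coefficients on the two sides of $x_{k}f=fx_{k}$ shows that every $\alpha$ in the support of $f$ satisfies the scalar identities
\[
\prod_{i<k}q_{ik}^{\alpha_{i}}=\prod_{j>k}q_{kj}^{\alpha_{j}},\qquad 1\le k\le n .
\]

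Next I would feed in the arithmetic hypotheses on $q_{ij}=l_{ij}/k_{ij}$, working with $p$-adic valuations $v_{p}$ in $\mathbb{Q}$. Taking $k=1$ above, the left-hand product is empty, so $\prod_{j=2}^{n}q_{1j}^{\alpha_{j}}=1$; I claim this forces $\alpha_{2}=\cdots=\alpha_{n}=0$. Indeed, suppose $\alpha_{j_{0}}>0$ for some $j_{0}\ge2$. Since $q_{1j_{0}}$ is not a root of unity, $q_{1j_{0}}\neq\pm1$, so some prime $p$ divides $l_{1j_{0}}$ or $k_{1j_{0}}$, and hence $v_{p}(q_{1j_{0}})\neq0$. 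The coprimality hypotheses, applied to the distinct index pairs $(1,j_{0})$ and $(1,j)$ for $j\neq j_{0}$, give $(l_{1j_{0}},l_{1j})=(k_{1j_{0}},k_{1j})=(l_{1j_{0}},k_{1j})=(l_{1j},k_{1j_{0}})=1$, so $p$ divides neither the numerator nor the denominator of any $q_{1j}$ with $j\neq j_{0}$; thus $v_{p}(q_{1j})=0$ for those $j$, and therefore $v_{p}\bigl(\prod_{j\ge2}q_{1j}^{\alpha_{j}}\bigr)=v_{p}(q_{1j_{0}})\,\alpha_{j_{0}}\neq0$, contradicting that this product equals $1$. Hence $\alpha_{2}=\cdots=\alpha_{n}=0$. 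Substituting this into the identity for $k=2$ (available since $n\ge2$) leaves $q_{12}^{\alpha_{1}}=1$, and as $q_{12}$ is not a root of unity we conclude $\alpha_{1}=0$ as well. So $\alpha=0$, every term of $f$ lies in $K$, and $f\in K$, which gives $Z(A)=K$.

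The commutation formula and the coefficient matching are routine, as $A$ is quasi-commutative and $\mathrm{Mon}(A)$ is a basis. The step that needs care is the $p$-adic bookkeeping: one must split according to whether the chosen prime $p$ lies in the numerator or in the denominator of $q_{1j_{0}}$ and then invoke the relevant one(s) among the three coprimality conditions $(l_{ij},l_{rs})=(k_{ij},k_{rs})=(l_{ij},k_{rs})=1$ to confine $p$ to the single factor $q_{1j_{0}}^{\alpha_{j_{0}}}$; and it is exactly the assumption that $q_{1j_{0}}$ is not a root of unity, i.e. $|l_{1j_{0}}|>1$ or $|k_{1j_{0}}|>1$, that guarantees such a prime exists. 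I would also remark that, more symmetrically, the identity for each fixed $k$ already forces $\alpha_{i}=0$ for all $i\neq k$, so the cases $k=1$ and $k=2$ suffice; both of these uses of the identities rely simultaneously on ``not a root of unity'' (to produce a prime) and on the pairwise coprimality (to isolate it).
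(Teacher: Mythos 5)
Your proof is correct and follows essentially the same route as the paper's: expand a central $f$ in the basis $\mathrm{Mon}(A)$, match coefficients in $x_k f = f x_k$ to get the multiplicative identities $\prod_{i<k}q_{ik}^{\alpha_i}=\prod_{j>k}q_{kj}^{\alpha_j}$, and then use the coprimality and non-root-of-unity hypotheses to force $\alpha=0$. In fact your $p$-adic valuation argument makes explicit the arithmetic step that the paper dispatches with only the phrase ``by hypothesis, then necessarily $\alpha_{ij}=0$,'' so your write-up is, if anything, more complete.
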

\begin{proof}
    In this case we have the rule:
    \begin{center}
    $ x_{k}^{l_{k}}x_{j}^{l_{j}}x_{i}^{l_{i}}=(q_{ij}^{l_{i}l_{j}}q_{ik}^{l_{i}l_{k}}q_{jk}^{l_{j}l_{k}})x_{i}^{l_{i}}x_{j}^{l_{j}}x_{k}^{l_{k}} $.
    \end{center}
    Let $ f=\sum_{i=0}^{l}c_{i}x_{1}^{\alpha_{i1}}\dotsc x_{n}^{\alpha_{in}}\in Z(A) $, with $ \alpha_{ij}\geq 0 $ and $ c_{i}\in K-\{0\} $, for $ i=0,1,\dots,l $, and $ j=1,2,\dots,n $. Since $ x_{1}f=fx_{1} $, $ \dots $, $ x_{n}f=fx_{n} $, then
    \begin{align*}
    \sum_{i=0}^{l}c_{i}x_{1}^{\alpha_{i1}+1}\dotsc x_{n}^{\alpha_{in}}=&\sum_{i=0}^{l}c_{i}q_{1n}^{\alpha_{in}}\dotsc q_{12}^{\alpha_{i2}}x_{1}^{\alpha_{i1}+1}\dotsc x_{n}^{\alpha_{in}} \\
    \vdots\quad=&\quad\vdots\\
    \sum_{i=0}^{l}c_{i}q_{n-1,n}^{\alpha_{i(n-1)}}\dotsc q_{1n}^{\alpha_{i1}}x_{1}^{\alpha_{i1}}\dotsc x_{n}^{\alpha_{in}}=
    &\sum_{i=0}^{l}c_{i}x_{1}^{\alpha_{i1}}\dotsc x_{n}^{\alpha_{in}+1}.
    \end{align*}
        From this we get that
        $ c_{i}=c_{i}q_{1n}^{\alpha_{in}}\dotsc q_{12}^{\alpha_{i2}} $, $\cdots$, $ c_{i}=c_{i}q_{1n}^{\alpha_{i1}}
        \cdots q_{n-1,n}^{\alpha_{i(n-1)}} $,  so $ 1=q_{1n}^{\alpha_{in}}\dotsc q_{12}^{\alpha_{i2}} $,$\dots$, $ 1=q_{1n}^{\alpha_{in}}\dotsc
        q_{12}^{\alpha_{i2}} $  for $ i=0,1,2,\dots,l $. Since $ q_{ij} $ is not a root of unity and by hypothesis, then necessarily $ \alpha_{ij} =0$ for $ i=0,1,\dots,l $,
        and $ j=1,\dots,n $. Thus, $ f=c_{0}\in K $. Therefore, $ Z(A)\subseteq K \subseteq Z(A) $.
\end{proof}
\begin{theorem}
    Let $ A=\sigma(K)\langle x_1,\dots,x_n\rangle $ be a skew $ PBW $ extensions of $ K $ defined by
\begin{center}
$ x_jx_i=q_{ij}x_ix_j+a_{ij}^{(1)}x_1+\cdots +a_{ij}^{(n)}x_n+d_{ij}$,
\end{center}
with $ 1\leq i<j\leq n$, and $ q_{ij}=\frac{l_{ij}}{k_{ij}} $, $ (l_{ij},l_{rs})=(k_{ij},k_{rs})=(l_{ij},k_{rs})=1 $ for $ i,j,r,s\in\{1,2,\dots,n\} $. If $ q_{ij} $ is not a root of unity, then $ Z(A)=K $.
\end{theorem}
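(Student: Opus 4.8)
The plan is to reduce the statement to the quasi-commutative case already settled in Lemma \ref{quasi} by passing to the associated graded ring. By Theorem \ref{filteredskew}, $A$ is a filtered ring with the filtration $\{F_m\}_{m\geq 0}$ described there, $F_0=R=K$, and $Gr(A)$ is a quasi-commutative skew $PBW$ extension of $K$ whose parameters are $K$, $n$, the endomorphisms $\sigma_k$ (which are trivial here, by Remark \ref{remark1.7}, since $R=K$), and the same constants $c_{ij}=q_{ij}$. Hence $Gr(A)$ is precisely the $n$-multiparametric quantum space defined by $x_jx_i=q_{ij}x_ix_j$, $1\leq i<j\leq n$. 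Since $q_{ij}$ is not a root of unity and the coprimality conditions $(l_{ij},l_{rs})=(k_{ij},k_{rs})=(l_{ij},k_{rs})=1$ hold by hypothesis, Lemma \ref{quasi} applies and gives $Z(Gr(A))=K$, i.e.\ the center of $Gr(A)$ is exactly its degree-zero component.

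Next I would run the standard symbol argument. Let $0\neq f\in Z(A)$ and put $m:=\deg(f)$, so that $f\in F_m\setminus F_{m-1}$ (with $F_{-1}:=0$) and the principal symbol $\bar f:=f+F_{m-1}$ is a nonzero homogeneous element of $Gr(A)$ of degree $m$. I claim $\bar f\in Z(Gr(A))$: for any homogeneous $\bar g=g+F_{k-1}$ of degree $k$ one has
\[
\bar f\,\bar g-\bar g\,\bar f=(fg-gf)+F_{m+k-1}=0,
\]
because $f$ is central in $A$; since $Gr(A)$ is generated by its homogeneous elements, $\bar f$ commutes with everything. Combining this with $Z(Gr(A))=K=Gr(A)_0$ forces $m=0$, i.e.\ $f\in F_0=K$. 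Therefore $Z(A)\subseteq K$, and the reverse inclusion is trivial, so $Z(A)=K$.

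The only real content here is Lemma \ref{quasi}, which disposes of the quasi-commutative model; the passage from $A$ to $Gr(A)$ is routine once Theorem \ref{filteredskew} identifies $Gr(A)$ and, crucially, confirms that $Gr(A)$ retains exactly the multiplicative parameters $q_{ij}$, so that the arithmetic hypotheses on the $l_{ij},k_{ij}$ still apply. The point to watch is precisely that the lower-order terms $a_{ij}^{(k)}x_k+d_{ij}$ appearing in the defining relations of $A$ must vanish in the associated graded ring — which is what the second assertion of Theorem \ref{filteredskew} guarantees; without that one would be forced to track a cascade of lower-degree cancellations directly in $A$, which the graded argument neatly avoids. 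If a self-contained computation were preferred, one could instead expand $f=\sum_i c_ix^{\alpha_i}$, apply the relations $x_kf=fx_k$ for each $k$, observe that the top-degree part of the resulting identity is exactly the one analysed in the proof of Lemma \ref{quasi} (so the top-degree monomials of $f$ must be scalars), and then induct downward on $\deg(f)$.
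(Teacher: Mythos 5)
Your proposal is correct and follows essentially the same route as the paper: pass to $Gr(A)$ via Theorem \ref{filteredskew}, identify it as the $n$-multiparametric quantum space with the same parameters $q_{ij}$, apply Lemma \ref{quasi} to get $Z(Gr(A))=K$, and conclude via the inclusion of the symbol of a central element into $Z(Gr(A))$ (the paper phrases this as $Gr(Z(A))\subseteq Z(Gr(A))=K$). Your write-up merely makes the principal-symbol step more explicit than the paper does.
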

\begin{proof}
According to Theorem \ref{filteredskew}, $ Gr(A) $ is a quasi-commutative skew $ PBW $ extension
with relations $ x_jx_i=q_{ij}x_ix_j $, $ 1\leq i<j\leq n$ (since $A$ is a $K$-algebra,
$\sigma_i=i_R$ and $\delta_i=0$, see Remark \ref{remark1.7}). Then, by the previous lemma, $
K=Z(Gr(A))\supseteq Gr(Z(A)) $. Thus $ Gr(Z(A))=K $ and therefore $ Z(A)=K $.
 \end{proof}

The following theorem allows us to give a wide list of algebras which center probably has not been
computed before.
\begin{theorem}\label{operator}
    Let $ A=\sigma(K[x_{1},\dots,x_{n}])\langle y_{1},\dots,y_{n}\rangle $ be the skew $PBW$ extension defined by
    \begin{align}
    y_{i}y_{j}&=y_{j}y_{i}\quad 1\leq i<j\leq n,\\
    y_{j}x_{i}&=x_{i}y_{j}\quad i\neq j,\\
    y_{i}x_{i}&=q_{i}x_{i}y_{i}+d_{i}y_{i}+a_{i}, \quad 1\leq i\leq n. \label{Def2.4}
    \end{align}
     Then,
    \begin{enumerate}
\item $ Z(A)=K $ in any of the following cases:
    \begin{enumerate}
        \item For $1\leq i\leq n$, $ q_{i} $ is not a root of unity.
        \item If for some $i$, $ q_{i}=1 $, then $ d_{i}\neq 0$ or $a_{i}\neq 0$.
    \end{enumerate}
    \item If for every $ 1\leq i\leq n $, $ q_{i} $ is a primitive $ l_{i}- $th root of unity of degree $ l_{i}\geq 2 $,
    then the elements $ y_{i}^{l_{i}} $ are central and $ K\subsetneq Z(A) $.
    \end{enumerate}
\end{theorem}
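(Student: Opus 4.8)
The plan is to use that the only genuinely non-commuting pairs of generators of $A$ are $x_i$ and $y_i$ with the \emph{same} index; consequently $A$ splits as an ordered tensor product of $n$ two-variable pieces, and everything reduces to the two-variable computation already carried out. First I would set, for $1\le i\le n$, $A_i\subseteq A$ to be the $K$-subalgebra generated by $x_i$ and $y_i$. By (\ref{Def2.4}) one has $A_i\cong\sigma(K)\langle x_i,y_i\rangle$, the two-variable skew $PBW$ extension of the shape (\ref{equation2.1}) with parameters $q_1=q_i$, $q_2=0$, $q_3=d_i$, $q_4=a_i$; moreover its $K$-basis $\{x_i^{a}y_i^{b}\}$ is a subfamily of the standard monomial basis of $A$. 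Since each of $x_i,y_i$ commutes with each of $x_j,y_j$ whenever $i\ne j$, the multiplication map
$$\mu\colon A_1\otimes_K\cdots\otimes_K A_n\longrightarrow A,\qquad f_1\otimes\cdots\otimes f_n\longmapsto f_1\cdots f_n,$$
is a homomorphism of $K$-algebras; it is onto, and it carries the tensor product of the bases $\{x_i^{a}y_i^{b}\}$ bijectively onto the standard monomial basis $\{x_1^{a_1}\cdots x_n^{a_n}y_1^{b_1}\cdots y_n^{b_n}\}$ of $A$ (reordering $x_1^{a_1}y_1^{b_1}\cdots x_n^{a_n}y_n^{b_n}$ into that shape only moves a $y_j$ past an $x_k$ with $k\ne j$). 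Hence $\mu$ is an isomorphism, and because $K$ is a field, $Z(A)\cong Z(A_1)\otimes_K\cdots\otimes_K Z(A_n)$.

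For part 1, I would then just read off $Z(A_i)$ from the two-variable theorem above (with $q_2=0$): if $q_i$ is not a root of unity it gives $Z(A_i)=K$, while if $q_i=1$ the hypothesis ``$d_i\ne0$ or $a_i\ne0$'' puts us in case (ii) with $q_1=1$ (when $d_i\ne0$) or in case (i) with $q_1=1,\ q_4\ne0$ (when $d_i=0,\ a_i\ne0$), and again $Z(A_i)=K$. So under (a) or (b) every $Z(A_i)=K$, and the displayed isomorphism gives $Z(A)=K\otimes_K\cdots\otimes_K K=K$.

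For part 2, assume each $q_i$ is a primitive $l_i$-th root of unity with $l_i\ge2$. I would check directly that $y_i^{l_i}$ is central: it commutes with all the $y_j$, and with $x_j$ for $j\ne i$ by (\ref{Def2.4}), while a short induction from $y_ix_i=q_ix_iy_i+d_iy_i+a_i$ (this is the $q_2=0$ instance of the commutation rule established in the Lemma above) yields, inside $A_i$,
$$y_i^{l_i}x_i=q_i^{l_i}\,x_iy_i^{l_i}+d_i\Big(\sum_{k=0}^{l_i-1}q_i^{k}\Big)y_i^{l_i}+a_i\Big(\sum_{k=0}^{l_i-1}q_i^{k}\Big)y_i^{l_i-1}=x_iy_i^{l_i},$$
since $q_i^{l_i}=1$ and $1+q_i+\cdots+q_i^{l_i-1}=0$. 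Thus $y_i^{l_i}\in Z(A)$; being a standard monomial of positive degree it is not a scalar, so $K\subsetneq Z(A)$.

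The step I expect to require the most care is the tensor decomposition in the first paragraph: one must verify that the standard monomials of $A$ are exactly the ordered products of standard monomials of the factors $A_i$, and invoke the (standard, but worth stating) fact that the center of a finite tensor product of algebras over a field is the tensor product of the centers. Everything after that is bookkeeping together with a quotation of the already-proved two-variable results. If one prefers to avoid tensor products altogether, the same conclusions drop out by taking a central $f=\sum c_{\alpha\beta}x^{\alpha}y^{\beta}$ and noting that the relations $x_if=fx_i$ and $y_if=fy_i$ decouple index-by-index into precisely the two-variable computations of that theorem.
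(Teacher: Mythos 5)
Your proof is correct, but it takes a genuinely different route from the paper's. The paper argues directly inside $A$: its Step 1 establishes by induction the commutation formulas for $y_kx_k^m$ and $y_k^mx_k$, and its Step 2 takes a general central element $f=\sum_iP_i(X)y_1^{\alpha_{i1}}\cdots y_n^{\alpha_{in}}$, uses $x_1f=fx_1$ and freeness of $A$ over $K[x_1,\dots,x_n]$ to force $\alpha_{i1}=0$, repeats for $x_2,\dots,x_n$ to get $f\in K[x_1,\dots,x_n]$, and then uses $y_jf=fy_j$ to kill the $x$-exponents. You instead observe that the only non-commuting pairs of generators share an index, package this as an algebra isomorphism $A\cong A_1\otimes_K\cdots\otimes_K A_n$, where $A_i$ is the two-variable extension of type (\ref{equation2.1}) with $q_1=q_i$, $q_2=0$, $q_3=d_i$, $q_4=a_i$, and invoke $Z(A_1\otimes_K\cdots\otimes_K A_n)=Z(A_1)\otimes_K\cdots\otimes_K Z(A_n)$ together with the already-proved two-variable theorem. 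This is shorter and more structural: it explains why the hypotheses decouple index by index, and it gives essentially for free the description of $Z(A)$ in the root-of-unity case recorded in the paper's subsequent remark. The costs are that you must actually carry out the two verifications you flag (both go through: $\mu$ sends the tensor-product basis bijectively onto ${\rm Mon}(A)$ because reordering only moves $y_j$ past $x_k$ with $k\neq j$, and $Z(A\otimes_K B)=Z(A)\otimes_K Z(B)$ over a field is standard), and that you lean on part (ii) of the two-variable theorem, whose proof the paper itself only sketches, whereas the paper's direct argument is self-contained. Two minor points: the commutation of $y_i^{l_i}$ with $x_j$ for $j\neq i$ comes from the second defining relation, not from (\ref{Def2.4}); and in case 1(b) you should state explicitly (the paper leaves this implicit as well) that the hypothesis is being read as saying that every index $i$ has either $q_i$ not a root of unity, or $q_i=1$ with $d_i\neq0$ or $a_i\neq0$, since an index with $q_i$ a primitive root of unity of degree at least $2$ would contribute $Z(A_i)\supsetneq K$ and defeat the conclusion.
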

\begin{proof}
\textit{Step 1}. For $ k=1,\dots,n $, in $A$ the following relations hold for every $m\geq 1$:
\begin{enumerate}
    \item[]
    $y_{k}x_{k}^{m}=\sum_{j=0}^{m}\binom{m}{j}q_{k}^{m-j}d_{k}^{j}x_{k}^{m-j}y_{k}+a_{k}\left(\sum_{i=0}^{m-1}q_{k}^{i}
    \right)x_{k}^{m-1}+\dots+d_{k}^{j-1}a_{k}\left(\sum_{i=0}^{m-j}\binom{i+j-1}{j-1}q_{k}^{i}\right)x_{k}^{m-j}$,
    \item[]
    $y_{k}^{m}x_{k}= q_{k}^{m}x_{k}y_{k}^{m}+d_{k}\left(\sum_{i=0}^{m-1}q_{k}^{i} \right)y_{k}^{m}+a_{k}\left(\sum_{i=0}^{m-1}q_{k}^{i} \right)y_{k}^{m-1}$.
\end{enumerate}
Hence, the elements $ y_k^{l_k} $ are central, assuming that $ q_k $ is a $ l_k- $th root of unity.

We verify the first equality by induction. The proof of the second identity is similar. The case $
m=1 $ is (\ref{Def2.4}). Then we assume it is valid for $ m $. So,
\begin{align*}
y_{k}x_{k}^{m+1}=&(y_{k}x_{k}^{m})x_{k}\\
=&(\sum_{j=0}^{m}\binom{m}{j}q_{k}^{m-j}d_{k}^{j}x_{k}^{m-j}y_{k}+a_{k}\left(\sum_{i=0}^{m-1}q_{k}^{i}\right)x_{k}^{m-1}+\dots+
d_{k}^{j-1}a_{k}\left(\sum_{i=0}^{m-j}\binom{i+j-1}{j-1}q_{k}^{i}\right)x_{k}^{m-j})x\\
=&\sum_{j=0}^{m}\binom{m}{j}q_{k}^{m-j}d_{k}^{j}x_{k}^{m-j}(q_{k}x_{k}y_{k}+d_{k}y_{k}+a_{k})+a_{k}\left(\sum_{i=0}^{m-1}q_{k}^{i}\right)x_{k}^{m}+\dots+\\
&d_{k}^{j-1}a_{k}\left(\sum_{i=0}^{m-j}\binom{i+j-1}{j-1}q_{k}^{i}\right)x_{k}^{m+1-j}.\\
=&\sum_{j=0}^{m}\binom{m}{j}q_{k}^{m+1-j}d_{k}^{j}x_{k}^{m+1-j}y_{k}+\sum_{j=0}^{m}\binom{m}{j}q_{k}^{m-j}d_{k}^{j+1}x_{k}^{m-j}y_{k}+a_{k}\sum_{j=0}^{m}\binom{m}{j}q_{k}^{m-j}d_{k}^{j}x_{k}^{m-j}y_{k}+\\ &a_{k}\left(\sum_{i=0}^{m-1}q_{k}^{i}\right)x_{k}^{m}+\dots+
d_{k}^{j-1}a_{k}\left(\sum_{i=0}^{m-j}\binom{i+j-1}{j-1}q_{k}^{i}\right)x_{k}^{m+1-j}.\\
=&\sum_{j=0}^{m+1}\binom{m+1}{j}q_{k}^{m+1-j}d_{k}^{j}x_{k}^{m+1-j}y_{k}+a_{k}\left(\sum_{i=0}^{m}q_{k}^{i}\right)x_{k}^{m}+\dots+\\ &d_{k}^{j-1}a_{k}\left(\sum_{i=0}^{m+1-j}\binom{i+j-1}{j-1}q_{k}^{i}\right)x_{k}^{m+1-j}.\\
=&y_{k}x_{k}^{m+1}.
\end{align*}

\textit{Step 2}. Let $ f=\sum_{i=1}^{t}P_{i}(X)y_{1}^{\alpha_{i1}}\cdots y_{n}^{\alpha_{in}}\in
Z(A) $, with $ \alpha_{ij}\geq 1 $   and $ P_{i}(X)\in K[x_{1},\dots,x_{n}]-\{0\} $ for $
i=1,\dots,t $ and $ j=1,2,\dots,n $. Since $ x_{1}f=fx_{1} $, then
    \begin{align*}
    \sum_{i=1}^{t}P_{i}(X)x_{1}y_{1}^{\alpha_{i1}}\cdots y_{n}^{\alpha_{in}}=&\sum_{i=1}^{t}P_{i}(X)y_{1}^{\alpha_{i1}}\cdots y_{n}^{\alpha_{in}}x_{1}\\
    =&\sum_{i=1}^{t}P_{i}(X)y_{1}^{\alpha_{i1}}x_{1}\cdots y_{n}^{\alpha_{in}}\\
    =&\sum_{i=1}^{t}P_{i}(X)( q_{1}^{\alpha_{i1}}x_{1}y_{1}^{\alpha_{i1}}+d_{1}\left(\sum_{i=0}^{\alpha_{i1}-1}q_{1}^{i} \right)y_{1}^{\alpha_{i1}}+\\
    &a_{1}\left(\sum_{i=0}^{\alpha_{i1}-1}q_{1}^{i} \right)y_{1}^{\alpha_{i1}-1})y_{2}^{\alpha_{i2}}\dots y_{n}^{\alpha_{in}}\\
    =&\sum_{i=1}^{t}q_{1}^{\alpha_{i1}}P_{i}(X) x_{1}y_{1}^{\alpha_{i1}}\cdots y_{n}^{\alpha_{in}}+\sum_{i=1}^{t}P_{i}(X)(d_{1}\left(\sum_{i=0}^{\alpha_{i1}-1}q_{1}^{i} \right)y_{1}^{\alpha_{i1}}\\
    &+
    a_{1}\left(\sum_{i=0}^{\alpha_{i1}-1}q_{1}^{i} \right)y_{1}^{\alpha_{i1}-1})y_{2}^{\alpha_{i2}}\dots y_{n}^{\alpha_{in}}.\\
    \end{align*}
    Since $ x_{1}f-fx_{1}=0 $ and $ A $ is a left $ K[x_{1},\dots,x_{n}]- $free  module with canonical basis $ \{y_{1}^{\alpha_{i1}}y_{2}^{\alpha_{i2}}\dots
    y_{n}^{\alpha_{in}}\mid (\alpha_{i1},\dots,\alpha_{in})\in \NN^{n} \} $, then $ P_{i}(X)=q_{1}^{\alpha_{i1}}P_{i}(X) $, and
     $ \sum_{i=1}^{t}d_{1}P_{i}(X)\left(\sum_{i=0}^{\alpha_{i1}-1}q_{1}^{i} \right)y_{1}^{\alpha_{i1}}y_{2}^{\alpha_{i2}}\dots y_{n}^{\alpha_{in}}+$ \\
     $\sum_{i=1}^{t}a_{1}\left(\sum_{i=0}^{\alpha_{i1}-1}q_{1}^{i} \right)y_{1}^{\alpha_{i1}-1}y_{2}^{\alpha_{i2}}\dots y_{n}^{\alpha_{in}}=0 $.
     Thus, if $ q_{1} $ is not root of unity, then
    the first equality implies that $ \alpha_{i1}=0 $ for $ 1\leq i\leq t $. If $ q_{1}=1 $,
    then we compare the leader monomial of \\$ \sum_{i=1}^{t}d_{1}P_{i}(X)\left(\sum_{i=0}^{\alpha_{i1}-1}q_{1}^{i}
    \right)y_{1}^{\alpha_{i1}}y_{2}^{\alpha_{i2}}\dots y_{n}^{\alpha_{in}}$ if $ d_{1}\neq 0 $, or $ \sum_{i=1}^{t}a_{1}\left(\sum_{i=0}^{\alpha_{i1}-1}q_{1}^{i}
    \right)y_{1}^{\alpha_{i1}-1}y_{2}^{\alpha_{i2}}\dots y_{n}^{\alpha_{in}} $ otherwise. Since $ A $ is a domain and $ char(K)=0 $,
    then necessarily $ \alpha_{i1}=0 $ for $ 1\leq i\leq t $,
    i.e., $ f  $ does not have powers of $ y_{1} $. Repeating this procedure with $ x_{2},\dots,x_{n} $ we conclude that $ f=P(X)=\sum_{i=1}^{r}k_{i}x_{1}^{\beta_{i1}}\dots x_{n}^{\beta_{in}}\in K[x_{1},\dots,x_{n}]-\{0\} $, but  $ y_{1}f=fy_{1} $, so
    \begin{align*}
    \sum_{i=1}^{r}k_{i}x_{1}^{\beta_{i1}}\dots x_{n}^{\beta_{in}}y_{1}=&y_{1}\sum_{i=1}^{r}k_{i}x_{1}^{\beta_{i1}}\dots x_{n}^{\beta_{in}}\\
    =&\sum_{i=1}^{r}k_{i}(\sum_{j=0}^{\beta_{i1}}\binom{\beta_{i1}}{j}q_{1}^{\beta_{i1}-j}d_{1}^{j}x_{1}^{\beta_{i1}-j}y_{1}+a_{1}\left(\sum_{j=0}^{\beta_{i1}-1}q_{1}^{j}\right)x_{1}^{\beta_{i1}-1}+\dots+d_{1}^{j-1}a_{1}\\
    &\left(\sum_{l=0}^{\beta_{i1}-j}\binom{l+j-1}{j-1}q_{1}^{l}\right)x_{1}^{\beta_{i1}-l})x_{2}^{\beta_{i2}}\cdots
    x_{n}^{\beta_{in}};
    \end{align*}
    then comparing terms, $ k_{i}=k_{i}q_{1}^{\beta_{i1}} $ implies $ \beta_{i1}=0 $ if $ q_{1} $ is not root of unity.
    If $ q_{1}=1 $ and $ d_{1}\neq 0 $ then $ k_{i}\binom{\beta_{i1}}{j}q_{1}^{\beta_{i1}-j}d_{1}^{j}=0 $, or $ a_{1}\sum_{j=0}^{\beta_{i1}-1}q_{1}^{i}=0 $ if
    $ d_{1}=0 $; in any case, it is only possible if $ \beta_{i1}=0 $ , $ 1\leq i\leq r $, i.e., $ f=\sum_{i=1}^{r}k_{1}x_{2}^{\beta_{i2}}\cdots x_{n}^{\beta_{in}} $.
    Performing the same procedure with $ y_{2},\dots, y_{n} $, we conclude that $ f=k\in K $. Therefore $ K\subseteq Z(A)\subseteq K $.\\
    In the case when each $ q_{i} $ is a root of unity of degree $ l_{i} $, then $ q_{i}^{l_{i}}=1 $ and $ \sum_{j=0}^{l_{i}-1}q_{i}^{j}=0 $.
    Thus     the relation $ y_{i}^{l_{i}}x_{i}= q_{i}^{l_{i}}x_{i}y_{i}^{l_{i}}+d_{i}\left(\sum_{j=0}^{l_{i}-1}q_{i}^{j}
    \right)y_{i}^{l_{i}}+a_{i}\left(\sum_{j=0}^{l_i-1}q_{i}^{j} \right)y_{i}^{l_{i}-1}=x_{i}y_{i}^{l_{i}}$, and since $ y_{i}y_{j}=y_{j}y_{i} $,
    $ y_{i}x_{j}=x_{j}y_{i} $, then for $ 1\leq i\leq n $, $ y_{i}^{l_{i}} $ is a central element of $ A $. Therefore, $ K\subsetneq Z(A) $.
\end{proof}

\begin{remark} In (\ref{Def2.4}), if $ d_k=0 $, then $
y_{k}x_{k}^{m}=q_{k}^{m}x_{k}^{m}y_{k}+a_{k}\left(\sum_{i=0}^{m-1}q_{k}^{i}\right)x_{k}^{m-1} $,
and if $ q_i $ is a root of unity of degree $ l_i $ for $ 1\leq i\leq n $, then $ x_i^{l_i} $ are
central elements. Thus, $ Z(A)=K[x_1^{l_1},\dots,x_n^{l_n},y_1^{l_1},\dots,y_n^{l_n}] $. Finally,
if some $ q_i $ is not a root of unity, then we exclude the variables $ x_i $, $ y_i $ from the
previous center.
\end{remark}

\begin{examples}
    According to Theorem \ref{operator}, the following algebras have trivial center:
    \begin{enumerate}
        \item \textit{Operator algebras}.\label{operatoralgebras} Some important well-known operator
        algebras can be described as skew $ PBW $ extensions of $ K[t_1,\dots,t_n]$.
        \begin{enumerate}
            \item [\rm (a)]\textit{Algebra of linear partial differential operators.}
            The $n$-th Weyl algebra $A_n(K)$ over $K$ coincides with the
            $K$-algebra of linear partial differential operators with
            polynomial coefficients $K[t_1,\dots,t_n]$.
            The generators of $A_n(K)$ satisfy the following
            relations $t_it_j=t_jt_i$, $ \partial_i\partial_j=\partial_j\partial_i$, $ 1\le i<j\le n$,
            $\partial_jt_i=t_i\partial_j+\delta_{ij}$, $ 1\le i,j\le n$, where $\delta_{ij}$ is the Kronecker symbol.
            Thus, $ q_{i}=1 $, $ d_{i}=0 $ and $ a_{i}=1 $ for $ 1\leq i\leq n $.
            \item [\rm (b)]\textit{Algebra of linear partial shift operators.} The $K$-algebra of linear partial \textit{shift}
            (\textit{recurrence}) operators with polynomial, respectively with rational coefficients, is
            $K[t_1,\dotsc,t_n][E_1,\dotsc,E_n]$, respectively
            $K(t_1,\dotsc,t_n)[E_1,\dotsc,E_n]$,
            subject to the relations: $ t_jt_i=t_it_j $, $ E_jE_i=E_iE_j $ for $ 1\le i<j\le n$, and $ E_it_i=(t_i+1)E_i=t_iE_i+E_i$,
            $1\le i\le n$, $E_jt_i=t_iE_j$, $ i\neq j$. So $ q_{i}=1 $, $ d_{i}=1 $ and $ a_{i}=0 $ for $ 1\leq i\leq n$.
            \item [\rm (c)]\textit{Algebra of linear partial difference operators.} The $K$-algebra of
            linear partial \textit{difference opertors} with polynomial,
            respectively with rational coefficients, is
            $K[t_1,\dotsc,t_n][\Delta_1,\dotsc,\Delta_n]$,
            respectively $K(t_1,\dotsc,t_n)[\Delta_1,\dotsc,\Delta_n]$, subject to the relations:

            $t_jt_i=t_it_j$, $1\le i<j\le n$,
            $\Delta_it_i=(t_i+1)\Delta_i+1=t_i\Delta_i+\Delta_i+1$, $ 1\le i\le n$,
            $\Delta_jt_i=t_i\Delta_j$, $i\neq j$,
            $\Delta_j\Delta_i=\Delta_i\Delta_j$, $1\le i<j\le n$. Thus $ q_{i}=1 $, $ d_{i}=1 $ and $ a_{i}=1 $ for $ 1\leq i\leq n$.
            \item [\rm (d)]\textit{Algebra of linear partial $q$-dilation operators.} For a fixed $q\in K-\{0\}$, the
            $K$-algebra of linear partial $q$-\textit{dilation operators} with
            polynomial coefficients, respectively, with rational coefficients,
            is $K[t_1,\dotsc,t_n][H_1^{(q)},\dotsc,H_n^{(q)}]$,
            respectively
            $K(t_1,\dotsc,t_n)[H_1^{(q)},\dotsc,H_n^{(q)}]$,
            subject to the relations:
            $t_jt_i=t_it_j$, $1\le i<j\le n$,
            $H_i^{(q)}t_i=qt_iH_i^{(q)}$, $1\le i\le n$,
            $H_j^{(q)}t_i=t_iH_j^{(q)}$, $i\neq j$,
            $H_j^{(q)}H_i^{(q)}=H_i^{(q)}H_j^{(q)}$, $1\le i<j\le n$.
            Thus $ q_{i}=q $, $ d_{i}=0 $ and $ a_{i}=0 $ for $ 1\leq i\leq n $.
            \item [\rm (e)]\textit{Algebra of linear partial $q$-differential operators.} For a fixed $q\in K-\{0\}$, the
            $K$-algebra of linear partial $q$-\textit{differential operators}
            with polynomial coefficients, respectively with rational
            coefficients is $K[t_1,\dotsc,t_n][D_1^{(q)},\dotsc,D_n^{(q)}]$,
            respectively the ring
            $K(t_1,\dotsc,t_n)[D_1^{(q)},\dotsc,D_n^{(q)}]$,
            subject to the relations:
            $t_jt_i=t_it_j$, $1\le i<j\le n$,
            $D_i^{(q)}t_i=qt_iD_i^{(q)}+1$, $1\le i\le n$,
            $D_j^{(q)}t_i=t_iD_j^{(q)}$, $i\neq j$,
            $D_j^{(q)}D_i^{(q)}=D_i^{(q)}D_j^{(q)}$, $1\le i<j\le n$. So $ q_{i}=q $, $ d_{i}=0 $ and $ a_{i}=1 $ for $ 1\leq i\leq n $.
        \end{enumerate}
    \item \label{multidime} The \textit{algebra $D$ for multidimensional discrete linear systems}.
    This algebra is a quasi-commutative bijective skew $ PBW $ extension of  $K[t_1,\dots,t_n]$, $D=\sigma(K[t_1,\dots,t_n])\langle x_1,\dots, x_n\rangle$.
    In fact, by definition, $D$ is the iterated skew polynomial ring $D:=K[t_1,\dots,t_n][x_1;\sigma_1]\cdots[x_n;\sigma_n]$, where $\sigma_i(p(t_1,\dots,t_n)):=
    p(t_1,\dots,t_{i-1},t_{i}+1,t_{i+1},\dots,t_n), \ \sigma_i(x_i)=x_i$, $1\leq i\leq n$. Thus, $ q_{i}=1 $, $ d_{i}=1 $ and $ a_{i}=0 $ for $ 1\leq i\leq n $.
    \item \label{additivean} \textit{Additive analogue of the Weyl algebra.} The $K$-algebra $A_n(q_1,\dots,q_n)$ is generated by
    $x_1,\dots,x_n,$ $y_1,\dots,y_n$ subject to the relations: $x_jx_i = x_ix_j$, $y_jy_i= y_iy_j$, $ 1 \leq i,j \leq n$,
    $y_ix_j=x_jy_i$, $i\neq j$, $y_ix_i= q_ix_iy_i + 1$, $1\leq i\leq n$, where $q_i\in K-\{0\}$. From these relations we have
        $A_n(q_1, \dots, q_n)\cong \sigma(K)\langle x_1,\dotsc,x_n;y_1,\dots,y_n\rangle\cong \sigma(K[x_1,\dotsc,x_n])\langle y_1,\dots,y_n\rangle$. So $ q_{i}\neq 0 $, $ d_{i}=0 $ and $ a_{i}=1 $ for $ 1\leq i\leq n $.
    \item The Weyl algebra $ A_{n}(K) $ (Example \ref{PBW} (b)), $ q_{i}=1 $, $ d_{i}=0 $ and $ a_{i}=1 $ for $ 1\leq i\leq n $.
    \item The algebra of $ q- $differential operators $ D_{q,h}[x,y] $ (Example \ref{ore}(a)), $ q_{1}=q $, $ d_{i}=0 $ and $ a_{1}=h $.
    \item The algebra of shift operators $ S_{h} $ (Example \ref{ore}(b)), $ q_{1}=1 $, $ d_{1}=-h $ and $ a_{i}=0 $ .

    \end{enumerate}
\end{examples}
\begin{examples}
    We have computed the center of the following algebras (our computations of course are agree with
    Theorem \ref{operator}, where the parameters $ q_{i}$ are roots of unity of degree $ l_{i} $):
    \begin{enumerate}
        \item The algebra of $ q- $differential operators $ D_{q,h}[x,y] $, $ q_{1}=q $, $ d_{i}=0 $ and $ a_{1}=h $, $ Z(A)=K[x^{l},y^{l}] $.
        \item Additive analogue of the Weyl algebra, $ q_{i}\neq 0 $, $ d_{i}=0 $ and $ a_{i}=1 $ for $ 1\leq i\leq n $, $ Z(A)=K[x_{1}^{l_{1}},\dots,x_{n}^{l_{n}},y_{1}^{l_{1}},\dots,y_{n}^{l_{n}}] $.
        \item Algebra of linear partial $ q- $dilation operators, $ q_{i}=q $, $ d_{i}=0 $ and $ a_{i}=0 $ for $ 1\leq i\leq n $, $Z(A)=K[t_{1}^{l},\dots,t_{n}^{l},H_{1}^{l},\dots,H_{n}^{l}] $, $ l=$degree$(q) $.
        \item Algebra of linear partial $ q- $differential operators, $ q_{i}=q $, $ d_{i}=0 $ and $ a_{i}=1 $ for $ 1\leq i\leq n $.
        $Z(A)=K[t_{1}^{l},\dots,t_{n}^{l},D_{1}^{l},\dots,D_{n}^{l}] $, $ l=$degree$(q) $.

    \end{enumerate}
\end{examples}

\section{Central subalgebras of other remarkable quantum algebras}
In this section we consider other noncommutative algebras of quantum type that can be interpreted
as skew $ PBW $ extensions, and for them, we compute either the center or some central key
subalgebras. These algebras are not covered by Thereom \ref{operator}. The proofs of the next
theorems are based in some lemmas which proofs we avoid since they can be realized by direct
computations.

\subsection{Woronowicz algebra $\cW_{\nu}(\mathfrak{sl}(2,K))$}
This algebra was
introduced by Woronowicz in \cite{Woronowicz} and it is generated
by $x,y,z$ subject to the relations
\begin{gather*}
xz-q^4zx=(1+q^2)x,\ \ \ xy-q^2yx=q z,\ \ \
zy-q^4yz=(1+q^2)y,
\end{gather*}
where $q \in K-\{0\}$ is not a root of unity. Then $\mathcal{W}_{q}(\mathfrak{sl}(2,K))\cong
\sigma (\textbf{\emph{K}}) \langle x,y,z\rangle$.
\begin{lemma} Let $ m\geq 1 $, then  in $\mathcal{W}_{q}(\mathfrak{sl}(2,K))$ the following identities hold:
    \begin{enumerate}
        \item $ y^{m}x= \frac{1}{q^{2m}}xy^{m}-\frac{\left( \sum_{i=0}^{m-1} q^{2i}\right)}{q}y^{m-1}z-
            \frac{\left(\sum_{i=1}^{m-1}iq^{2(i-1)}+\sum_{i=m}^{2m-2}((2m-1-i)q^{2(i-1)}  \right)}{q^{2m-1}}y^{m-1}
            $.
        \item $ yx^{m}= \frac{1}{q^{2m}}x^{m}y-\frac{\left(\sum_{i=0}^{m-1}q^{2i}  \right)}{q^{4m-3}}x^{m-1}z+
        \frac{\left( \sum_{i=1}^{m-1}iq^{2(i-1)}+\sum_{i=m}^{2m-2}(2m-1-i)q^{2(i-1)} \right)}{q^{4m-3}}x^{m-1}
        $.
        \item $ z^{m}x=\frac{\sum_{i=0}^{m}(-1)^{i}{m \choose i}(q^{2}+1)^{i}xz^{m-i}  }{q^{4m}}$.
        \item $ zx^{m}=\frac{1}{q^{4m}}x^{m}z+\frac{1}{q^{4m}}\left(\sum_{i=0}^{2m-1}q^{2i} \right)x^{m} $.
        \item $ z^{m}y= y(q^{4}z+(q^{2}+1))^{m}$.
        \item $ zy^{m}=q^{4m}y^{m}z+\sum_{i=0}^{2m-1}q^{2i}y^{m}$.
    \end{enumerate}
\end{lemma}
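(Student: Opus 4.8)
The plan is to prove all six identities by induction on $m$, the base case $m=1$ being in each instance an immediate rearrangement of one of the three defining relations: $(1)$ and $(2)$ both collapse at $m=1$ to $yx=q^{-2}xy-q^{-1}z$ (from $xy-q^{2}yx=qz$), $(3)$ and $(4)$ to $zx=q^{-4}xz-q^{-4}(1+q^{2})x$ (from $xz-q^{4}zx=(1+q^{2})x$), and $(5),(6)$ to $zy=q^{4}yz+(1+q^{2})y$. I would run the inductions in the order $(5),(6),(3),(4),(1),(2)$: the first four involve only two of the three generators, whereas the inductive steps for $(1)$ and $(2)$ each produce, after a single use of $xy-q^{2}yx=qz$, a monomial with $z$ sitting to the left of a power of $y$ (resp.\ of $x$), which is straightened by $(6)$ (resp.\ by the $z$--$x$ relation). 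In every step one multiplies the identity for $m$ by the appropriate generator on the appropriate side, moves it into the standard form $x^{a}y^{b}z^{c}$ using the defining relations and the already-established identities, and equates coefficients on the basis $\{x^{a}y^{b}z^{c}\}$ of $\mathcal{W}_{q}(\mathfrak{sl}(2,K))\cong\sigma(K)\langle x,y,z\rangle$.

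Identity $(5)$ is the cleanest: $zy-q^{4}yz=(1+q^{2})y$ reads $zy=y\bigl(q^{4}z+(q^{2}+1)\bigr)$, with $q^{4}z+(q^{2}+1)$ a single element, so by associativity $z^{m+1}y=z^{m}(zy)=(z^{m}y)\bigl(q^{4}z+(q^{2}+1)\bigr)$ and the inductive hypothesis gives $z^{m+1}y=y\bigl(q^{4}z+(q^{2}+1)\bigr)^{m+1}$. For $(6)$, multiply on the right by $y$ and replace $y^{m}zy$ by $q^{4}y^{m+1}z+(1+q^{2})y^{m+1}$; the step reduces to the obvious $q^{4m}(1+q^{2})+\sum_{i=0}^{2m-1}q^{2i}=\sum_{i=0}^{2m+1}q^{2i}$. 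Identities $(3)$ and $(4)$ are the ``binomial'' and ``geometric'' analogues for the pair $x,z$: for $(3)$, multiply on the left by $z$, substitute $zx=q^{-4}xz-q^{-4}(1+q^{2})x$ into each summand, reindex, and apply Pascal's rule $\binom{m}{i}+\binom{m}{i-1}=\binom{m+1}{i}$; for $(4)$, multiply on the right by $x$, substitute the same $m=1$ relation, and check the resulting identity between truncated geometric series in $q^{2}$.

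The laborious cases are $(1)$ and $(2)$. For $(1)$, multiply $y^{m}x$ on the left by $y$; writing $y\cdot(xy^{m})=(yx)y^{m}=q^{-2}xy^{m+1}-q^{-1}zy^{m}$ and straightening $zy^{m}$ by $(6)$, one obtains $y^{m+1}x$ in terms of $xy^{m+1}$, $y^{m}z$ and $y^{m}$; the $xy^{m+1}$- and $y^{m}z$-coefficients match after a one-line geometric-sum computation, while the $y^{m}$-coefficient amounts to the scalar identity
\[
\sum_{i=0}^{2m-1}q^{2i}+q^{2}\Bigl(\sum_{i=1}^{m-1}iq^{2(i-1)}+\sum_{i=m}^{2m-2}(2m-1-i)q^{2(i-1)}\Bigr)=\sum_{i=1}^{m}iq^{2(i-1)}+\sum_{i=m+1}^{2m}(2m+1-i)q^{2(i-1)}.
\]
This becomes transparent once one observes that in the double sum with parameter $m$ the coefficient of $q^{2(j-1)}$ is $\min(j,2m-1-j)$ for $1\le j\le 2m-2$: both sides of the displayed identity then have coefficient $\min(j,2m+1-j)$ of $q^{2(j-1)}$. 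Identity $(2)$ is entirely parallel, with $x$ replacing the outer $y$: multiply $yx^{m}$ on the right by $x$, use $yx=q^{-2}xy-q^{-1}z$ and $zx=q^{-4}xz-q^{-4}(1+q^{2})x$ to straighten $x^{m}(yx)$ and $x^{m-1}(zx)$, collect, and reduce to $(1+q^{2})\sum_{i=0}^{m-1}q^{2i}+q^{4}\bigl(\sum_{i=1}^{m-1}iq^{2(i-1)}+\sum_{i=m}^{2m-2}(2m-1-i)q^{2(i-1)}\bigr)=\sum_{i=1}^{m}iq^{2(i-1)}+\sum_{i=m+1}^{2m}(2m+1-i)q^{2(i-1)}$, again checked coefficient by coefficient through the same closed form.

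The only genuine obstacle is bookkeeping: carrying the $q$-polynomial coefficients — the geometric sums $\sum q^{2i}$ and, above all, the triangular double sums $\sum_{i=1}^{m-1}iq^{2(i-1)}+\sum_{i=m}^{2m-2}(2m-1-i)q^{2(i-1)}$ — accurately through the exponent shifts $q^{2}\cdot$ and $q^{4}\cdot$ and the denominators $q^{2m},q^{4m-3},q^{4m}$. Once one reindexes so as to compare the coefficient of a single power of $q$ on the two sides, every verification collapses; there is no structural difficulty, which is why the paper notes that these relations are obtained ``by direct computation''.
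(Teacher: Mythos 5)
The paper offers no proof of this lemma (it explicitly defers to ``direct computations''), so your induction is the intended argument, and its substance checks out. The ordering $(5),(6),(3),(4),(1),(2)$ is the right one; the observation that the coefficient of $q^{2(j-1)}$ in $S_{m}:=\sum_{i=1}^{m-1}iq^{2(i-1)}+\sum_{i=m}^{2m-2}(2m-1-i)q^{2(i-1)}$ is $\min(j,2m-1-j)$ is correct; and the two scalar identities you reduce $(1)$ and $(2)$ to, namely $\sum_{i=0}^{2m-1}q^{2i}+q^{2}S_{m}=S_{m+1}$ and $(1+q^{2})\sum_{i=0}^{m-1}q^{2i}+q^{4}S_{m}=S_{m+1}$, are exactly what the inductive steps produce (I have verified both, together with the Pascal step for $(3)$ and the geometric-series step for $(6)$).

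There is, however, one concrete problem, in item $(4)$. You assert that its base case ``collapses to $zx=q^{-4}xz-q^{-4}(1+q^{2})x$''; that is indeed what the defining relation $xz-q^{4}zx=(1+q^{2})x$ yields, but it is \emph{not} what identity $(4)$ states at $m=1$, which is $zx=q^{-4}xz+q^{-4}(1+q^{2})x$. Items $(3)$ and $(4)$ of the lemma contradict each other at $m=1$ unless $1+q^{2}=0$: the stated $(4)$ is false as written, and what your induction actually establishes is the sign-corrected identity $zx^{m}=q^{-4m}x^{m}z-q^{-4m}\bigl(\sum_{i=0}^{2m-1}q^{2i}\bigr)x^{m}$. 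An induction cannot start from a base case that differs in sign from the claim, so you should have flagged the discrepancy instead of silently identifying the two; note also that the sign is not cosmetic, since the proof of the theorem following this lemma invokes $(4)$ with the plus sign.
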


\begin{theorem} The Woronowicz algebra has trivial center,
    $Z(\mathcal{W}_{q}(\mathfrak{sl}(2,K)))=K$.
\end{theorem}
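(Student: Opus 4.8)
The plan is to reduce to the quasi-commutative case by passing to the associated graded ring. By Theorem~\ref{filteredskew}, $Gr(\mathcal{W}_{q}(\mathfrak{sl}(2,K)))$ is the quasi-commutative skew $PBW$ extension $\sigma(K)\langle x,y,z\rangle$ obtained by discarding the lower-order terms of the three defining relations; solving those relations for $zx$, $yx$, $zy$ gives $c_{13}=q^{-4}$, $c_{12}=q^{-2}$, $c_{23}=q^{4}$, so the relations of $Gr(A)$ are $yx=q^{-2}xy$, $zx=q^{-4}xz$, $zy=q^{4}yz$. Since these constants are powers of a single scalar, the coprimality hypotheses of the theorem following Lemma~\ref{quasi} are not met, so I would compute $Z(Gr(A))$ directly.

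Write a central element of $Gr(A)$ as $h=\sum_{a,b,c}\lambda_{abc}x^{a}y^{b}z^{c}$ in the $K$-basis of standard monomials. Pushing $x$ to the left through a standard monomial in $Gr(A)$ gives $x^{a}y^{b}z^{c}\,x=q^{-2b-4c}x^{a+1}y^{b}z^{c}$, so $hx=xh$ forces $\lambda_{abc}\bigl(q^{-2b-4c}-1\bigr)=0$ for every triple; since $q$ is not a root of unity and $b,c\geq 0$, every term with $\lambda_{abc}\neq 0$ has $b=c=0$, i.e.\ $h\in K[x]$. Then $zh=hz$ gives $\lambda_{a00}\bigl(q^{-4a}-1\bigr)=0$, whence $a=0$ and $h\in K$. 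Thus $Z(Gr(A))=K$.

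To return to $A$, let $f\in Z(A)$ have degree $d$ and let $\bar f$ denote its image in $F_{d}/F_{d-1}=Gr(A)_{d}$, which is nonzero. For any homogeneous $g\in Gr(A)$ with a lift $\hat g$, the equality $f\hat g=\hat g f$ in $A$ projects, by the definition of the multiplication on $Gr(A)$, to $\bar f\,g=g\,\bar f$; hence $\bar f\in Z(Gr(A))=K\subseteq Gr(A)_{0}$, which is only compatible with $d=0$. Therefore $f\in F_{0}=K$, and $Z(\mathcal{W}_{q}(\mathfrak{sl}(2,K)))=K$. I expect the only delicate point to be this last transfer step; should one instead prefer a computation staged entirely inside $A$, the obstacle becomes the bookkeeping of the lower-order terms produced when the rewriting rules of the preceding Lemma are applied to $xf-fx$ and $zf-fz$, and passing to $Gr(A)$ is precisely what renders those corrections irrelevant, the essential input throughout being that $q$ is not a root of unity.
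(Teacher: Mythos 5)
Your argument is correct, but it is not the route the paper takes. The paper proves this theorem by brute force inside $\mathcal{W}_{q}(\mathfrak{sl}(2,K))$ itself: it first establishes, by induction, six explicit commutation identities ($y^{m}x$, $yx^{m}$, $z^{m}x$, $zx^{m}$, $z^{m}y$, $zy^{m}$), then takes a general $f=\sum C_{i}x^{\alpha_{i1}}y^{\alpha_{i2}}z^{\alpha_{i3}}$ in the center, expands $zf=fz$ using two of those identities, compares coefficients to force $\alpha_{i1}=\alpha_{i2}$ and then $C_{i}=0$ unless $\alpha_{i1}=\alpha_{i2}=0$ (using that $q$ is not a root of unity), and finally kills the $z$-powers with $yf=fy$. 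You instead pass to $Gr(A)$ via Theorem \ref{filteredskew}, observe that the graded relations are $yx=q^{-2}xy$, $zx=q^{-4}xz$, $zy=q^{4}yz$ (your constants are right, and you are also right that the coprimality hypotheses of the theorem following Lemma \ref{quasi} fail here, so a direct two-line computation of $Z(Gr(A))$ is needed), and then transfer back through the standard inclusion $Gr(Z(A))\subseteq Z(Gr(A))$. That transfer step is sound as you state it: for $f\in Z(A)$ of degree $d$ and any lift $\hat g$ of a homogeneous $g$, the identity $f\hat g-\hat gf=0$ lands in $F_{d+e-1}$, so $\bar f g=g\bar f$ in $Gr(A)$, and $\bar f\neq 0$ in $Gr(A)_{d}$ together with $Z(Gr(A))=K\subseteq Gr(A)_{0}$ forces $d=0$. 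This is in fact the same technique the paper itself deploys for the theorem following Lemma \ref{quasi} and for the conformal algebra, just not for Woronowicz. The trade-off: your route is shorter and dispenses entirely with the six-identity lemma, but it only works cleanly because $Z(Gr(A))$ turns out to be trivial; when the graded center is larger (e.g.\ in root-of-unity situations) the inclusion $Gr(Z(A))\subseteq Z(Gr(A))$ gives only an upper bound, and the explicit commutation formulas of the paper's approach become necessary again.
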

\begin{proof}
    Let $ f(x,y,z)=\sum_{i=0}^{n}C_{i}x^{\alpha_{i1}}y^{\alpha_{i2}}z^{\alpha_{i3}} \in Z(\mathcal{W}_{q}(\mathfrak{sl}(2,K)))$, with $ \alpha_{ij}\in \NN $, $ j=1,2,3 $. Since $ zf=fz $, then

    \small{\begin{align}
        \sum_{i=0}^{n}C_{i}x^{\alpha_{i1}}y^{\alpha_{i2}}z^{\alpha_{i3}+1}&=z\sum_{i=0}^{n}C_{i}x^{\alpha_{i1}}y^{\alpha_{i2}}z^{\alpha_{i3}} \\
        &=\sum_{i= 0}^n C_i\left(\frac{1}{q^{4\alpha_{i1}}}x^{\alpha_{i1}}zy^{\alpha_{i2}}z^{\alpha_{i3}} \right)+\frac{C_i}{q^{4\alpha_{i1}}}\left(\sum_{j=0}^{2\alpha_{i1}-1}q^{2j} \right)x^{\alpha_{i1}}y^{\alpha_{i2}}z^{\alpha_{i3}} \label{2.2}\\
        &=\sum_{i= 0}^n \frac{C_i}{q^{4\alpha_{i1}}}\left[\left(x^{\alpha_{i1}}\left(q^{4\alpha_{i2}}y^{\alpha_{i2}}z+\sum_{j=0}^{2\alpha_{i2}-1} q^{2j}y^{\alpha_{i2}}\right) z^{\alpha_{i3}} \right)+\left( \sum_{j=0}^{2\alpha_{i1}-1}q^{2j} \right)x^{\alpha_{i1}}y^{\alpha_{i2}}z^{\alpha_{i3}}\right]\label{2.3}\\
        &=\sum_{i=0}^n C_i\frac{q^{4\alpha_{i2}}}{q^{4\alpha_{i1}}}x^{\alpha_{i1}}y^{\alpha_{i2}} z^{\alpha_{i3}+1} +\frac{C_i}{q^{4\alpha_{i1}}}
        \left( \sum_{j=0}^{2\alpha_{i2}-1}q^{2j}+\sum_{j=0}^{2\alpha_{i1}-1}q^{2j}
        \right)x^{\alpha_{i1}}y^{\alpha_{i2}}z^{\alpha_{i3}}.
        \end{align}}
    \noindent
    Equations ($\ref{2.2}$) and ($\ref{2.3}$) are the relations $ 4 $ and $ 6 $ in the previous lemma, respectively.  From this,
    $ C_i=C_i\frac{q^{4\alpha_{i2}}}{q^{4\alpha_{i1}}} $ and $\frac{C_i}{q^{4\alpha_{i1}}}\left( \sum_{j=0}^{2\alpha_{i2}-1}q^{2j}+ \sum_{j=0}^{2\alpha_{i1}-1}q^{2j}
    \right)=0$, so $ 1=\frac{q^{4\alpha_{i2}}}{q^{4\alpha_{i1}}}=0 $, whence $ \alpha_{i1}=\alpha_{i2} $, and since $ char(K)=0 $, $ C_i=0 $,
     or, $ \sum_{j=0}^{2\alpha_{i1}-1}q^{2j}=0 $, but $ q $ is not a root of unity, then $\sum_{i=0}^{2\alpha_{i1}-1}q^{2j}\neq 0$, thus $ C_{i}=0 $ for
     $ \alpha_{i1},\alpha_{i2}>0 $, i.e., $ f=\sum_{i=0}^nC_{i}z^{\alpha_{i3}} $. But $ yf=fy $, so $ \sum_{i=0}^nC_iyz^{\alpha_{i3}}=
     \sum_{i=0}^nC_iz^{\alpha_{i3}}y=\sum_{i=0}^nC_iy( q^{4}z+(q^{2}+1))^{\alpha_{i3}} $, then $ C_i=C_iq^{4\alpha_{i3}} $ and
     $ \sum_{j=1}^{\alpha_{i3}} {\alpha_{i3} \choose j}q^{4(\alpha_{i3}-j)}(q^{2}+1)^{j}=0 $, whence $ q^{4\alpha_{i3}}=1 $, thus $ \alpha_{i3}=0 $ and
     therefore $ f=C_{0}\in K $. Finally, $ Z(\mathcal{W}_{q}(\mathfrak{sl}(2,K)))\subseteq K\subseteq Z(\mathcal{W}_{q}(\mathfrak{sl}(2,K))) $.
\end{proof}


\subsection{The algebra U}
Let $U$ be the algebra generated over the field $K=\mathbb{C}$ by the set of variables
$x_i,y_i,z_i$, $1\leq i\leq n$, subject to relations:
\begin{align*}
x_jx_i &=x_ix_j,\ y_jy_i =y_iy_j,\ z_jz_i=z_iz_j, \ 1\le i,j\le n,\\
y_jx_i&=q^{\delta_{ij}}x_iy_j,\ z_jx_i =q^{-\delta_{ij}}x_iz_j, \ 1\le i,j \le n,\\
z_jy_i &=y_iz_j, \ i\neq j,\ z_iy_i =q^{2}y_iz_i-q^2x_i^2, \ 1\le
i\le n,
\end{align*}
where $q\in \mathbb{C}-\{0\}$. Note that $U$ is a bijective skew $PBW$ extension of
$\mathbb{C}[x_1,\dotsc,x_n]$, that is,     $ U\cong\sigma(\mathbb{C}[x_1,\dotsc,x_n])\langle
y_1,\dotsc,y_n;z_1,\dotsc,z_n\rangle$.

\begin{lemma}
For $ m\geq 1 $ and $ i=1,\dots,n $, in  $ U $ the following relations hold:
\begin{enumerate}
\item   $z_iy_i^m=q^{2m}y_i^mz_i-mq^2x_i^2y_i^{m-1}$,
\item   $z_i^my_i=q^{2m}y_iz_i^m-mq^{2m}x_i^2z_i^{m-1}$,
\item   $y_i^mx_i=q^mx_iy_i^m$,
\item   $z_i^mx_i=q^{-m}x_iz_i^m$.
\end{enumerate}

\end{lemma}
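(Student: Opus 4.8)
The plan is to prove each of the four identities by a separate induction on $m$, keeping the index $i$ fixed throughout; all the generators carrying an index different from $i$ are irrelevant, since by the defining relations they commute with each of $x_i,y_i,z_i$, so the whole computation takes place inside the subalgebra generated by $x_i,y_i,z_i$. It is convenient to establish (3) and (4) first, because their general-$m$ forms are exactly what makes the inductive steps for (1) and (2) go through.

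For (3) the case $m=1$ is the defining relation $y_ix_i=qx_iy_i$; assuming $y_i^mx_i=q^mx_iy_i^m$ and multiplying on the left by $y_i$, one gets
\[
y_i^{m+1}x_i=y_i(y_i^mx_i)=q^m(y_ix_i)y_i^m=q^m(qx_iy_i)y_i^m=q^{m+1}x_iy_i^{m+1}.
\]
Identity (4) is proved the same way, starting instead from $z_ix_i=q^{-1}x_iz_i$. Applying (3) and (4) twice one records the auxiliary commutation rules $y_i^mx_i^2=q^{2m}x_i^2y_i^m$ and $z_i^mx_i^2=q^{-2m}x_i^2z_i^m$, which are precisely what is needed to transport the quadratic term $x_i^2$ past a power of $y_i$ or of $z_i$.

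For (1) the base case $m=1$ is the defining relation $z_iy_i=q^2y_iz_i-q^2x_i^2$. For the inductive step I would write $z_iy_i^{m+1}=(z_iy_i^m)y_i$, substitute the induction hypothesis, carry the trailing factor $y_i$ through the two summands, and then rewrite the inner occurrence of $z_iy_i$ by the base relation; this produces a term in which $x_i^2$ sits to the right of $y_i^m$, the auxiliary rule above moves it to the left, and a comparison of the coefficients of $x_i^2y_i^m$ on the two sides completes the step. Identity (2) is entirely analogous: one multiplies by $z_i$ on the left, uses the same base relation $z_iy_i=q^2y_iz_i-q^2x_i^2$ together with $z_ix_i^2=q^{-2}x_i^2z_i$ to push $x_i^2$ past $z_i^{m}$, and again compares coefficients.

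No step presents a genuine obstacle; the only point requiring care is the bookkeeping of the exponents of $q$ in the inductive steps of (1) and (2) --- in particular, transporting $x_i^2$ past exactly the right number of copies of $y_i$ (resp.\ $z_i$) before the coefficients are matched --- which is precisely why (3) and (4) are proved beforehand. This is exactly the kind of ``direct computation'' the text refers to.
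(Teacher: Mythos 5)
Your handling of items (3) and (4) is correct, and since the paper omits the proof of this lemma entirely (it only remarks that such lemmas ``can be realized by direct computations''), induction is the natural route. The gap is in the inductive steps for (1) and (2): the coefficient comparison that you assert ``completes the step'' does not close. Carrying out your own recipe for (1): $(z_iy_i^m)y_i=q^{2m}y_i^m(q^2y_iz_i-q^2x_i^2)-mq^2x_i^2y_i^m=q^{2m+2}y_i^{m+1}z_i-q^{2m+2}y_i^mx_i^2-mq^2x_i^2y_i^m$, and after applying your auxiliary rule $y_i^mx_i^2=q^{2m}x_i^2y_i^m$ the coefficient of $x_i^2y_i^m$ is $-(q^{4m+2}+mq^2)$, which equals the target $-(m+1)q^2$ only when $q^{4m}=1$. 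Indeed the identity as printed is false for generic $q$ once $m\geq 2$: a direct check gives $z_iy_i^2=q^4y_i^2z_i-(q^6+q^2)x_i^2y_i$, not $q^4y_i^2z_i-2q^2x_i^2y_i$. What the induction actually proves is the corrected statement in which the integer $m$ is replaced by a $q$-integer,
\[
z_iy_i^m=q^{2m}y_i^mz_i-q^2\bigl(1+q^4+\cdots+q^{4(m-1)}\bigr)x_i^2y_i^{m-1},
\]
and similarly for (2), where the coefficient becomes $q^{2m}+q^{2m-4}+\cdots$ rather than $mq^{2m}$ (check $z_i^2y_i=q^4y_iz_i^2-(q^4+1)x_i^2z_i$). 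This correction is harmless for the theorem that follows, since for $q$ not a root of unity the $q$-integer still vanishes only for $m=0$; but your proof as sketched cannot establish the lemma in the form stated, and it glosses over exactly the point where the computation departs from that form.
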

\begin{theorem}
    If $q$  is not a root of unity, then the algebra $ U $ has a trivial center $ K $.
    If $q$ is a root of unity of degree $l\geq 2$, $x_1^l,\dots,x_n^l$ are central elements in $ U$ and  $ K[x_1^l,\dots,x_n^l]\subseteq Z(U) $.
\end{theorem}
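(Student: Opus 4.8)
The plan is to adapt the standard-monomial method of the previous proofs, reducing first to a single index. Note that generators carrying different indices commute: for $i\neq j$ one reads off $x_iy_j=y_jx_i$, $x_iz_j=z_jx_i$ and $y_iz_j=z_jy_i$ from the defining relations, and the $x$'s, $y$'s and $z$'s commute among themselves. Consequently, writing $U_1=\sigma(\mathbb{C}[x])\langle y,z\rangle$ for the one-index algebra with $yx=qxy$, $zx=q^{-1}xz$, $zy=q^2yz-q^2x^2$, the multiplication map induces an isomorphism $U\cong U_1^{\otimes n}$; since $Z(A\otimes_K B)=Z(A)\otimes_K Z(B)$ for $K$-algebras, the first assertion is equivalent to $Z(U_1)=K$, and it suffices to argue with one index throughout.

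To prove $Z(U_1)=K$ I would take $f=\sum_{\alpha,\beta}P_{\alpha\beta}(x)\,y^{\alpha}z^{\beta}\in Z(U_1)$ expanded in the PBW basis, $P_{\alpha\beta}\in\mathbb{C}[x]$, and first impose $xf=fx$. Moving $x$ to the right via $xy=q^{-1}yx$ and $xz=qzx$ gives $xf=\sum_{\alpha,\beta}q^{\beta-\alpha}P_{\alpha\beta}\,y^{\alpha}z^{\beta}x$, so comparison of coefficients forces $q^{\beta-\alpha}=1$ on every term. Because $q$ is not a root of unity this yields $\alpha=\beta$, reducing $f$ to the diagonal form $f=\sum_{b}P_b(x)\,y^bz^b$. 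Next I would impose $zf=fz$ and $yf=fy$, now invoking the inhomogeneous identities of the preceding Lemma, $zy^m=q^{2m}y^mz-mq^2x^2y^{m-1}$ and $z^my=q^{2m}yz^m-mq^{2m}x^2z^{m-1}$, together with $zx^a=q^{-a}x^az$ and $yx^a=q^ax^ay$. Since every monomial produced by $zf$ and $fz$ has $z$-degree exactly one more than its $y$-degree, matching the coefficient of $y^bz^{b+1}$ produces, for each $b\geq0$, a recursion of the shape $q^{2b}P_b(q^{-1}x)-(b+1)q^2x^2P_{b+1}(q^{-1}x)=P_b(x)$, and $yf=fy$ gives the companion recursion $P_b(qx)=q^{2b}P_b(x)-(b+1)q^{4b+4}x^2P_{b+1}(x)$. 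The plan is to solve these from the top $y$-degree $N$ downward: at $b=N$ the relation collapses to $q^{2N}P_N(q^{-1}x)=P_N(x)$, whose only monomial solutions (as $q$ is not a root of unity) have $x$-degree $2N$, and then to propagate this through the lower recursions so as to annihilate successively all $P_b$ with $b\geq1$ and force $P_0\in\mathbb{C}$, giving $f\in K$.

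The step I expect to be the main obstacle is exactly the control of the inhomogeneous tails $-mq^2x^2(\dots)$: these couple the coefficient of one monomial to that of its lower neighbours, so the exponents cannot be read off directly as in the quasi-commutative Lemma \ref{quasi} or in the Woronowicz computation, and the vanishing of the surviving $P_b$ has to be extracted by a careful simultaneous induction on the $y$-degree and on the $x$-degree, using at each stage that $q$ is not a root of unity to discard the homogeneous solutions $q^{2b-a}=1$. Establishing that no nonzero diagonal term persists is the heart of the argument, and it is where the bookkeeping is most delicate.

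For the second statement, with $q$ a primitive $l$-th root of unity ($l\geq2$), I would verify directly that each $x_i^{l}$ is central: iterating $y_ix_i=qx_iy_i$ and $z_ix_i=q^{-1}x_iz_i$ gives $y_ix_i^{l}=q^{l}x_i^{l}y_i=x_i^{l}y_i$ and $z_ix_i^{l}=q^{-l}x_i^{l}z_i=x_i^{l}z_i$, while every generator of a block $j\neq i$ commutes with $x_i$ by the cross-index relations; hence $x_i^{l}\in Z(U)$. As the elements $x_1^{l},\dots,x_n^{l}$ commute, they generate a commutative subalgebra and $K[x_1^{l},\dots,x_n^{l}]\subseteq Z(U)$, which is the claimed containment.
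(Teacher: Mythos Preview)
Your tensor-product reduction $U\cong U_1^{\otimes n}$ and the use of $xf=fx$ to force $\alpha=\beta$ parallel the paper's first step (the paper carries it out index by index rather than via a tensor factorisation, but the content is the same). Where you diverge is at the second step: the paper writes out $z_1f=fz_1$ and then asserts, for every summand $i$ \emph{separately}, both $q^{2\alpha_{i1}-\gamma_{i1}}=1$ and $\alpha_{i1}=0$, whence $\gamma_{i1}=\alpha_{i1}=0$ at once. You correctly suspect that this term-by-term reading is illegitimate, because the inhomogeneous tail $-\,mq^{2}x^{2}y^{m-1}$ produces a monomial $x_1^{\gamma_{i1}+2}y_1^{\alpha_{i1}-1}z_1^{\alpha_{i1}}$ that can coincide with a leading monomial coming from another term of $f$, and you propose instead a careful downward induction on the $y$-degree.

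The difficulty you flag is not merely bookkeeping; it is fatal. Your induction cannot be completed, because the first assertion of the theorem is false as stated. In the one-index algebra $U_1$ the element
\[
g\;=\;(q^{4}-1)\,x^{2}yz\;-\;q^{4}x^{4}
\]
is central whenever $q^{4}\neq1$, hence in particular when $q$ is not a root of unity. Indeed, $z\,x^{2}yz=q^{-2}x^{2}(q^{2}yz-q^{2}x^{2})z=x^{2}yz^{2}-x^{4}z$ and $z\,x^{4}=q^{-4}x^{4}z$ give $zg=(q^{4}-1)x^{2}yz^{2}-q^{4}x^{4}z=gz$; the checks $yg=gy$ and $xg=gx$ are equally short. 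Thus $Z(U_1)\supsetneq K$, and in your recursion the case $N=1$ produces precisely this nontrivial solution $P_1=cx^{2}$, $P_0=e_0-\tfrac{cq^{4}}{q^{4}-1}x^{4}$ rather than forcing $P_1=0$. So neither your argument nor the paper's can close here. Your treatment of the second claim---that $x_i^{l}$ is central when $q^{l}=1$---is correct and coincides with the paper's verification.
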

\begin{proof}

        Let $ f=\sum_{i=1}^{t}r_ix_1^{\gamma_{i1}}\cdots x_n^{\gamma_{in}}y_1^{\alpha_{i1}}\cdots y_n^{\alpha_{in}}z_1^{\beta_{i1}}\cdots z_n^{\beta_{in}}
    \in Z(U)$, $ \gamma_{ij},\alpha_{ij},\beta_{ij}\geq 0 $, $ j=1,\dots,n $. Then $ x_1f=fx_1 $, and hence
    \begin{center}
    $ \sum_{i=1}^{t}r_ix_1^{\gamma_{i1}+1}\cdots x_n^{\gamma_{in}}y_1^{\alpha_{i1}}\cdots y_n^{\alpha_{in}}z_1^{\beta_{i1}}\cdots z_n^{\beta_{in}}=
    \sum_{i=1}^{t}r_iq^{\alpha_{i1}-\beta_{i1}}x_1^{\gamma_{i1}+1}\cdots x_n^{\gamma_{in}}y_1^{\alpha_{i1}}\cdots y_n^{\alpha_{in}}z_1^{\beta_{i1}}
    \cdots z_n^{\beta_{in}} $.
    \end{center}
    Thus, $ 1-q^{\alpha_{i1}-\beta_{i1}}=0 $, so $ \alpha_{i1}=\beta_{i1} $. Repeating this for $ x_2,\dots,x_n $ we conclude
    that $ \alpha_{ij}=\beta_{ij} $, $ j=1,\dots,n $. Therefore,
    $ f=\sum_{i=1}^{t}r_ix_1^{\gamma_{i1}}\cdots x_n^{\gamma_{in}}y_1^{\alpha_{i1}}\cdots y_n^{\alpha_{in}}\cdots z_n^{\alpha_{in}} $.
    But $ fz_1=z_1f $, then
    \begin{center}
    $ \sum_{i=1}^{t}r_ix_1^{\gamma_{i1}}\cdots x_n^{\gamma_{in}}y_1^{\alpha_{i1}}\cdots y_n^{\alpha_{in}}z_1^{\alpha_{i1}+1}\cdots z_n^{\alpha_{in}}=$
    $\sum_{i=1}^{t}r_iq^{2\alpha_{i1}-\gamma_{i1}}x_1^{\gamma_{i1}}\cdots x_n^{\gamma_{in}}y_1^{\alpha_{i1}}\cdots$ $ y_n^{\alpha_{in}}z_1^{\alpha_{i1}+1}
    \cdots z_n^{\alpha_{in}}+\sum_{i=1}^{t}\alpha_{i1}r_iq^{2\alpha_{i1}-\gamma_{i1}+2}x_1^{\gamma_{i1}+2}\cdots x_n^{\gamma_{in}}y_1^{\alpha_{i1}-1}
    \cdots y_n^{\alpha_{in}}z_1^{\alpha_{i1}}\cdots z_n^{\alpha_{in}} $,
    \end{center}
    so necessarily $ 1-q^{2\alpha_{i1}-\gamma_{i1}}=0 $ and
    $ \alpha_{i1}r_iq^{2\alpha_{i1}-\gamma_{i1}+2}=0 $, whence $ 2\alpha_{i1}=\gamma_{i1} $ and $ \alpha_{i1}=0 =\gamma_{i1}$, so
    $ f=\sum_{i=1}^{t}r_ix_2^{\gamma_{i2}}\cdots x_n^{\gamma_{in}}y_2^{\alpha_{i2}}\cdots y_n^{\alpha_{in}}z_2^{\alpha_{i2}}\cdots z_n^{\alpha_{in}}
    $; repeating this for $ z_2,\dots,z_n $ then we obtain that $ f=r\in K $. Therefore, $ K\subseteq Z(U)\subseteq K $.

    If $ q^l=1 $, then $ y_jx_i^l=x_i^ly_j $, $ z_jx_i^l=x_i^lz_j $, and $ y_ix_i^l=q^lx_i^ly_i=x_i^ly_i $, $ z_ix_i^l=q^{-l}x_i^lz_i=x_i^lz_i $,
    for $ 1\leq i\leq n $. Thus, $ K[x_1^l,\dots,x_n^l]\subseteq Z(U) $.
\end{proof}


\subsection{Dispin Algebra $ \mathcal{U}(osp(1,2)) $}
It is generated by \noindent $ x,y,z $ over $ K $ satisfying the relations:
\begin{align*}
yx&=xy-x,&
zx&=-xz+y,&
zy&=yz-z.\\
\end{align*}
Then, $ \mathcal{U}(osp(1,2)) $ is a skew $ PBW $ extension of $ K $, $ \mathcal{U}(osp(1,2))\cong
\sigma(K)\langle x,y,z\rangle $. Using the defining relation of the algebra $ \mathcal{U}(osp(1,2))
$ we get the following preliminary result.
\begin{lemma}
    For $ n,m \geq 1$,
    \begin{enumerate}
        \item $ yx^{n}=x^{n}y-nx^{n} $.
        \item $ y^{n}x=x(y-1)^{n} $.
        \item $ y^{n}x^{m}=x^{m}(y-m)^{n} $.
        \item $ z^{n}x=(-1)^{n}xz^{n}-\frac{n}{2}z^{n-1}+\left( \frac{1+(-1)^{n+1}}{2}\right)yz^{n-1} $
        \begin{enumerate}
            \item[4.1] If $ n=2l  $, $ z^{2l}x=xz^{2l}-lz^{2l-1} $.
            \item[4.2] If $ n=2l+1 $, $ z^{2l+1}x=-xz^{2l+1}+yz^{2l}-nz^{2l} $.
        \end{enumerate}
        \item $ zy^{n}=(y-1)^{n}z $.
        \item $ z^{n}y=(y-n)z^{n} $.
        \item $ z^{n}y^{m}=(y-n)^{m}z^{n} $.
    \end{enumerate}
\end{lemma}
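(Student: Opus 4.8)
The plan is to obtain all seven identities as purely formal consequences of the three defining relations $yx=xy-x$, $zx=-xz+y$, $zy=yz-z$, each one by a short induction on the exponent; no hypothesis on $K$ is needed for the lemma itself.

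I would first handle the three identities relating a power of $x$ to $y$. Identity 1, $yx^{n}=x^{n}y-nx^{n}$, is an induction on $n$: from $yx^{n+1}=(yx^{n})x=(x^{n}y-nx^{n})x=x^{n}(yx)-nx^{n+1}=x^{n}(xy-x)-nx^{n+1}$ everything collapses to $x^{n+1}y-(n+1)x^{n+1}$. Identity 2, $y^{n}x=x(y-1)^{n}$, is an induction on $n$ in which one pushes a single $y$ past $x$ through $yx=x(y-1)$ and then uses that powers of $y$ commute. Identity 3 is then immediate, since Identity 1 rephrased reads $yx^{m}=x^{m}(y-m)$, whence $y^{n}x^{m}=y^{n-1}(yx^{m})=y^{n-1}x^{m}(y-m)$ and an induction on $n$ inside $K[y]$ closes it. The three identities relating a power of $z$ to $y$ go the same way: $zy^{n}=(y-1)^{n}z$ by induction on $n$, pushing one $z$ rightward through $zy=(y-1)z$; $z^{n}y=(y-n)z^{n}$ by induction on $n$, using $z(y-n)=(y-1-n)z$ at the inductive step; and $z^{n}y^{m}=(y-n)^{m}z^{n}$ by induction on $m$, stripping one $y$ off the right at a time and invoking Identity 6 each time.

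The delicate case, and the one where I expect essentially all of the bookkeeping to live, is Identity 4, the commutation of a power of $z$ past $x$. Because the relation $zx=-xz+y$ carries the sign $-1$, the normal form of $z^{n}x$ genuinely differs according to the parity of $n$, which is exactly why the statement splits into 4.1 and 4.2. The clean route is to compute the ``square'' relation first, by hand: $z^{2}x=z(zx)=z(-xz+y)=-(zx)z+zy=-(-xz+y)z+(yz-z)=xz^{2}-z$, so that $z^{2}$ commutes with $x$ up to a single lower-degree term, with neither a sign nor a $y$ appearing. The even case $z^{2l}x=xz^{2l}-lz^{2l-1}$ then follows by induction on $l$ via $z^{2l}x=z^{2(l-1)}(z^{2}x)=(z^{2(l-1)}x)z^{2}-z^{2l-1}$, and the odd case $z^{2l+1}x=z(z^{2l}x)=(zx)z^{2l}-lz^{2l}$ drops out by one further application of $zx=-xz+y$. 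The compact formula displayed in Identity 4 is only the device that merges 4.1 and 4.2, the factors $(-1)^{n}$ and $\frac{1+(-1)^{n+1}}{2}$ switching between the two normal forms; once 4.1 and 4.2 are established it follows by inspection. Apart from keeping this even/odd ledger straight, every step of the lemma is a one-line telescoping induction.
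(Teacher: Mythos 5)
Your argument is correct and is exactly the ``direct computation'' the paper has in mind --- the authors explicitly omit the proofs of these lemmas, so there is nothing to compare against beyond the intended telescoping inductions, all of which you carry out correctly (including the key step $z^{2}x=xz^{2}-z$ and the passage $z(y-n)=(y-1-n)z$ for item 6).

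One point deserves flagging rather than the phrase ``follows by inspection.'' Your (correct) odd-case computation gives $z^{2l+1}x=-xz^{2l+1}+yz^{2l}-lz^{2l}$, whereas sub-item 4.2 as printed has coefficient $-n=-(2l+1)$ and the compact formula in item 4 has coefficient $-\tfrac{n}{2}$; these three expressions are mutually inconsistent, and the printed ones fail already at $n=1$, where $zx=-xz+y$ has no term in $z^{0}$ but both printed formulas produce one. So the compact formula does not follow by inspection from 4.1 and 4.2 as stated: the correct coefficient of $z^{n-1}$ is $-\lfloor n/2\rfloor$ (equivalently $-l$ with $n=2l$ or $n=2l+1$), and your proof in fact establishes this corrected version of items 4 and 4.2. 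The discrepancy is harmless downstream, since the centrality proof for $4x^{2}z^{2}-y^{2}-2xz-y$ only uses $z^{2}x=xz^{2}-z$, which all versions agree on; but your write-up should state explicitly that it proves the corrected coefficients rather than the printed ones.
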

\begin{theorem}
    The element  $ f=4x^{2}z^{2}-y^{2}-2xz-y $ is central in $ \mathcal{U}(osp(1,2)) $ and $ K[f]\subseteq Z(\mathcal{U}(osp(1,2))) $.
\end{theorem}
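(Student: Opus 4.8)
The plan is to use that $\mathcal{U}(osp(1,2))=\sigma(K)\langle x,y,z\rangle$ is generated as a $K$-algebra by $x$, $y$, $z$, so that an element is central if and only if it commutes with each of these three generators. Thus it suffices to check $xf=fx$, $yf=fy$ and $zf=fz$ for $f=4x^2z^2-y^2-2xz-y$. Once $f$ is known to be central, every element of $K[f]$ is a polynomial in the central element $f$ and is therefore itself central, so $K[f]$ is a commutative $K$-subalgebra contained in $Z(\mathcal{U}(osp(1,2)))$, which is the second assertion.

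For each of the three identities I would bring both products into the PBW normal form $\sum c_{ijk}\,x^iy^jz^k$ by repeatedly applying the commutation rules of the preceding lemma together with the defining relations, and then compare coefficients. The check $yf=fy$ reduces to $y\,x^2z^2=x^2z^2\,y$ and $y\,xz=xz\,y$: using $yx^2=x^2y-2x^2$ and $yz^2=z^2y+2z^2$ (items 1 and 6 of the lemma) one gets $y\,x^2z^2=(x^2y-2x^2)z^2=x^2(z^2y+2z^2)-2x^2z^2=x^2z^2\,y$ and $y\,xz=(xy-x)z=x(zy+z)-xz=xz\,y$, while the monomials $y^2$ and $y$ of $f$ commute with $y$ trivially. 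The check $xf=fx$ is immediate on the left, and on the right requires normalizing $x^2z^2x$, $y^2x$ and $xzx$ via $z^2x=xz^2-z$ (item 4.1 with $l=1$), $y^2x=xy^2-2xy+x$ (item 2 with $n=2$) and $zx=-xz+y$; the defect monomials $\pm 4x^2z$, $\mp 2xy$ and $\pm x$ produced by the four summands of $f$ then cancel in pairs.

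The only delicate computation is $zf=fz$. Here one must move the leftmost $z$ past $x^2z^2$: the relations $zx=-xz+y$ and $yx=xy-x$ give $zx^2=x^2z-x$, hence $z\,x^2z^2=(x^2z-x)z^2=x^2z^3-xz^2$; one also uses $zy^2=y^2z-2yz+z$ (item 5 with $n=2$) and $zxz=-xz^2+yz$. Substituting into $zf=4z\,x^2z^2-zy^2-2zxz-zy$ and collecting terms yields $zf=4x^2z^3-y^2z-2xz^2-yz$, which is precisely $fz$. I expect the bookkeeping of the lower-degree ``defect'' terms created each time $z$ is pushed past $x$ or $y$ — the $-xz^2$ inside $z\,x^2z^2$, the $-2yz+z$ inside $zy^2$, and the $2xz^2-2yz$ coming from $-2zxz$ — and the verification that these cancel against one another to be the heart of the argument; the specific coefficients $4,-1,-2,-1$ of $f$ are exactly what forces this cancellation, which explains the particular shape of the Casimir element.
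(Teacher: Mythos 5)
Your proposal is correct and follows essentially the same route as the paper: verify $xf=fx$, $yf=fy$, $zf=fz$ by normalizing with the commutation identities of the preceding lemma (the same identities $z^2x=xz^2-z$, $zx^2=x^2z-x$, $zy^2=y^2z-2yz+z$, etc.), and then observe that polynomials in a central element are central. The explicit cancellations you describe match the paper's computation (your version of $yf=fy$ is in fact slightly cleaner, avoiding a harmless sign typo present in the paper's display).
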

\begin{proof}
    Since $ fx=4x^{2}z^{2}x-y^{2}x-2xzx-yx=4x^{2}(xz^{2}-z)-x(y-1)^2-2x(-xz+y)-(xy-x)=4x^3z^2-4x^2z-xy^2+2xy-x+2x^2z-2xy-xy+x=4x^{3}z^{2}-xy^{2}-2x^2z-xy=xf $.
    $ yf=4yx^{2}z^{2}-y^{3}-2yxz-y^2=4(x^{2}y-2x^2)z^2-y^3-2(xy-x)z-y^2=4x^2yz^2-8x^2z^2-y^3-2xyz-2xz-y^2 $. Whereas
    $ fy=4x^{2}z^{2}y-y^{3}-2xzy-y^2=4x^2(yz^2-2z^2)-y^3-2x(yz-z)-y^2=4x^2yz^2-8x^2z^2-y^3-2xyz-2xz-y^2 $. Thus, $ yf=fy $.
    $zf=4zx^{2}z^{2}-zy^{2}-2zxz-zy=4(x^2z-x)z^2-(y^2z-2yz+z)-2(-xz+y)z-(yz-z)=4x^2z^3-4xz^2-y^2z+2yz-z+2xz^2-2yz-yz+z=4x^{2}z^{3}-y^{2}z-2xz^2-yz=fz   $. Therefore, $ f\in Z(\mathcal{U}(osp(1,2))) $.
\end{proof}


\subsection{$ q- $Heisenberg algebra $ H_{n}(q) $}
This algebra is generated by  $ 3n $ variables, $ x_{1},\dots,x_{n} $, $ y_{1},\dots,y_{n} $, $
z_{1},\dots,z_{n} $ with relations:
\begin{align*}
x_{j}x_{i}=&x_{i}x_{j},&y_{j}y_{i}=&y_{i}y_{j},&z_{j}z_{i}=&z_{i}z_{j}, &i\neq &j.\\
y_{j}x_{i}=&x_{i}y_{j},&z_{i}y_{j}=&y_{j}z_{i},&z_{j}x_{i}=&x_{i}z_{j}& 1\leq &i<j\leq n,\\
y_{i}x_{i}=&qx_{i}y_{i},&z_{i}y_{i}=&qy_{i}z_{i},&z_{i}x_{i}=&q^{-1}x_{i}z_{i}+y_{i},
\end{align*}
with $q\in K-\{0\}$. Note that
\begin{align*}
H_n(q) \cong \sigma(K)\langle
x_1,\dots,x_n;y_1,\dots,y_n;z_1,\dots,z_n\rangle \cong \sigma(K[y_1,\dotsc,y_n])\langle
x_1,\dotsc,x_n;z_1,\dotsc,z_n\rangle.
\end{align*}

\begin{theorem} Let $ C_{i}=(q^{2}-1)x_{i}y_{i}z_{i}-y_{i}^{2} $, for $ i=i,\dots,n $. Then
    \begin{enumerate}
        \item If $ q $ is not a root of unity, then the elements $ C_i $ are central,   ${1\leq i\leq n} $,
        and
        $K[ C_{i} ]_{1\leq i\leq n}\subseteq Z(H_{n}(q))$.
        \item If $ q $ is a root of unity of degree $ l $, the following elements are central, $ C_{i},x_{i}^{l},y_{i}^{l},z_{i}^{l} $, $ i=1,\dots,n $, and
        $K[ C_{i},x_{i}^{l},y_{i}^{l},z_{i}^{l} ]_{1\leq i\leq n}\subseteq Z(H_{n}(q))$.
    \end{enumerate}
\end{theorem}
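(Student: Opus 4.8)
The plan is to verify directly that each proposed element commutes with every generator $x_j,y_j,z_j$ of $H_n(q)$, after a reduction to a single index. Each candidate — the element $C_i=(q^{2}-1)x_iy_iz_i-y_i^{2}$ and, in the second part, $x_i^{l},y_i^{l},z_i^{l}$ — involves only the variables $x_i,y_i,z_i$, and the defining relations show that any two generators carrying distinct indices commute; so it is enough to work inside the subalgebra generated by $x:=x_i$, $y:=y_i$, $z:=z_i$, subject to $yx=qxy$, $zy=qyz$, $zx=q^{-1}xz+y$, and to check centrality against $x,y,z$ only. Inside this subalgebra I would first establish, by a short induction on $m$, the straightening rules
\begin{align*}
y^{m}x^{k}&=q^{mk}x^{k}y^{m},\qquad z^{m}y^{k}=q^{mk}y^{k}z^{m},\\
zx^{m}&=q^{-m}x^{m}z+\gamma_{m}\,x^{m-1}y,\qquad z^{m}x=q^{-m}xz^{m}+\gamma_{m}\,yz^{m-1},\\
xyzx&=x^{2}yz+xy^{2},\qquad zxy=xyz+y^{2},
\end{align*}
where $\gamma_{m}:=q^{1-m}\bigl(1+q^{2}+\cdots+q^{2(m-1)}\bigr)$; the inductive step for the two rules in the second line produces a fresh $y$, which is then pushed past the remaining $x$'s (resp. $z$'s) using $yx=qxy$ (resp. $zy=qyz$), and the two one-step identities in the third line are immediate from $zx=q^{-1}xz+y$ and $yx=qxy$. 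This is precisely the kind of routine lemma the paper leaves to the reader.

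Granting these identities, the centrality of $C_i$ is a short computation. For commutation with $x$,
\begin{align*}
C_ix&=(q^{2}-1)\,xyzx-y^{2}x=(q^{2}-1)(x^{2}yz+xy^{2})-q^{2}xy^{2}\\
&=(q^{2}-1)x^{2}yz-xy^{2}=xC_i,
\end{align*}
using $xyzx=x^{2}yz+xy^{2}$ and $y^{2}x=q^{2}xy^{2}$. For commutation with $y$, both $C_iy$ and $yC_i$ collapse to $(q^{2}-1)q\,xy^{2}z-y^{3}$ via $zy=qyz$ and $yx=qxy$. For commutation with $z$, $zC_i=(q^{2}-1)(xyz^{2}+y^{2}z)-q^{2}y^{2}z=(q^{2}-1)xyz^{2}-y^{2}z=C_iz$, using $zxy=xyz+y^{2}$ and $zy^{2}=q^{2}y^{2}z$. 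This argument never uses whether $q$ is a root of unity, so the $C_i$ are central unconditionally; that settles part (1) and yields the $C_i$ in part (2).

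For part (2), suppose $q$ is a primitive $l$-th root of unity. Then $q^{l}=1$, so the first line of the lemma gives $y^{l}x^{k}=x^{k}y^{l}$ and $z^{l}y^{k}=y^{k}z^{l}$ for all $k$ (hence $y_i^{l}$ commutes with $x_i$, and $z_i^{l}$ with $y_i$, and $x_i^{l}$ with $y_i$); moreover $1+q^{2}+\cdots+q^{2(l-1)}=0$, so $\gamma_{l}=0$ and the second line gives $zx^{l}=x^{l}z$ and $z^{l}x=xz^{l}$ (hence $x_i^{l}$ and $z_i^{l}$ both commute with $z_i$ and $x_i$). Combining this with the commutations among distinct indices shows $x_i^{l},y_i^{l},z_i^{l}\in Z(H_n(q))$ for every $i$, and together with part (1) we conclude $K[C_i,x_i^{l},y_i^{l},z_i^{l}]_{1\le i\le n}\subseteq Z(H_n(q))$.

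The main obstacle is bookkeeping rather than anything conceptual: keeping track of the coefficient $\gamma_{m}$ through the induction for $zx^{m}$ and $z^{m}x$, and noting that $1+q^{2}+\cdots+q^{2(l-1)}$ fails to vanish precisely when $l=2$, i.e. $q=-1$ — where in fact $z_ix_i^{2}=x_i^{2}z_i-2x_iy_i\neq x_i^{2}z_i$, so $x_i^{2}$ is not central and the statement must be read with $l\ge 3$ (or $q=-1$ handled separately). Everything else in the argument is forced by the three defining relations.
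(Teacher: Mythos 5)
Your verification of the centrality of $C_i$ is correct and is essentially the computation the paper gives: commute $C_i$ with each of $x_i,y_i,z_i$ using the one-step relations (your identities $xyzx=x^2yz+xy^2$ and $zxy=xyz+y^2$ are exactly the intermediate steps in the paper's chain of equalities), and dismiss the cross-index generators as commuting. You are also right that the argument is independent of whether $q$ is a root of unity. Where you go beyond the paper is part (2): the paper's proof stops after establishing $y_iC_i=C_iy_i$ and never verifies that $x_i^l,y_i^l,z_i^l$ are central, whereas you supply the needed power-commutation formulas $zx^{m}=q^{-m}x^{m}z+\gamma_{m}x^{m-1}y$ and $z^{m}x=q^{-m}xz^{m}+\gamma_{m}yz^{m-1}$ with $\gamma_{m}=q^{1-m}(1+q^{2}+\cdots+q^{2(m-1)})$, and these are correct (they check out at $m=1,2$ and the inductive step $\gamma_{m+1}=q^{-m}+q\gamma_{m}$ closes). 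Your observation about $l=2$ is a genuine and worthwhile catch: for $q=-1$ one has $\gamma_{2}=-2\neq 0$, so $z_ix_i^{2}=x_i^{2}z_i-2x_iy_i$ and $x_i^{2}$ (likewise $z_i^{2}$) is not central, so the theorem's part (2) as stated requires $l\geq 3$; the paper neither proves nor qualifies this case. In short: same method, but your write-up actually proves the second assertion and correctly flags the exceptional case the paper overlooks.
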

\begin{proof}
    It is clear that $ x_{j}C_{i}=C_{i}x_{j} $, $ y_{j}C_{i}=C_{i}y_{j} $, $ z_{j}C_{i}=C_{i}z_{j} $, for $  i\neq j $.
    $ C_{i}x_{i}= (q^{2}-1)x_{i}y_{i}z_{i}x_{i}-y_{i}^{2}x_{i}=(q^{2}-1)x_{i}y_{i}(q^{-1}x_{1}z_{i}+y_{i})-y_{i}qx_iy_i=q^{-1}
    (q^2-1)x_i(qx_iy_i)z_i+(q^2-1)x_iy_i^2-q^2x_iy_i^2=(q^2-1)x_i^2y_iz_i-x_iy_i^2=x_iC_i$. $ z_iC_{i}=(q^{2}-1)z_ix_{i}y_{i}z_{i}-z_iy_{i}^{2}=
    (q^2-1)(q^{-1}x_iz_i+y_i)y_iz_i-q^2y_i^2z_i =(q^2-1)q^{-1}x_i(qy_iz_i)z_i+(q^2-1)y_i^2z_i-q^2y_iz_i=(q^2-1)x_iy_iz_i^2-y_i^2z_i=C_iz_i$, and
    $ y_iC_i=(q^2-1)y_ix_iy_iz_i-y_i^3=(q^2-1)qx_iy_i^2z_i-y_i^3 $, whereas $ C_iy_i=(q^2-1)x_iy_iz_iy_i-y_i^3=(q^2-1)qx_iy_i^2z_i-y_i^3 $, so $ y_iC_i=C_iy_i
   $, and therefore, $ C_i $, $ 1\leq i\leq n $ are central elements in $ H_n(q) $.
\end{proof}


\subsection{The coordinate algebra of the quantum matrix space, $ M_{q}(2) $.}
This is the $ K- $algebra generated by the variables $ x,y,u,v $ satisfying the relations:
\begin{equation}\label{3com}
xu=qux,\ \ \ \ \ \ yu=q^{-1}uy,\ \ \ \ \ \ vu=uv,
\end{equation}
and
\begin{equation}\label{4com}
xv=qvx,\ \ \ \ \ \ \ vy=qyv,\ \ \ \ \ \ \ \ yx-xy=-(q-q^{-1})uv,
\end{equation}
with $ q\in K-\{0\} $.  Thus, $\mathcal{O}(M_q(2))\cong \sigma(K[u])\langle x,y,v\rangle$. Due to
the last relation in (\ref{4com}), we remark that it is not possible to consider
$\mathcal{O}(M_q(2))$ as a skew $PBW$ extension of $K$. This algebra can be generalized to $n$
variables, $\mathcal{O}_q(M_n(K))$, and coincides with the \textit{coordinate algebra of the
    quantum group $SL_q(2)$}, see \cite{Gomez-Torrecillas2} for more details.
\begin{lemma}
In $\mathcal{O}(M_q(2))$ the following relations hold:
\begin{enumerate}
    \item $ yx^{l}=x^{l}y-(q-q^{-1})(q^{2(l-1)}+q^{2(l-2)}+\dots+q^{2}+1)uvx^{l-1} $.
    In particular, if  $ q $ is a root of unity of degree $ l\geq 3 $,  then $ q^{2(l-1)}+q^{2(l-2)}+\dots+q^{2}+1=0 $ and $ yx^{l}=x^{l}y $.
    \item $ y^{l}x=xy^{l}-(q-q^{-1})(1+q^{-2}+\dots+q^{-2(l-2)}+q^{-2(l-1)})uvy^{l-1} $. Where $ 1+q^{-2}+\dots+q^{-2(l-2)}+q^{-2(l-1)}=0 $ if $ q $ is a root of unity of degree $ l $.
\end{enumerate}

\end{lemma}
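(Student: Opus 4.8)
The plan is to prove both identities by induction on $l$, using only the defining relations (\ref{3com}) and (\ref{4com}). The base case $l=1$ of each identity is exactly the commutation relation $yx-xy=-(q-q^{-1})uv$ (with the empty/one‑term sums $q^{2\cdot 0}=1$ and $q^{-2\cdot 0}=1$), so nothing needs to be checked there.

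The only auxiliary computation the induction requires is how the monomial $uv$ commutes past powers of $x$ and of $y$. From $xu=qux$ and $xv=qvx$ one gets $x^{l}u=q^{l}ux^{l}$ and $x^{l}v=q^{l}vx^{l}$, hence $x^{l}\,uv=q^{2l}\,uv\,x^{l}$; dually, from $yu=q^{-1}uy$ and $vy=qyv$ (so $yv=q^{-1}vy$) one gets $y\,uv=q^{-2}\,uv\,y$, and therefore $y\,uv\,y^{l-1}=q^{-2}\,uv\,y^{l}$. These two facts are all the ``quantum bookkeeping'' needed.

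For part (1), assume $yx^{l}=x^{l}y-(q-q^{-1})S_{l}\,uv\,x^{l-1}$ with $S_{l}:=\sum_{i=0}^{l-1}q^{2i}$. I would multiply on the right by $x$, write $x^{l}(yx)=x^{l+1}y-(q-q^{-1})x^{l}\,uv$ using the base relation, and then replace $x^{l}\,uv$ by $q^{2l}\,uv\,x^{l}$; collecting terms produces the coefficient $q^{2l}+S_{l}=S_{l+1}$, which is the geometric‑sum identity that closes the induction. Part (2) is entirely parallel: starting from $y^{l}x=xy^{l}-(q-q^{-1})T_{l}\,uv\,y^{l-1}$ with $T_{l}:=\sum_{i=0}^{l-1}q^{-2i}$, multiply on the \emph{left} by $y$, use $yx=xy-(q-q^{-1})uv$ together with $y\,uv\,y^{l-1}=q^{-2}\,uv\,y^{l}$, and observe $1+q^{-2}T_{l}=T_{l+1}$.

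Finally, the ``in particular'' clauses are immediate specializations: if $q$ is a primitive $l$‑th root of unity with $l\ge 3$ then $q^{2}\neq 1$, so $S_{l}=(q^{2l}-1)/(q^{2}-1)=0$ and the correction term vanishes, giving $yx^{l}=x^{l}y$; likewise $T_{l}=0$ under the same hypothesis, so $y^{l}x=xy^{l}$. There is no real obstacle here — the argument is a routine induction — and the only point that needs care is tracking the exact power of $q$ generated each time $uv$ is pushed past one of the variables, and then recognizing the resulting partial sum of $\sum q^{\pm 2i}$ as $S_{l+1}$ (resp.\ $T_{l+1}$).
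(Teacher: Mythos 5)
Your proof is correct. The paper omits the proof of this lemma entirely (it states beforehand that these auxiliary lemmas "can be realized by direct computations"), and your induction — pushing $uv$ past powers of the variables via $x^{l}uv=q^{2l}uv\,x^{l}$ and $y\,uv=q^{-2}uv\,y$, and closing with the geometric-sum recursions $S_{l+1}=q^{2l}+S_{l}$ and $T_{l+1}=1+q^{-2}T_{l}$ — is precisely the routine computation the authors had in mind, including the correct observation that the "in particular" clauses need $q^{2}\neq 1$.
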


With the previous relations it is possible proof the following theorem.
\begin{theorem}

    Let $ A=M_{q}(2) $, then
    \begin{enumerate}
        \item   $K[ xy-quv ]\subseteq Z(A)$ if $q$  is not a root of unity.
        \item $ x^{l},y^{l},xy-quv,u^{i}v^{j} $, with $ i+j=l $, are central elements if $ q $ is a root of unity of degree $ l
        $, and
        $K[x^{l},y^{l},xy-quv,u^{i}v^{j}]\subseteq Z(A).$
    \end{enumerate}
\end{theorem}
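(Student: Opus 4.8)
As in the previous subsections, the plan is to prove only the asserted inclusions by exhibiting each listed element as central, which amounts to checking that it commutes with the four generators $u,x,y,v$; since $Z(A)$ is a subalgebra, the inclusions $K[\,\cdot\,]\subseteq Z(A)$ then follow at once. Every verification is a direct computation with the defining relations (\ref{3com}), (\ref{4com}) and the commutation rules of the preceding Lemma, so I will only indicate the structure and the crucial cancellations rather than grind through each line.

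For part (1), write $w:=xy-quv$. First I would dispose of the easy generators: from $xu=qux$ and $yu=q^{-1}uy$ one gets $u(xy)=(xy)u$, and $vu=uv$ gives $u(uv)=(uv)u$, so $uw=wu$; symmetrically $xv=qvx$, $vy=qyv$ give $v(xy)=(xy)v$ and $v(uv)=(uv)v$ trivially, so $vw=wv$. The real content is commutation with $x$ and with $y$. Using $yx=xy-(q-q^{-1})uv$ together with $xu=qux$ and $xv=qvx$, I expect the identities
\begin{align*}
x(xy)-(xy)x = q^{2}(q-q^{-1})\,uvx, \qquad x(uv)-(uv)x = (q^{2}-1)\,uvx,
\end{align*}
so that $[x,w]=[x,xy]-q[x,uv]=\bigl(q^{3}-q-(q^{3}-q)\bigr)uvx=0$; the $y$-side is the mirror computation, yielding $y(xy)-(xy)y=-(q-q^{-1})uvy$ and $y(uv)-(uv)y=(q^{-2}-1)uvy$, whose combination again cancels. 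Hence $w\in Z(A)$ for every $q\neq0$.

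For part (2), assume $q$ is a root of unity of degree $l$. The element $x^{l}$ commutes with $x$ trivially and with $u,v$ because $x^{l}u=q^{l}ux^{l}=ux^{l}$ and $x^{l}v=q^{l}vx^{l}=vx^{l}$; commutation with $y$ follows from item (1) of the Lemma, since the scalar $(q-q^{-1})(q^{2(l-1)}+\cdots+1)$ vanishes (the geometric sum equals $(q^{2l}-1)/(q^{2}-1)=0$ when $q^{2}\neq1$, while $q-q^{-1}=0$ when $q^{2}=1$), so $yx^{l}=x^{l}y$. The same argument with item (2) of the Lemma settles $y^{l}$. For $u^{i}v^{j}$ with $i+j=l$: it commutes with $u$ and $v$ since $u,v$ commute, and $x(u^{i}v^{j})=q^{i+j}u^{i}v^{j}x=u^{i}v^{j}x$, $y(u^{i}v^{j})=q^{-(i+j)}u^{i}v^{j}y=u^{i}v^{j}y$ because $q^{l}=1$. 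Combining this with part (1) gives $K[x^{l},y^{l},xy-quv,u^{i}v^{j}]\subseteq Z(A)$.

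The only place where care is needed is the cancellation in part (1): one must track precisely the powers of $q$ picked up when moving $u$ and $v$ past $x$ (resp.\ $y$), so that the lower-order terms $uvx$ (resp.\ $uvy$) arising from $[x,xy]$ and from $[x,quv]$ cancel exactly; everything else is routine bookkeeping. I would also stress that only the inclusions $\subseteq$ are claimed here: establishing equality when $q$ is a primitive $l$-th root of unity would require, in addition, an upper bound on $Z(A)$ obtained by passing to $Gr(A)$ and arguing in the style of Lemma~\ref{quasi}, which is not attempted.
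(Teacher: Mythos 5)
Your proposal is correct and follows essentially the same route as the paper: a direct verification that each listed element commutes with the generators $u,x,y,v$, using the relation $xyx=x^2y-(q-q^{-1})q^2uvx$ (resp.\ its $y$-analogue) for $xy-quv$, and the lemma's formulas $yx^l=x^ly$, $y^lx=xy^l$ together with $x^lu=q^lux^l$, etc., for the root-of-unity case. The only differences are cosmetic — you organize part (1) through the commutators $[x,xy]$ and $[x,uv]$ rather than expanding $xf$ and $fx$ in full, and you explicitly cover the $l=2$ case via $q-q^{-1}=0$, which the paper's lemma glosses over by assuming $l\geq 3$.
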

\begin{proof}
    Let $ f=xy-quv $, then
    $ fu=(xy-quv)u=xyu-quvu=xq^{-1}uy-qu^2v=q^-1quxy-qu^2v=uxy-qu^{2}v=uf $. $ vf=v(xy-quv)=vxy-qvuv=vxy-quv^2 $, and
    $ fv=(xy-quv)v=xyv-quv^2=xq^{-1}vy-quv^2=q^{-1}qvxy-quv^2=vxy-quv^2 $, so $ vf=fv $. Also, $ xf=x^2y-qxuv=x^2y-q(qux)v=x^2y-q^3uvx $,
    $ fx=xyx-quvx=x(xy-(q-q^{-1})uv)-quvx=x^2y-(q-q^{-1})xuv-quvx=x^2y-(q-q^{-1})q^2uvx=x^2y-q^3uvx+quvx-quvx=x^2y-q^3uvx=xf $; $ fy=xy^2-quvy $, $ yf=yxy-qyuv=xy^2-(q-q^{-1})uvy-q^{-1}uvy=xy^2-quvy+q^{-1}uvy-q^{-1}uvy=xy^2-quvy=fy $.\\
    For the second part it is clear from the previous relations that $ yx^l=x^ly $ and $ y^lx=xy^l $ if $ q $ is a root of unity of degree $ l $.  Also, $ y^lu=q^{-l}uy^l=uy^l $, $ y^lv=q^{-l}vy^l=vy^l $ and $ x^lu=q^{l}ux^l=ux^l $, $ x^lv=q^{l}vx^l=vx^l $, thus $ x^l $, $y^l$ are central elements in $ A $. For $ u^{i}v^{j} $ with $ i+j=l $ we have, $ xu^iv^j=q^{i}u^ixv^j=q^iq^ju^iv^jx=u^iv^jx $, $ yu^iv^j=q^{-i}u^iyv^j=q^{-i}q^{-j}u^iv^jy=u^iv^jy $, so $ u^iv^{j} $ are central for $ i+j=l $.
\end{proof}


\subsection{Quadratic algebras in 3 variables}
For quadratic algebras in 3 variables the relations are homogeneous of degree 2. More exactly, a
quadratic algebra in 3 variables $\mathcal{A}$ is a $K$-algebra generated by $x,y,z$ subject to the
relations
\begin{align*}
yx & = xy+a_1xz+a_2y^2+a_3yz+\xi_1z^2,\\
zx & = xz+\xi_2y^2+a_5yz+a_6z^2,\\
zy & =yz+a_4z^2.
\end{align*}
If $ a_1=a_4=0 $ we obtain the relations
\begin{align*}
yx & = xy+a_2y^2+a_3yz+\xi_1z^2,\\
zx & = xz+\xi_2y^2+a_5yz+a_6z^2,\\
zy & =yz.
\end{align*}
One can check that $ \mathcal{A}_1\cong \sigma(K[y,z])\langle x\rangle $. If $ a_3=a_5=0 $, which
implies $ a_2=a_6=0 $, and thus, there is a family of algebras with relations
\begin{align*}
yx & = xy+a_1xz+\xi_1z^2,\\
zx & = xz,\\
zy & =yz+a_4z^2.
\end{align*}
These algebras are skew $ PBW $ extensions of the form $ \mathcal{A}_2\cong\sigma( K[x,z])\langle y\rangle $.
If $ a_1=a_3=\xi_1=0 $, then $ a_4=a_5=a_6=0 $ and thus there is a family of algebras with relations
\begin{align*}
yx & = xy+a_2y^2,\\
zx & = xz+\xi_2y^2,\\
zy & =yz.
\end{align*}
These algebras are skew $ PBW $ extensions of the form $ \mathcal{A}_3\cong\sigma( K[x,y])\langle z\rangle $.

\begin{lemma} Assume that in the previous algebra $ \mathcal{A}_2 $, $ a_1=a $, $ \xi_1=b $ and $ a_4=c $. Then for $ m\geq 1 $
    \begin{enumerate}
        \item $ yx^m=x^my+max^mz+mbx^{m-1}z^2 $.
        \item $ zy^m=\sum_{i=0}^{m}\dfrac{m!}{(m-i)!}c^{i}y^{m-i}z^{1+i} $.
        \item $ z^my=yz^m+mcz^{m+1} $.
    \end{enumerate}
\end{lemma}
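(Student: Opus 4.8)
The plan is to prove each of the three formulas by induction on $m$, the only ingredients being the defining relations of $\mathcal{A}_2$, namely $yx=xy+axz+bz^2$, $zx=xz$, and $zy=yz+cz^2$. In every case the base case $m=1$ is immediate: for (1) and (3) it is literally a defining relation, and for (2) one checks that $\sum_{i=0}^{1}\frac{1!}{(1-i)!}c^{i}y^{1-i}z^{1+i}=yz+cz^{2}=zy$.

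For (1), assume $yx^{m}=x^{m}y+ma\,x^{m}z+mb\,x^{m-1}z^{2}$ and multiply on the right by $x$, writing $yx^{m+1}=(yx^{m})x=x^{m}(yx)+ma\,x^{m}(zx)+mb\,x^{m-1}z(zx)$. Substituting $yx=xy+axz+bz^{2}$ and then repeatedly using the commutation $zx=xz$ to move every $x$ to the left (so that $x^{m}z^{2}x=x^{m}z(zx)=x^{m}z(xz)=x^{m}(zx)z=x^{m}(xz)z=x^{m+1}z^{2}/x \cdot x$, etc.) collapses the right-hand side exactly to $x^{m+1}y+(m+1)a\,x^{m+1}z+(m+1)b\,x^{m}z^{2}$. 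For (3), assume $z^{m}y=yz^{m}+mc\,z^{m+1}$ and compute $z^{m+1}y=z(z^{m}y)=(zy)z^{m}+mc\,z^{m+2}=(yz+cz^{2})z^{m}+mc\,z^{m+2}=yz^{m+1}+(m+1)c\,z^{m+2}$, using only that $z$ commutes with itself.

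For (2), assume the stated identity for $m$ and write $zy^{m+1}=(zy^{m})y=\sum_{i=0}^{m}\frac{m!}{(m-i)!}c^{i}y^{m-i}\bigl(z^{1+i}y\bigr)$. Applying part (3) in the form $z^{1+i}y=yz^{1+i}+(1+i)c\,z^{2+i}$ gives $y^{m-i}(z^{1+i}y)=y^{m+1-i}z^{1+i}+(1+i)c\,y^{m-i}z^{2+i}$, so that $zy^{m+1}$ splits into two sums; reindexing the second one by $j=i+1$, the total coefficient of $c^{j}y^{m+1-j}z^{1+j}$ becomes $\frac{m!}{(m-j)!}+j\,\frac{m!}{(m+1-j)!}$. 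One then invokes the elementary falling-factorial recurrence $\frac{m!}{(m-j)!}+j\,\frac{m!}{(m+1-j)!}=\frac{(m+1)!}{(m+1-j)!}$ (the recursion satisfied by the number of $j$-permutations of an $m$-element set), and checks the boundary indices $j=0$ and $j=m+1$ directly; this produces the formula with $m$ replaced by $m+1$.

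I expect the only non-routine point to be the bookkeeping in the inductive step of (2): one must reindex the doubled sum correctly and recognize the combinatorial identity $\frac{m!}{(m-j)!}+j\,\frac{m!}{(m+1-j)!}=\frac{(m+1)!}{(m+1-j)!}$ so that the two contributions merge into the single expected coefficient. Parts (1) and (3) are straightforward once one is careful to commute $x$ past $z$ (resp.\ $z$ past $z$) using the given relations.
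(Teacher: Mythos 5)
Your proof is correct: all three inductions check out, including the reindexing and the falling-factorial identity $\frac{m!}{(m-j)!}+j\,\frac{m!}{(m+1-j)!}=\frac{(m+1)!}{(m+1-j)!}$ in part (2), and the boundary cases $j=0$, $j=m+1$. The paper omits the proof entirely (it declares these lemmas to be "direct computations"), and your induction on $m$ from the defining relations is exactly that intended argument; the only blemish is the garbled fragment "$x^{m+1}z^{2}/x\cdot x$", which should simply read $x^{m-1}z^{2}x=x^{m}z^{2}$ via $z^{2}x=xz^{2}$.
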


\begin{theorem}
    The quadratic algebra $ \mathcal{A}_2 $ generated by $ x,y,z $ with relations $ yx = xy+axz+bz^2$, $zx  = xz$, $zy  =yz+cz^2$, $ a,b,c\in K-\{0\} $ and $ ac<0 $ has trivial center.
\end{theorem}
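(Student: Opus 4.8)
The plan is to exploit the Ore-extension structure of $\mathcal{A}_2$ rather than to juggle the three generators symmetrically. Since $\mathcal{A}_2\cong\sigma(K[x,z])\langle y\rangle$ and, by Remark \ref{remark1.7}, $\sigma$ fixes $K$, $x$ and $z$, we have $\sigma=\mathrm{id}_{K[x,z]}$; writing $R:=K[x,z]$ and reading the defining relations through Proposition \ref{sigmadefinition}, we obtain $yr=ry+\delta(r)$ for $r\in R$, where $\delta$ is the $K$-derivation of $R$ with $\delta(x)=axz+bz^2$ and $\delta(z)=-cz^2$. Thus $\mathcal{A}_2=R[y;\delta]$ is a skew polynomial ring of derivation type over the polynomial ring $R$, and an immediate induction gives
\[
y^{k}r=\sum_{j=0}^{k}\binom{k}{j}\delta^{j}(r)\,y^{k-j},\qquad r\in R,\ k\ge0.
\]
(The three relations quoted in the preceding Lemma are exactly what one would need for a direct, coordinate-by-coordinate computation; the route below bypasses them.)

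First I would bound the $y$-degree of a central element. Let $0\neq f=\sum_{k=0}^{N}p_{k}y^{k}\in Z(\mathcal{A}_2)$ with $p_{k}\in R$ and $p_{N}\neq0$. If $N\ge1$, comparing the coefficient of $y^{N-1}$ on the two sides of $rf=fr$ and cancelling the common term $p_{N-1}r=rp_{N-1}$ yields $N\,p_{N}\,\delta(r)=0$ for every $r\in R$. Taking $r=z$ gives $N\,p_{N}\,(-cz^{2})=0$; since $R$ is a domain, $c\neq0$, $p_{N}\neq0$ and $\mathrm{char}\,K=0$, this is impossible, so $N=0$ and $f=p_{0}\in R$. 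Commuting $f$ with $y$ then gives $0=yf-fy=\delta(p_{0})$, so $f\in\ker\delta$.

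It remains to prove $\ker\delta=K$. Write $\delta=z\,D$ with $D:=(ax+bz)\,\partial_{x}-cz\,\partial_{z}$; since $R$ is a domain, $\ker\delta=\ker D$, and as $D$ preserves the grading of $R$ by total degree it suffices to show $\ker(D|_{R_{d}})=0$ for every $d\ge1$. Ordering the monomials of $R_{d}$ by decreasing $x$-exponent, one computes $D(x^{i}z^{d-i})=(ia-(d-i)c)\,x^{i}z^{d-i}+ib\,x^{i-1}z^{d-i+1}$; hence for a nonzero homogeneous $p=\sum_{i}c_{i}x^{i}z^{d-i}$, with $i_{0}$ the largest index satisfying $c_{i_{0}}\neq0$, the coefficient of $x^{i_{0}}z^{d-i_{0}}$ in $D(p)$ is $c_{i_{0}}\bigl(i_{0}a-(d-i_{0})c\bigr)$. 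The scalar $i_{0}a-(d-i_{0})c$ never vanishes: it equals $da\neq0$ if $i_{0}=d$, equals $-dc\neq0$ if $i_{0}=0$, and for $0<i_{0}<d$ the equality $i_{0}a=(d-i_{0})c$ would force $a/c>0$, contradicting $ac<0$ — this is the only point at which the sign condition is used. Therefore $D(p)\neq0$, so $\ker D=R_{0}=K$, and we conclude $Z(\mathcal{A}_2)=K$.

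There is no serious obstacle here; the single point to handle carefully is the degree-reduction step, where one must verify that after commuting $f$ with an element of $R$ it is precisely the $j=0$ term of the displayed sum that cancels against $p_{N-1}r$, leaving the correction $N\,p_{N}\,\delta(r)$ isolated. One could alternatively pass to $Gr(\mathcal{A}_2)$ via Theorem \ref{filteredskew} and argue there, but the direct computation is shorter.
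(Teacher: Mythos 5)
Your proof is correct, and it is organized quite differently from the paper's. The paper proceeds by brute force on a general element $f=\sum_i C_i x^{\alpha_{i1}}y^{\alpha_{i2}}z^{\alpha_{i3}}$: it first uses $zf=fz$ together with the explicit formula for $zy^m$ from the preceding Lemma to kill all powers of $y$, and then uses $yf=fy$ together with the formulas for $yx^m$ and $z^my$ to compare coefficients of the individual monomials $x^{\alpha_{i1}}z^{\alpha_{i3}+1}$ and $x^{\alpha_{i1}-1}z^{\alpha_{i3}+2}$, invoking $ac<0$ to force $\alpha_{i1}a-\alpha_{i3}c\neq 0$ unless $\alpha_{i1}=\alpha_{i3}=0$. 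You instead exploit the Ore structure $\mathcal{A}_2=K[x,z][y;\delta]$ with $\delta(x)=axz+bz^2$, $\delta(z)=-cz^2$: the binomial identity $y^kr=\sum_j\binom{k}{j}\delta^j(r)y^{k-j}$ lets you bound the $y$-degree by looking only at the coefficient of $y^{N-1}$ in $rf=fr$ (taking $r=z$), which is a cleaner degree-reduction than the paper's term-by-term cancellation; the problem then becomes the purely commutative statement $\ker\delta=K$, which you settle by factoring $\delta=zD$ and running a leading-term argument on the homogeneous components of $R=K[x,z]$. The sign hypothesis $ac<0$ enters both arguments at the same essential point, namely to guarantee $ia-jc\neq 0$ for $(i,j)\neq(0,0)$, but your version only needs it for the leading monomial, which sidesteps the (unaddressed in the paper) possibility that the correction term $x^{\alpha_{i1}-1}z^{\alpha_{i3}+2}$ attached to one index collides with the principal monomial of another. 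What the paper's route buys is uniformity with the rest of Section 3, where everything is done via explicit commutation lemmas; what yours buys is a shorter, more structural argument that isolates exactly where each hypothesis ($c\neq 0$, $\mathrm{char}\,K=0$, $ac<0$) is used, and that generalizes immediately to any derivation-type extension $R[y;\delta]$ over a commutative domain with $\delta\neq 0$.
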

\begin{proof}
        Let $ f(x,y,z)=\sum_{i=0}^{n}C_{i}x^{\alpha_{i1}}y^{\alpha_{i2}}z^{\alpha_{i3}} \in Z(\mathcal{A}_2)$, with $ \alpha_{ij}\in \NN $, $ j=1,2,3 $.
        Since $ zf=fz $, then $ \sum_{i=0}^{n}C_{i}x^{\alpha_{i1}}y^{\alpha_{i2}}z^{\alpha_{i3}+1}=\sum_{i=0}^{n}C_{i}x^{\alpha_{i1}}y^{\alpha_{i2}}z^{\alpha_{i3}+1}+
        \sum_{i=0}^{n}C_{i} \sum_{j=1}^{\alpha_{i2}}\frac{(\alpha_{i2})!}{(\alpha_{i2}-j)!}c^j   x^{\alpha_{i1}}y^{\alpha_{i2}-j}z^{\alpha_{i3}+1-j} $,
        so $ \sum_{i=0}^{n}C_{i} \sum_{j=1}^{\alpha_{i2}}\frac{(\alpha_{i2})!}{(\alpha_{i2}-j)!}c^j   x^{\alpha_{i1}}y^{\alpha_{i2}-j}z^{\alpha_{i3}+1-j}=0 $,
        but char$ (K) =0$, then $ C_i=0 $ for $ \alpha_{i2}\neq 0 $, i.e. $ f=\sum_{i=0}^{n}C_ix^{\alpha_{i1}}z^{\alpha_{i3}} $. Also,
        $ yf=fy $, this is $ \sum_{i=0}^{n}C_ix^{\alpha_{i1}}yz^{\alpha_{i3}}+C_i\alpha_{i1}ax^{\alpha_{i1}}z^{\alpha_{i3}+1}+
        C_i\alpha_{i1}bx^{\alpha_{i1}-1}z^{\alpha_{i3}+2}=\sum_{i=0}^{n}C_ix^\alpha_{i1}yz^{\alpha_{i3}}+C_i\alpha_{i3}cx^{\alpha_{i1}}z^{\alpha_{i3}+1} $. Thus,
        $ C_i(\alpha_{i1}a-\alpha_{i3}c)x^{\alpha_{i1}}z^{\alpha_{i3}+1}+C_i\alpha_{i1}bx^{\alpha_{i1}-1}z^{\alpha_{i3}+2}=0 $ for $ i=0,1,\dots,n $. Since
        $ \alpha_{i1}a-\alpha_{i3}c\leq0 $ or $ \alpha_{i1}a-\alpha_{i3}c\geq0 $,  then $ \alpha_{i1}=0 $, $ \alpha_{i3}=0 $ and $ f=c_0\in K $, therefore $ K\subseteq Z(\mathcal{A}_2)\subseteq K $.
\end{proof}
\noindent For $ \cA_3 $ it is not difficult to prove the following result.

\begin{theorem}
    The element $ \xi_2y-a_2z $ is central in the quadratic algebra $ \cA_3 $.
\end{theorem}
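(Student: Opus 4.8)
The plan is to verify directly that $f:=\xi_2 y-a_2 z$ commutes with each of the three generators $x,y,z$ of $\cA_3$; since these generate the algebra as a $K$-algebra, this is enough to conclude $f\in Z(\cA_3)$, and hence $K[f]\subseteq Z(\cA_3)$. First I would dispose of the two trivial cases: the defining relation $zy=yz$ says that $y$ and $z$ commute in $\cA_3$, so the $K$-linear combination $f=\xi_2 y-a_2 z$ visibly commutes with both $y$ and $z$, with no computation required.

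The only substantive step is to check $fx=xf$. I would expand $fx=\xi_2(yx)-a_2(zx)$ and substitute the two relations $yx=xy+a_2y^2$ and $zx=xz+\xi_2 y^2$, obtaining $\xi_2 xy+a_2\xi_2 y^2-a_2 xz-a_2\xi_2 y^2$. The two copies of $a_2\xi_2 y^2$ cancel, leaving $\xi_2 xy-a_2 xz=x(\xi_2 y-a_2 z)=xf$. This cancellation is precisely why the central element must have the shape $\xi_2 y-a_2 z$: moving $y$ past $x$ creates a $y^2$-term with coefficient $a_2$, moving $z$ past $x$ creates a $y^2$-term with coefficient $\xi_2$, and only the combination $\xi_2 y-a_2 z$ annihilates both simultaneously.

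There is essentially no obstacle for the statement as given: the verification is two or three lines of substitution. The point where care \emph{would} be needed is the strictly stronger question of whether $Z(\cA_3)$ is actually \emph{equal} to $K[\xi_2 y-a_2 z]$ (or larger); here the graded-ring technique of Theorem \ref{filteredskew} is of limited help, since $Gr(\cA_3)$ is the quasi-commutative skew $PBW$ extension with $yx=xy$, $zx=xz$, $zy=yz$, i.e. the commutative polynomial ring $K[x,y,z]$, whose center is everything, so $Gr(Z(\cA_3))$ alone does not pin down $Z(\cA_3)$. But for the claim stated — that $\xi_2 y-a_2 z$ is central — the direct computation above is complete.
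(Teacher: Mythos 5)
Your verification is correct and is exactly the direct computation the paper has in mind (the paper omits the proof entirely, remarking only that "it is not difficult to prove"): $f=\xi_2y-a_2z$ commutes with $y$ and $z$ trivially since $zy=yz$, and the $a_2\xi_2y^2$ terms cancel in $fx=\xi_2(xy+a_2y^2)-a_2(xz+\xi_2y^2)=xf$. Your closing remark about why this does not determine $Z(\cA_3)$ completely is also accurate and consistent with the paper only claiming $K[\xi_2y-a_2z]\subseteq Z(\cA_3)$ in its summary table.
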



\subsection{Witten's deformation of $\mathcal{U}(\mathfrak{sl}(2,K)$}
Witten introduced and studied a 7-parameter deformation of the universal enveloping
algebra $\mathcal{U}(\mathfrak{sl}(2,K))$ depending on a 7-tuple of parameters $\underline{\xi}=(\xi_1,\dotsc,\xi_7)$ and subject
to relations
\[
xz-\xi_1zx=\xi_2x,\ \ \ zy-\xi_3yz=\xi_4y,\ \ \ \ yx-\xi_5xy=\xi_6z^2+\xi_7z.
\]
The resulting algebra is denoted by $W(\underline{\xi})$. In \cite{Levandovskyy} it is assumed that
$\xi_1\xi_3\xi_5\neq 0$. Note that $W(\underline{\xi})\cong \sigma(\sigma(K[x])\langle
z\rangle)\langle y\rangle$.
The computation of the center of this algebra is in general a difficult task; however, in the
particular case of the \textit{conformal} \textit{algebra} considered in \cite{Levandovskyy}, the
center is trivial. This particular algebra is generated by $ x,y,z $ subject to relations
\begin{align*}
yx=&cxy+bz^2+z\\
zx=&\frac{1}{a}xz-\frac{1}{a}x\\
zy=&ayz+y
\end{align*}
with $ a,b,c\in K-\{0\} $.
    \begin{theorem}
        The conformal algebra has trivial center.
    \end{theorem}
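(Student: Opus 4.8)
The plan is to argue directly, just as in the proofs for the Woronowicz algebra and for $\cA_2$ above: view the conformal algebra $A$ as the iterated extension $\sigma(\sigma(K[x])\langle z\rangle)\langle y\rangle$, so that $A$ is a free left $K$-module on the standard monomials $x^iz^jy^k$; take $f\in Z(A)$, write it in this normal form, and squeeze it down to $K$ by using in turn $zf=fz$, $xf=fx$ and, if needed, $yf=fy$.

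First I would record the commutation identities that the computation needs, each obtained by a routine induction on $m$. From $zx=a^{-1}x(z-1)$ one gets $z^mx=a^{-m}x(z-1)^m$, $zx^m=a^{-m}x^mz-s_mx^m$ with $s_m:=a^{-1}+\cdots+a^{-m}$, and dually $y^mz=a^{-m}zy^m-s_my^m$ and $yz^m=a^{-m}(z-1)^my$; from $zy=ayz+y$ one gets $z^my=y(az+1)^m$; and from $yx=cxy+bz^2+z$ one gets, inductively, $y^mx=c^mxy^m+\rho_m(z)y^{m-1}$, where $\rho_m\in K[z]$ is assembled recursively from $bz^2+z$ through the substitution $z\mapsto a^{-1}(z-1)$. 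I would isolate the fact that $\rho_m\neq0$ for every $m\geq1$ --- its leading coefficient is $b\,c^{m-1}\sum_{j=0}^{m-1}(a^{-2}c^{-1})^{j}$, which for generic $a,c$ (and $b\neq0$) is nonzero --- because the rigidity of the center rests precisely on these ``derivation tails'' being nonzero.

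Now write $f=\sum_ic_ix^{\alpha_i}z^{\beta_i}y^{\gamma_i}$ with $c_i\in K-\{0\}$ and pairwise distinct monomials. Imposing $zf=fz$, the resulting identity reads
\[
\sum_ic_i\bigl(a^{-\alpha_i}-a^{-\gamma_i}\bigr)x^{\alpha_i}z^{\beta_i+1}y^{\gamma_i}=\sum_ic_i\bigl(s_{\alpha_i}-s_{\gamma_i}\bigr)x^{\alpha_i}z^{\beta_i}y^{\gamma_i}.
\]
For any monomial $x^{\alpha}z^{\beta}y^{\gamma}$ of $f$ that is of maximal $z$-degree among the terms of $f$ sharing its pair $(\alpha,\gamma)$, the monomial $x^{\alpha}z^{\beta+1}y^{\gamma}$ receives no contribution on the right-hand side and exactly one on the left, so the corresponding coefficient equation gives $a^{-\alpha}=a^{-\gamma}$; since $(\alpha,\gamma)$ is constant along such a class, this holds for every term of $f$, hence $\alpha_i=\gamma_i$ for all $i$ when $a$ is not a root of unity (and, when $a=1$, the left-hand side vanishes identically and comparing coefficients of the original monomials with $s_m=m$ gives the same conclusion). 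Next, $xf=fx$ together with $y^mx=c^mxy^m+\rho_m(z)y^{m-1}$: the terms of top $x$-degree $M:=\max_i\alpha_i$ receive no contribution from the $\rho$-tails, so matching them yields $c^M=1$, whence $M=0$ and therefore $\alpha_i=\gamma_i=0$ for all $i$, i.e.\ $f\in K[z]$. Finally, one last use of $xf=fx$ on $f=\sum_ic_iz^{\beta_i}$, together with $z^{\beta}x=a^{-\beta}x(z-1)^{\beta}$, gives $\sum_ic_iz^{\beta_i}=\sum_ic_ia^{-\beta_i}(z-1)^{\beta_i}$; comparing top $z$-degrees (or, when $a=1$, using that $p(z)=p(z-1)$ forces $p$ constant in characteristic zero) yields $\beta_i=0$, so $f\in K$ and $Z(A)=K\subseteq Z(A)$.

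The step I expect to be the main obstacle is the control of the derivation tails: after matching each leading coefficient one is always left with genuinely inhomogeneous remainders --- the partial sums $s_m$ in the $z$-step and the polynomials $\rho_m$ in the $x$- and $y$-steps --- and the argument must rule out any accidental cancellation among them. This is exactly where $b\neq0$ (so that the $\rho_m$ are nonzero) and $\mathrm{char}(K)=0$ (so that $A$ is a domain and $p(z)=p(z-1)$ forces constancy) are used, in the same spirit as the Weyl-algebra case; making the top-degree comparisons airtight --- and, if one does not simply assume that $a$ and $c$ are not roots of unity, identifying and disposing of the exceptional parameter values --- is the delicate part.
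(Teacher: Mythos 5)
Your route is genuinely different from the paper's. The paper proves this theorem in two lines: by Theorem \ref{filteredskew} the associated graded ring $Gr(A)$ of the conformal algebra is the quasi-commutative extension with relations $yx=cxy$, $zx=a^{-1}xz$, $zy=ayz$, and since $Gr(Z(A))\subseteq Z(Gr(A))$ and $Z(Gr(A))=K$ by the multiparametric quantum space computation (Lemma \ref{quasi}), one gets $Z(A)=K$. Your direct normal-form computation is self-contained and shows exactly which commutation tails create the rigidity, but as executed it has a concrete gap, and the exceptional parameters you defer cannot in fact be disposed of.

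The gap is in the $xf=fx$ step. Once $\alpha_i=\gamma_i$ is known, the $x$-degree-$(M+1)$ part of $fx$ is $\sum_{\alpha_i=M}c_i\,c^{M}a^{-\beta_i}x^{M+1}(z-1)^{\beta_i}y^{M}$, because pushing $x$ past $z^{\beta_i}$ replaces $z^{\beta_i}$ by $a^{-\beta_i}(z-1)^{\beta_i}$. Matching the top $z$-coefficient therefore yields $c^{M}a^{-B}=1$ (with $B$ the largest $\beta_i$ among the top-$x$-degree terms), not $c^{M}=1$; this forces $M=0$ only when $a$ and $c$ are multiplicatively independent, or after a further analysis of the lower-order terms and the $\rho$-tails that you do not carry out. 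More seriously, the ``exceptional parameter values'' are not a removable technicality: at $a=c=1$ the conformal algebra is a Smith algebra ($[z,y]=y$, $[z,x]=-x$, $[y,x]=bz^2+z$) and possesses a Casimir element $xy+u(z)$, where $u(z)-u(z-1)=bz^2+z$, so its center is $K[xy+u(z)]\neq K$. Hence no amount of top-coefficient bookkeeping can prove the statement for all nonzero $a,b,c$; a genericity hypothesis (essentially that $a$ is not a root of unity and that $c^{i}=a^{j}$ forces $i=j=0$) must be made explicit. The same hypothesis is silently required by the paper's own proof, since $Z(Gr(A))=K$ fails otherwise. Within that generic regime your commutation identities, the $z$-step, and the final reduction of $f\in K[z]$ to $f\in K$ are correct.
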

    \begin{proof}
        Note that $ Z(Gr( \mathfrak{sl}(2,K))) =K$, but $ Gr(Z( \mathfrak{sl}(2,K)))\subseteq Z(Gr( \mathfrak{sl}(2,K))) $, so $ Z( \mathfrak{sl}(2,K))=K $.
    \end{proof}

\subsection{Algebra $ \cD $ of diffusion type}

In \cite{diffusionauthor} were introduced the \textit{diffusion algebras}; following this notion we
define for $n\geq 2$ the algebra $ \cD $ which is generated by $ \{D_i,x_i\mid 1\leq i\leq n \} $
over $ K $ with relations:
 \[ x_ix_j=x_jx_i , \quad x_iD_j=D_jx_i , \quad  1\leq i,j\leq n ;\]
 \[ c_{i,j}D_iD_j-c_{j,i}D_jD_i=x_jD_i-x_iD_j,\quad  i<j ,\]  $c_{i,j},c_{j,i}\in K^* $. Observe
 that
 $ A\cong \sigma(K[x_1,\dots,x_n])\langle D_1,\dots,D_n \rangle $ is a bijective skew $ PBW $ extension of $ K[x_1,\dots,x_n]
 $, and of course we obtain the following trivial result.

    \begin{theorem}
        $ K[x_1,\dots,x_n] \subseteq Z(\cD)$.
    \end{theorem}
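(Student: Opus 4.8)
The plan is to show that each generator $x_k$ of the base ring is central, and then invoke the fact that the center is a subalgebra. First I would recall that, by construction, $\cD\cong\sigma(K[x_1,\dots,x_n])\langle D_1,\dots,D_n\rangle$ is a skew $PBW$ extension of $R:=K[x_1,\dots,x_n]$; in particular $R$ embeds in $\cD$ (it is exactly the degree-zero part $F_0$ of the filtration in Theorem \ref{filteredskew}), and $R$ is generated as a $K$-algebra by $x_1,\dots,x_n$. Since $\cD$ is generated as a $K$-algebra by the finite set $\{x_1,\dots,x_n,D_1,\dots,D_n\}$, an element of $\cD$ is central as soon as it commutes with each of these $2n$ generators.

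Next I would verify the commutation of a fixed $x_k$ with each generator directly from the defining relations of $\cD$: the relation $x_ix_j=x_jx_i$ gives $x_kx_i=x_ix_k$ for every $i$, and the relation $x_iD_j=D_jx_i$ gives $x_kD_j=D_jx_k$ for every $j$. Note that the third family of relations, $c_{i,j}D_iD_j-c_{j,i}D_jD_i=x_jD_i-x_iD_j$, which is the only place the constants $c_{i,j},c_{j,i}$ and the nontrivial part of the extension enter, is not used at all. Hence every $x_k$ lies in $Z(\cD)$.

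Finally, since $Z(\cD)$ is a $K$-subalgebra of $\cD$ containing $K$ and all the $x_k$, it contains the $K$-subalgebra generated by $x_1,\dots,x_n$, which is precisely the base ring $R=K[x_1,\dots,x_n]$; and this subalgebra is genuinely a commutative polynomial ring, because by Definition \ref{gpbwextension} the standard monomials $\mathrm{Mon}(\cD)$ form a free left $R$-basis of $\cD$, so $R$ itself sits inside $\cD$ without any collapse of relations. There is no real obstacle here; the only point requiring a moment's care is this identification of the subalgebra generated by $x_1,\dots,x_n$ with the polynomial ring $K[x_1,\dots,x_n]$, which is immediate from the skew $PBW$ structure of $\cD$.
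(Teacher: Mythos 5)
Your proof is correct and is exactly the argument the paper has in mind: the paper states this result without proof, calling it trivial precisely because the defining relations $x_ix_j=x_jx_i$ and $x_iD_j=D_jx_i$ make each $x_k$ commute with all $2n$ generators of $\cD$, so that $Z(\cD)$ contains the subalgebra they generate, which the free left $R$-module structure identifies with $K[x_1,\dots,x_n]$. Nothing is missing.
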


\section{Summary and applications}

\subsection{Tables}
In this subsection we summarize in tables the description of the center of some remarkable examples
of skew $ PBW $ extensions
studied in the previous sections. Recall that $K$ is a field with $char(K)=0$.\\
\begin{table}[H]\label{table4.1}
    \centering \tiny{
        \begin{tabular}{|l|c|}\hline 
            \textbf{Algebra} & $Z(A)$
            \\ \hline
            \cline{1-2} Weyl algebra $A_n(K)$ & $K$   \\ \cline{1-2}
            Extended Weyl algebra $B_n(K)$ & $K$  \\
            \cline{1-2} Universal enveloping algebra of $ \mathfrak{sl}(2,K) $ & $ K[4xy+z^{2}-2z] $\\
            \cline{1-2} Universal enveloping algebra of $S\mathfrak{o}(3,K)$ & $ K[x^2+y^2+z^2] $\\
            \cline{1-2} Tensor product $R\otimes_K \cU(\cG)$ &  $ Z(R)\otimes_K Z(\cU(\cG)) $  \\
            \cline{1-2} 
            \cline{1-2} Algebra of q-differential operators $D_{q,h}[x,y]$
            & $K$  \\
            \cline{1-2} Algebra of shift operators $S_h$ & $K$   \\
            \cline{1-2}
            Mixed algebra $D_h$ & $K$   \\
            \cline{1-2} Discrete linear systems
            $K[t_1,\dotsc,t_n][x_1,\sigma_1]\dotsb[x_n;\sigma_n]$ & $K$
            \\
            \cline{1-2} Linear partial shift operators
            $K[t_1,\dotsc,t_n][E_1,\dotsc,E_n]$ & $K$   \\
            \cline{1-2} 
            \cline{1-2} L.P. Differential operators
            $K[t_1,\dotsc,t_n][\partial_1,\dotsc,\partial_n]$ & $K$   \\
            \cline{1-2} 
            \cline{1-2} L. P. Difference operators
            $K[t_1,\dotsc,t_n][\Delta_1,\dotsc,\Delta_n]$ & $K$   \\
            \cline{1-2} 
            \cline{1-2} L. P. $q$-dilation operators
            $K[t_1,\dotsc,t_n][H_1^{(q)},\dotsc,H_m^{(q)}]$ & $K$   \\
            \cline{1-2} 
            \cline{1-2} L. P. $q$-differential operators
            $K[t_1,\dotsc,t_n][D_1^{(q)},\dotsc,D_m^{(q)}]$ & $K$   \\
            \cline{1-2} 
            \cline{1-2} 
            Additive analogue of the Weyl algebra $A_n(q_1,\dotsc,q_n)$ & $K$
            \\
            \cline{1-2} Multiplicative analogue of the Weyl algebra $\cO_n(\lambda_{ji})$ & $K$   \\ \cline{1-2}
            $ n- $multiparametric quantum space & $ K $\\ \hline
            Quantum algebra $\cU'(\mathfrak{so}(3,K))$ & $K[-q^{1/2}(q^{2}-1)I_{1}I_{2}I_{3}+q^{2}I_{1}^{2}+I_{2}^{2}+q^{2}I^{2}_{3}]$  \\
            \cline{1-2} Woronowicz algebra $\cW_{\nu}(\mathfrak{sl}(2,K))$ &
            $K$   \\
            \cline{1-2} Algebra \textbf{U} &  $K$  \\
            \cline{1-2} Quantum enveloping algebra of
            $\mathfrak{sl}(2,K)$, $\cU_q(\mathfrak{sl}(2,K))$ &  $ K[(q^2-1)^2EF+qK_e+q^3K_f] $  \\ \cline{1-2}
            Differential operators on a quantum space $S_{\textbf{q}}$,
            $D_{\textbf{q}}(S_{\textbf{q}})$   & $ K $  \\ \cline{1-2}
            Particular Witten's Deformation of $\cU(\mathfrak{sl}(2,K)$ & $K $
            \\
            \cline{1-2} Quantum Weyl algebra of Maltsiniotis
            $A_n^{\textbf{q},\lambda}$ &  $K$ \\
            \cline{1-2}
            Multiparameter Weyl algebra $A_n^{Q,\Gamma}(K)$ & $K$\\ \cline{1-2}
            Quantum symplectic space $\cO_q(\mathfrak{sp}(K^{2n}))$ & $K$ \\
            \cline{1-2}
            Jordan plane $ \mathcal{J} $ & $ K $\\ \cline{1-2}
            Quantum plane & $ K $\\ \cline{1-2}
            Quadratic algebras in 3 variables, $ \mathcal{A}_2 $ & $K$   \\
            \cline{1-2}
    \end{tabular}}
    \caption{Center of some bijective skew $PBW$
        extensions which parameters $ q $'s are not roots of unity.}\label{table4.1}
\end{table}

\begin{table}[H]\label{table4.2}
    \centering \tiny{
        \begin{tabular}{|l|c|}\hline 
            \textbf{Algebra} & $Z(A)$
            \\ \hline
            \hline
            Quantum plane & $ K[x^n,y^n] $\\
            \hline 
            Algebra of q-differential operators $D_{q,h}[x,y]$  & $K[x^{n},y^{n}]$  \\
            \hline
            L. P. $q$-dilation operators $K[t_1,\dotsc,t_n][H_1^{(q)},\dotsc,H_m^{(q)}]$ & $K[t_1^{l},\dotsc,t_n^{l},H^{l}_1,\dotsc,H^{l}_m]$   \\
            \hline
            L. P. $q$-differential operators    $K[t_1,\dotsc,t_n][D_1^{(q)},\dotsc,D_m^{(q)}]$ & $K[t^{l}_1,\dotsc,t^{l}_n,D_1^{l},\dotsc,D_m^{l}]$   \\
            \hline
            Additive analogue of the Weyl algebra $A_n(q_1,\dotsc,q_n)$ & $K[x_{1}^{\alpha_{1}},y_{1}^{\alpha_{1}},\dotsc,x_{n}^{\alpha_{n}},y_{n}^{\alpha_{n}}]$
            \\
            \cline{1-2} Multiplicative analogue of the Weyl algebra $\cO_n(\lambda_{ji})$ & $K[x_{1}^{L_{1}},\dots,x_{n}^{L_{n}}]$   \\ \cline{1-2}
            $ n- $multiparametric quantum space & $K[x_{1}^{L_{1}},\dots,x_{n}^{L_{n}}]$   \\ \cline{1-2}
            Quantum algebra $\cU'(\mathfrak{so}(3,K))$ & $K[-q^{1/2}(q^{2}-1)I_{1}I_{2}I_{3}+q^{2}I_{1}^{2}+I_{2}^{2}+q^{2}I^{2}_{3},C_{n}^{(1)},C_{n}^{(2)},C_{n}^{(3)}]$  \\
            \cline{1-2} Quantum enveloping algebra of
            $\mathfrak{sl}(2,K)$, $\cU_q(\mathfrak{sl}(2,K))$ &  $ K[(q^2-1)^2EF+qK_e+q^3K_f,E^n,F^n,K_e^n,K_f^n] $  \\ \hline
            Quantum symplectic space $\cO_q(\mathfrak{sp}(K^{2n}))$ &  $ K[x_1^m,\dots,x_{2n}^m] $\\
            \hline
    \end{tabular}}
    \caption{Center of some bijective skew $PBW$
        extensions which parameters $ q $'s are  roots of unity.}\label{table4.2}
\end{table}

\begin{table}[H]\label{table4.3}
    \centering \tiny{
        \begin{tabular}{|l|c|}\hline 
            \textbf{Algebra} & $\subseteq Z(A)$
            \\ \hline
            \cline{1-2} Algebra $\cD$ of diffusion type &     $ K[x_1,\cdots,x_n] $ \\ \cline{1-2}Dispin algebra $\cU(osp(1,2))$ & $\mathbb{K}[4x^{2}z^{2}-y^{2}-2xz-y]$   \\
            \cline{1-2} Algebra \textbf{U} ($q^l =1 $)&  $K[x_1^l,\cdots,x_n^l]$  \\
            \cline{1-2} Coordinate algebra of the quantum group
            $SL_q(2)$ & $K[xy-quv]$  \\
            \cline{1-2} Coordinate algebra of the quantum group $SL_q(2)$, $ q^l=1 $ & $K[xy-quv,x^{l},y^{l},u^{i}v^{j}]$, with $ i+j=l $  \\
            \cline{1-2} $q$-Heisenberg algebra \textbf{H}$_n(q)$ & $K[C_{i}], C_{i}=(q^{2}-1)x_{i}y_{i}z_{i}-y_{i}^{2}$   \\
            \cline{1-2} $q$-Heisenberg algebra \textbf{H}$_n(q)$, $ q^l=1 $ & $K[x_{i}^{l},y^{l}_{i},z_{i}^{l},C_{i}]$   \\
            \cline{1-2}Quadratic algebras in 3 variables, $ \mathcal{A}_3 $ & $K[\xi_2y-a_2z]$   \\
            \cline{1-2}
    \end{tabular}}
    \caption{Central subalgebras of some bijective skew $PBW$
        extensions.}\label{table4.3}
\end{table}

\subsection{Application to the Zariski cancellation problem}

Now we apply some of the previous results to the Zariski cancellation problem. This is an open
problem of affine algebraic geometry that has recently been formulated and studied by Bell and
Zhang (\cite{BellZhang}) for noncommutative algebras (see also \cite{Zhangetal}, \cite{Zhangetal2},
\cite{Zhangetal3}, \cite{Zhangetal4}, \cite{Wang}).

\begin{definition}
    Let $ A $ be a $ K- $algebra.
    \begin{enumerate}
        \item $ A $ is cancellative if $ A[t]\cong B[t] $ for some $ K- $algebra $ B $ implies that $ A\cong B $.
        \item $ A $ is strongly cancellative if, for any $ d\geq 1 $, the isomorphism $ A[t_1,\cdots,t_d]\cong B[t_1,\cdots,t_d] $ for some $ K- $algebra $ B $ implies that $ A\cong B $.
        \item $ A $ is universally cancellative if, for any finitely generated commutative $ K- $algebra and a domain $ R $ such that $ R/I=K $ for some ideal $ I\subset R $ and any $ K- $algebra $ B $, $ A\otimes R\cong B\otimes R  $ implies that $ A\cong B $.
    \end{enumerate}
\end{definition}

It is clear that universally cancellative implies strongly cancellative, and in turn, strongly
cancellative implies cancellative.
\begin{proposition}
    Let $ K $ be a field and $ A $ be an algebra with $ Z(A)=K $. Then $ A $ is universally cancellative.
\end{proposition}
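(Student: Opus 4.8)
The plan is to reduce everything to two facts: that the center of a tensor product splits off the commutative factor, and that a domain which is algebraic over a field is itself a field. So suppose $A\otimes_K R\cong B\otimes_K R$ as $K$-algebras, where $R$ is a finitely generated commutative $K$-algebra which is a domain and $R/I=K$ for some ideal $I$; fix such an isomorphism $\psi$. First I would record the elementary identity $Z(C\otimes_K R)=Z(C)\otimes_K R$, valid for every $K$-algebra $C$ and every \emph{commutative} $K$-algebra $R$: the inclusion $\supseteq$ holds because $R$ is commutative, and for $\subseteq$ one writes a central element as $\sum_i c_i\otimes r_i$ with the $r_i$ linearly independent over $K$ and uses that $R$ is free over the field $K$ to conclude $ac_i=c_ia$ for every $a\in C$, hence $c_i\in Z(C)$. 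Applying this with $C=A$ and using $Z(A)=K$ gives $Z(A\otimes_K R)=K\otimes_K R=R$.

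Since $\psi$ carries center onto center, $R\cong Z(B\otimes_K R)=Z(B)\otimes_K R$. Put $S:=Z(B)$. The map $s\mapsto s\otimes 1$ embeds $S$ into $S\otimes_K R$ (as $R$ is $K$-free), and composing with the isomorphism onto $R$ exhibits $S$ as a $K$-subalgebra of the affine domain $R$; in particular $S$ is a domain and $\mathrm{trdeg}_K S\le \mathrm{trdeg}_K R<\infty$. Since $S\otimes_K R\cong R$ is a domain we have $\mathrm{trdeg}_K(S\otimes_K R)=\mathrm{trdeg}_K S+\mathrm{trdeg}_K R$, while also $\mathrm{trdeg}_K(S\otimes_K R)=\mathrm{trdeg}_K R$, so $\mathrm{trdeg}_K S=0$. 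But a domain all of whose elements are algebraic over $K$ is a field (for $0\ne s\in S$, $K[s]$ is a finite-dimensional domain, hence a field, so $s^{-1}\in K[s]\subseteq S$), so $S$ is a field algebraic over $K$. Composing the embedding $S\hookrightarrow R$ with the augmentation $R\to R/I=K$ gives a $K$-algebra homomorphism from the field $S$ to $K$; it is injective and restricts to the identity on $K$, hence an isomorphism, so $Z(B)=S=K$.

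Now both $A\otimes_K R$ and $B\otimes_K R$ have center equal to $R$, and $\psi$ restricts to a $K$-algebra automorphism $\theta$ of $R$. The two-sided ideal of $A\otimes_K R$ generated by $I$ (a set of central elements) equals $A\otimes_K I$, and $(A\otimes_K R)/(A\otimes_K I)\cong A\otimes_K(R/I)=A$. Under $\psi$ this ideal is carried to the ideal of $B\otimes_K R$ generated by $\theta(I)$, that is, to $B\otimes_K\theta(I)$; and since $\theta$ is an automorphism, $R/\theta(I)\cong R/I=K$, so $(B\otimes_K R)/(B\otimes_K\theta(I))\cong B\otimes_K(R/\theta(I))\cong B$. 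Therefore $\psi$ descends to a $K$-algebra isomorphism $A\cong B$, which is exactly the assertion that $A$ is universally cancellative.

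The step I expect to be the main obstacle is the middle paragraph --- deducing $Z(B)=K$ from $Z(B)\otimes_K R\cong R$ --- where one must be careful that the domain hypothesis on $R$ really does force $S\otimes_K R$ to be a domain (here automatic, as it is isomorphic to $R$) so that the transcendence-degree bookkeeping is legitimate, and then invoke that an algebraic domain over a field is a field. Once the identity $Z(C\otimes_K R)=Z(C)\otimes_K R$ and the conclusion $Z(B)=K$ are in place, the passage from $\psi$ to an isomorphism $A\cong B$ is a routine specialization at the $K$-point $I$ of $\operatorname{Spec} R$.
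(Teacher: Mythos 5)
Your proof is correct. The paper does not actually prove this proposition --- it simply cites Bell--Zhang, Proposition 1.3 --- and your argument is in substance a self-contained reconstruction of that proof: compute $Z(A\otimes_K R)=R$, transport via $\psi$ to get $Z(B)\otimes_K R\cong R$, force $Z(B)=K$ by a dimension count plus the augmentation $R\to R/I=K$, and then specialize $\psi$ at $I$. The only cosmetic difference from the cited source is that you run the dimension count with transcendence degree where Bell--Zhang phrase it via GK-dimension; both correctly yield that $Z(B)$ is a domain algebraic over $K$, hence a field, hence $K$.
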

\begin{proof}
    See \cite{BellZhang}, Proposition 1.3.
\end{proof}
According to the conditions presented in the Table \ref{table4.1} and the previous proposition, we have the following Corollary.
\begin{corollary}\label{zariskipbw}
    The following skew $ PBW $ extensions are universally cancellative, and hence, cancellative:
    \begin{enumerate}
        \item  Weyl algebra $A_n(K)$.
        \item Extended Weyl algebra $B_n(K)$.
        \item Algebra of q-differential operators $D_{q,h}[x,y]$.
        \item  Algebra of shift operators $S_h$.
        \item Mixed algebra $D_h$.
        \item Discrete linear systems $K[t_1,\dotsc,t_n][x_1,\sigma_1]\dotsb[x_n;\sigma_n]$.
        \item Linear partial shift operators $K[t_1,\dotsc,t_n][E_1,\dotsc,E_n]$ .
        \item L.P. Differential operators $K[t_1,\dotsc,t_n][\partial_1,\dotsc,\partial_n]$.
        \item L. P. Difference operators    $K[t_1,\dotsc,t_n][\Delta_1,\dotsc,\Delta_n]$.
        \item L. P. $q$-dilation operators $K[t_1,\dotsc,t_n][H_1^{(q)},\dotsc,H_n^{(q)}]$.
        \item L. P. $q$-differential operators $K[t_1,\dotsc,t_n][D_1^{(q)},\dotsc,D_n^{(q)}]$.
        \item Additive analogue of the Weyl algebra $A_n(q_1,\dotsc,q_n)$.
        \item Multiplicative analogue of the Weyl algebra$\cO_n(\lambda_{ji})$.
        \item $ n- $multiparametric quantum space.
        \item Woronowicz algebra $\cW_{\nu}(\mathfrak{sl}(2,K))$.
        \item  Algebra \textbf{U}.
        \item Differential operators on a quantum space $S_{\textbf{q}}$.
        $D_{\textbf{q}}(S_{\textbf{q}})$.
        \item Particular Witten's Deformation of $\cU(\mathfrak{sl}(2,K)$.
        \item Quantum Weyl algebra of Maltsiniotis.
        $A_n^{\textbf{q},\lambda}$.
        \item Multiparameter Weyl algebra $A_n^{Q,\Gamma}(K)$.
        \item Quantum symplectic space $\cO_q(\mathfrak{sp}(K^{2n}))$.
        \item Jordan plane $ \mathcal{J} $.
        \item Quantum plane.
        \item Quadratic algebras in 3 variables, $ \mathcal{A}_2 $.
    \end{enumerate}
\end{corollary}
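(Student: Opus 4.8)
The plan is to invoke the preceding Proposition entry by entry. That Proposition says that any $K$-algebra $A$ with $Z(A)=K$ is universally cancellative, and right after the definition it was observed that universal cancellativity implies strong cancellativity, which implies cancellativity. Hence it suffices to check that $Z(A)=K$ for each of the algebras listed, and all of these centers have already been recorded in Table \ref{table4.1}.

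Concretely, I would sort the list by the source of the center computation. For the classical and previously-known cases collected in Section \ref{examples} --- the Weyl algebra $A_n(K)$, the extended Weyl algebra $B_n(K)$, the algebra of $q$-differential operators $D_{q,h}[x,y]$, the shift algebra $S_h$, the mixed algebra $D_h$, the Maltsiniotis quantum Weyl algebra $A_n^{\textbf{q},\lambda}$, the multiparameter Weyl algebra $A_n^{Q,\Gamma}(K)$, the quantum symplectic space $\cO_q(\mathfrak{sp}(K^{2n}))$, the Jordan plane $\mathcal{J}$, and the quantum plane --- the equality $Z(A)=K$ is either classical or quoted from \cite{Dixmier}, \cite{Tang}, \cite{Hechun}, \cite{Shirikov}, \cite{Zhangetal3} under the standing hypothesis (kept throughout Table \ref{table4.1}) that the defining parameters $q$ are not roots of unity. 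For the operator algebras (discrete linear systems, linear partial shift, difference, differential, $q$-dilation and $q$-differential operators) together with the additive analogue of the Weyl algebra $A_n(q_1,\dots,q_n)$, the triviality of the center is exactly Theorem \ref{operator}(1), since in each of these cases either every $q_i=1$ with $d_i\neq 0$ or $a_i\neq 0$, or none of the $q_i$ is a root of unity. For the $n$-multiparametric quantum space and the multiplicative analogue of the Weyl algebra $\cO_n(\lambda_{ji})$, the statement $Z(A)=K$ is Lemma \ref{quasi} (equivalently the theorem immediately following it). Finally, for the Woronowicz algebra $\cW_{\nu}(\mathfrak{sl}(2,K))$, the algebra \textbf{U}, the ring of differential operators on a quantum space $D_{\textbf{q}}(S_{\textbf{q}})$, the particular (conformal) Witten deformation of $\cU(\mathfrak{sl}(2,K))$, and the quadratic algebra $\mathcal{A}_2$, the triviality of the center is the content of the corresponding theorems of Section 3.

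Once $Z(A)=K$ has been collected in every case, applying the Proposition gives that each of these algebras is universally cancellative, and the two implications noted after the definition then yield strong cancellativity and cancellativity. There is essentially no obstacle in this final step: the mathematical weight of the corollary is entirely carried by the center computations of Sections 1--3 and by Proposition 1.3 of \cite{BellZhang}. The only points to keep in mind are that each listed algebra is genuinely a $K$-algebra --- which holds by Remark \ref{remark1.7} for the skew $PBW$ extensions of $K$ and is immediate for the rest --- and that, for the parameter-dependent examples, one stays in the ``generic'' regime where the relevant parameters are not roots of unity, precisely as stipulated in Table \ref{table4.1}.
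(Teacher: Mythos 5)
Your proposal is correct and follows exactly the paper's own argument: the paper proves this corollary simply by combining the entries of Table \ref{table4.1} with trivial center $K$ and the preceding proposition (Proposition 1.3 of \cite{BellZhang}), just as you do. Your additional bookkeeping of where each center computation comes from is a harmless elaboration of the same route.
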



\end{document}